\newcommand{\mbf}{\mathbb{F}}
\newcommand{\mbg}{\mathbb{G}}
\newcommand{\mby}{\mathbb{Y}}
\newtheorem{theorem}{Theorem}
\newtheorem{assumption}[theorem]{Assumption}
\newtheorem{corollary}{Corollary}
\newtheorem{definition}{Definition}
\newtheorem{example}{Example}
\newtheorem{lemma}{Lemma}
\newtheorem{proposition}{Proposition}
\newtheorem{remark}[theorem]{Remark}
\newcommand{\bee}{\begin{equation}}
\newcommand{\eee}{\end{equation}}
\begin{document}

\date{\today }

\title{Expansion of a Filtration with a Stochastic Process: The Information Drift}

\author{L\'eo Neufcourt\thanks{Department of Statistics, Michigan State University, East Lansing, Michigan. email: leo.neufcourt@gmail.com} and Philip Protter\thanks{Statistics Department, Columbia University, New York, NY 10027; supported
in part by NSF grant DMS-1612758. email: pep2117@columbia.edu.}}
\maketitle
\begin{abstract}
When expanding a filtration with a stochastic process it is easily possible for a semimartingale no longer to remain a semimartingale in the enlarged filtration. Y. Kchia and P. Protter indicated a way to avoid this pitfall in 2015, but they were unable to give the semimartingale decomposition in the enlarged filtration except for special cases. We provide a way to compute such a decomposition, and moreover we provide a sufficient condition for It\^o processes to remain It\^o processes in the enlarged filtration. This has significance in applications to Mathematical Finance. 

\end{abstract}

\section*{Introduction}

The expansion of filtrations and its consequences on stochastic processes have been a subject of interest since the early papers of It\^o~\cite{Ito} and M. Barlow~\cite{Barlow}, both of which appeared in 1978. In the past there have been two basic techniques. One is known as initial enlargement and the other is known as progressive enlargement.  These techniques are well known and we do not review them here. The interested reader can consult the recent book (2017) of A. Aksamit and M. Jeanblanc~\cite{AJ}.

A third type of filtration enlargement was recently proposed by Y. Kchia and P. Protter~\cite{KP}, published in 2015. This approach involves the continuous expansion of a filtration via information arriving from a continuous time process. It is a complicated procedure, and the authors were able to show when the semimartingale property was preserved, but in general they were not able to describe how its decomposition into a local martingale and a finite variation process occurred, except for some special cases. In particular, as explicit examples showed, even if the original semimartingale was an It\^o process, the expanded process, even when still a semimartingale, need no longer be an It\^o process. 
This has significant consequences in the theory of mathematical finance, for example. 

In this article we give a sufficient condition such that not only does the semimartingale remain a semimartingale under the filtration expansion, but it also remains an It\^o process (ie, the sum of a stochastic integral with respect to a Brownian motion and a finite variation process whose paths are absolutely continuous a.s.). If we write the It\^o process in the form
\bee\label{e1}
X_t=X_0+\int_0^tH_sdB_s+\int_0^t\alpha_sds
\eee
then we refer to the integrand $(\alpha_s)_{s\geq 0}$ as the \emph{information drift}. 

In Section~\ref{motivation} we provide examples of possible applications to three different areas coming from operations research, physics, and mathematical finance. In Section~\ref{sec-exp-fil-id} we give the technical necessities, and define what we mean by \emph{the information drift} of a semimartingale. One of the tools that underlies our approach is that of the weak convergence of $\sigma$ algebras and (especially) filtrations; this is treated in Section~\ref{sec-cv-fil}. In Section~\ref{sec-exp-process} we treat the main subject of this article, that of the expansion of a filtration via the dynamic inclusion of the information of another stochastic process. This section contains what we believe is this paper's main contribution, namely Theorem~\ref{thm-Tprocess}.

\section{Motivation}\label{motivation}

It has been argued that expansions of a filtration
with a general stochastic process 
are not really more appropriate than successive initial expansions
 to describe informational asymmetry in concrete applications.
While the discrete nature of real-world data is undeniable, 
e.g. in the context of high-frequency trading,
the continuous limit is nevertheless often the key to understand 
discrete high-frequency behaviors.

\subsection{Truck Freight Routing}

Our first potential application comes from the modeling of trucking freight. One of the largest companies in the U.S. for freight moved by truck is the Yellow Freight Company, known as YRC Freight. Originally it began in 1924 in Oklahoma City, Oklahoma, but has long since become a national company. A map of its distribution routings (as a snapshot in time) is in Figure 1. The basic problems related to a national truck routing framework can be summarized as follows~\cite{Huseyin}:

\begin{figure}[h]
\begin{center}
\includegraphics[scale=0.65]{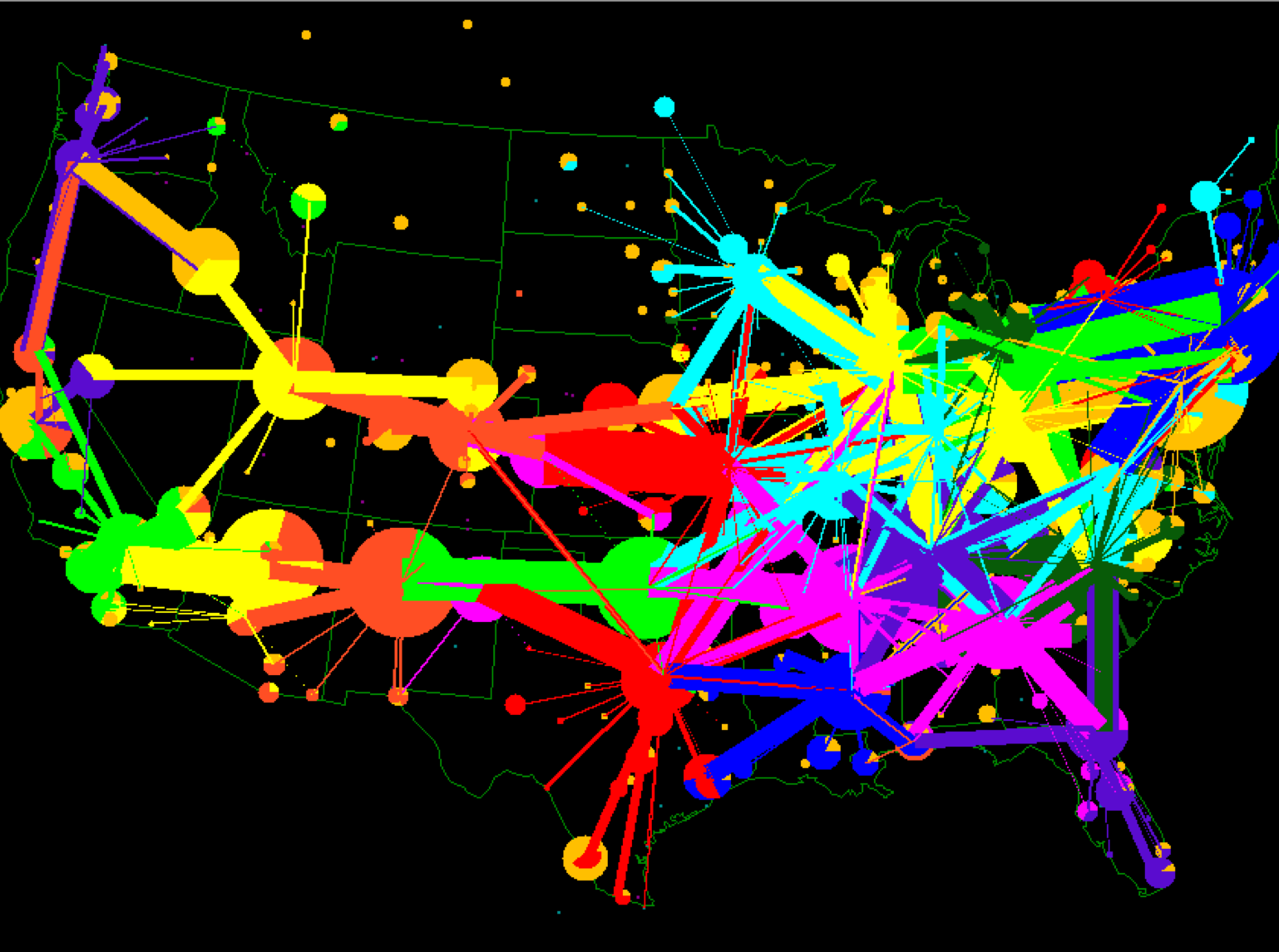}
\end{center}
\caption{Yellow Freight Route Map, courtesy of H. Topaloglu}
\label{Yellow}
\end{figure}

\begin{itemize}
\item We have a set of indivisible and reusable resources
\item Tasks arrive over time according to a probability distribution
\item At each decision epoch, we have to decide what tasks to cover
\item Resources generally have complex characteristics determining the feasibility of assigning a resource to a task
\item We are interested in maximizing efficiency over a finite time horizon
\end{itemize}

When making resource allocation decisions, one wants first to secure a ``good" first period profit and also make sure that the allocation of trucks is going to be favorable in the second time period. Using dynamic programming, one can construct a function that gives the expected worth of a truck at a certain location at a certain time period. See for example the expository article of Powell and Topaloglu~\cite{PT}. As the trucks move, however, information is constantly received by management via GPS Tracking as well as communication from both the drivers and customers. It is reasonable to model this information flow as continuous. If the underlying filtration is generated by truck positions and customer orders according to each ``decision epoch,'' then this extra information, corrupted perhaps by noise, can be modeled as a continuous enlargement of the underlying filtration. 

The goal is to maximize the sum of the immediate profits, as well as the expected worth of a truck in the next time period. In this way, the functions estimate the impact of a decision in the current time period on the future time periods. Once the system is translated into profits in this way, we can construct a mathematical model along the lines of
\bee\label{e1}
X_t=M_t+A_t\text{ is a semimartingale modeling the profits at time }t\geq 0
\eee
where the 	``profits" include both profits and losses. A loss can occur from penalties for a load being delivered late, or from - for example - a road accident delaying the load, and injuring the truck and/or (more importantly) the driver. Accidents are frequent, as truck drivers are pushed to their endurance limits, and can fall asleep at the wheel while driving. The local martingale $M$ could be, for example, noise in the system, due (for example) to faulty reporting, failures of the GPS system, or sabotage, as well as load theft. Given  Central Limit Theorem arguments, it is reasonable to model such noise as Brownian motion.

The finite variation term of~\eqref{e1} can be modeled as $\int_0^th_sds$, denoting the cumulative profits (or losses) from time $0$ to time $t>0$. If one accepts this model, then we are in a framework where we can consider filtration enlargement along the lines of this paper. In this case the filtration enlargement process would come from information entering into the system as the semimartingale $X$ of~\eqref{e1} evolves with time. Such information can take the form of order updates or cancelations, unforeseen supply chain issues, truck breakdowns and unreliable drivers, and the like. 

\subsection{High Energy Particle Collisions}

We believe these mathematical models can be applied to help to understand the behavior of heavy ion collisions. Michigan State University is building a new Facility for Rare Isotope Beams. High energy particles will be destroyed, allowing physicists to follow the reactions, by observing the light fragments ($\pi$, p, n, d, He) that are emitted. One then hopes to infer the detailed structure of the nuclei. This is related to nuclear liquid-gas phase transition. 

These collisions typically behave as follows, according to the theory presented in the highly regarded paper~\cite{SG}: When two nuclei collide at high energy, a strong non-linear shock wave is formed. High density, pressure and temperature are formed in the region of the collision. The system expands due to its large internal pressure until the collisions between the particles cease to occur. Then the hydrodynamic description loses its validity and this is when one sees the light fragments. The light fragments are the data, and the rest is (accepted, but) speculative theory. Fragment formation is a topic of ``great current interest" in connection with the discovery of the nuclear liquid-gas phase transition. 

Were we to model the behavior of the particles as systems of stochastic differential equations (for example), then we would want to enlarge the underlying filtration with the evolving entropy and the viscous effects, which the physical theories show present a serious effect on the evolution of the particles. This would represent our filtration enlargement and subsequent change in the evolution of the particles. 

\subsection{Insider Trading}

A model of a specialized kind of insider trading is the original motivation for our study. Note that the legality of insider trading is a complicated subject filled with nuance. There are types of trades which are (at least currently) perfectly legal but are most accurately described as insider trading. A dramatic example is provided by the high frequency traders known as co-locators. Our analysis is inspired by the work of~\cite{KP}. These companies place their trading machines next to the computers that process the trades of the stock exchange. They rent these co-locations directly from the stock exchanges themselves, and the fiber optics are carefully measured so that no one co-locator has a physical advantage over another. Trades take place at intervals of 0.007 seconds. These co-locators use various specialized orders (such as ``immediate or cancel" orders) effectively to ``see" the limit order book in the immediate future, and thus determine if a given stock is likely to go up or to go down in the immediate future. That this can be done has been recently described and illustrated in preliminary work of Neufcourt, Protter, and Wang~\cite{NPW}.  The techniques of this paper can be used to model this very phenomenon. A beginning of how to do this was suggested in~\cite{KP}. Here we develop the necessary mathematics to carry this program forward. 

We could model the stock price by any of a variety of standard models (for example, stochastic volatility models within a Heston paradigm), but include extra information from the structure of the limit order book, which evolves through time, available only to the co-locators. The larger filtration would then affect the semimartingale decompositions of the Heston models, giving a new model for the insiders. This would mean the insiders would have a new collection of risk neutral measures, \emph{a priori} different from those of the market in general. This would allow them to determine if financial derivatives (for example call options) are fairly priced or not. They could then leverage their inside information given this knowledge. 

\subsection*{Acknowledgement}
The authors wish to thanks Professor Monique Jeanblanc for several helpful comments upon her reading of a preliminary draft of this paper.  The authors also wish to thank H. Topaloglu for allowing us to use his diagram of the Yellow Freight Trucking Routes within the U.S.

\section{Expansions of filtrations, semimartingales and information drift} \label{sec-exp-fil-id}

\subsection{Preliminaries} \label{sub-prelim}

We suppose given a complete probability space $(\Omega,\mathcal{A},\mathbb{P})$, 
a time interval $I\subset[0,\infty)$ containing zero, a filtration $\mbf:=(\mathcal{F}_t)_{t\in I}$ 
satisfying the usual hypotheses, and we assume implicitly that random variables and stochastic processes take values
in a Polish space $(E,\mathcal{E})$. We consider a continuous  $\mbf$-semimartingale $S$, 
i.e. a stochastic process admitting a decomposition 

\begin{equation*}
S=S_0+M+A \label{eq-SMD}
\end{equation*}

\noindent
where $M$ is a continuous $\mbf$-local martingale and $A$ a continuous $\mbf$-adapted process of finite variation on compact intervals (\emph{finite variation process}), both starting at zero%
\footnote{We recall that semimartingales can equivalently be defined as the class of ``natural'' stochastic integrators 
\emph{via} the Bitcheler-Dellacherie characterization, and include most processes commonly considered 
such as solutions of Markov or non Markov SDEs (see \cite{PPbook}).}.
That is, we assume $M_0=A_0=0$ a.s.

\begin{assumption}\label{a1}
All semimartingales in this article are assumed to be continuous unless otherwise explicitly noted.
\end{assumption}

\begin{definition}\label{d1} An expansion (or augmentation) of the filtration $\mbf$ is another filtration $\mbg$ such that $\mathcal{F}_t\subset\mathcal{G}_t$ for every $t\in I$.
\end{definition}

Given an $\mbf$-semimartingale $S$ with decomposition 
$S=S_0+M+A$
and any expansion $\mbg$ of $\mbf$,
 it is straightforward that $A$ is also a $\mbg$-finite variation process.
However $M$ need not in general be a $\mbg$-local martingale (see It{\^o}'s founding example \cite{Ito}), 
nor even a $\mbg$-semimartingale 
(one can consider for instance a Brownian motion and the constant filtration generated by its whole path). 

We will restrict our focus, therefore, to the properties of a $\mbf$-local martingale $M$ with respect to an expansion $\mbg$.
When $M$ is a $\mbg$-semimartingale we wish to know its decomposition into the sum of a $\mbg$ local martingale $\widetilde{M}$ and a $\mbg$ adapted finite variation process $\widetilde{A}$ 
(see \cite[Chapter VI,Theorem 4]{PPbook}). 
Let us state in passing if $M$ is an $\mbf$-Brownian motion then 
$\widetilde{M}$ is a continuous $\mbg$-local martingale
with quadratic variation $[\widetilde{M}, \widetilde{M}]_t=t$ hence it's a $\mbg$ Brownian motion by the L\'evy characterization.


In order to perform tractable computations it is often desirable to consider expansion models where
$M$ is a $\mbg$-semimartingale, and has a $\mbg$-decomposition of the form 
$$M=\widetilde{M}+\int_0^.\alpha_sd[M,M]_s,$$
for some $\mbg$-adapted measurable process $\alpha$. 
Such a decomposition is unique due to the assumption of the continuity of the paths. 
Proposition 1.5 in \cite{ADI} establishes that the process $\alpha$ can be taken to be $\mathcal{G}$-predictable,
in which case uniqueness actually occurs up to indistinguishibility, 
which justifies our introduction of $\alpha$ in the next section as 
\emph{the information drift} of the expansion $\mbg$. \\

\noindent
Given a filtration $\mby$ under which $M$ is a semimartingale we will denote 
$\mathscr{S}(\mby, M)$ the set of $\mby$-predictable and $M$-integrable
stochastic processes, as well as $\mathscr{H}^1(\mby, M)$ and $\mathscr{H}^2(\mby, M)$ the 
subsets containing the processes $H$ satisfying respectively $\mathbb{E}\int_I|\alpha_s|d[M,M]_s<\infty$ and 
$\mathbb{E}\int_I\alpha_s^2d[M,M]_s < \infty$. When there is no ambiguity we will omit the dependency on $M$ 
in these notations.\\

We begin with a result that is doubtless well known to experts but perhaps is prudent to include for the sake of completeness. 

\begin{proposition} \label{prop-usual-hyp} 
Let $\check{\mby}$ be a filtration,
{$\mathcal{N}$ the set of 
$\mathbb{P}$-null sets in $\mathcal{A}$,}
and $\mby$ the right-continuous filtration defined by 
$\mby_t := \bigcap_{u>t}(\check{\mby}_u\vee\mathcal{N})$
\begin{enumerate}[1.]
\item Every $\check{\mby}$-semimartingale $S$ with decomposition $S=S_0+M+A$ is also a 
${\mby}$-semimartingale with the same decomposition $S_0 + M+A$.
\item Every continuous and $\check{\mby}$-adapted ${\mby}$-semimartingale with decomposition $S_0+M+A$ is 
also a $\check{\mby}$-semimartingale with decomposition 
$S_0+M+A$ (for suitable modifications of $M$ and $A$).
\end{enumerate}
\end{proposition}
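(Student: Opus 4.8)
The plan is to treat the two assertions separately, the first by a direct martingale argument and the second by Stricker's theorem together with the first. For the first assertion, the finite-variation term needs no work: $A$ is $\check{\mby}$-adapted, hence $\mby$-adapted, and having paths of finite variation is a pathwise property, so $A$ is a $\mby$-finite variation process. The real content is that a continuous $\check{\mby}$-local martingale $M$ remains a $\mby$-local martingale, and I would establish this in two steps after reducing $M$ by the $\check{\mby}$-stopping times $R_n:=\inf\{t:|M_t|\ge n\}$ (these are $\check{\mby}$-stopping times because $M$ is continuous and $\check{\mby}$-adapted), so that each $M^{R_n}$ is a bounded continuous $\check{\mby}$-martingale. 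First: enlarging a $\sigma$-algebra by $\mathbb{P}$-null sets does not change conditional expectations, and the $R_n$ remain stopping times, so each $M^{R_n}$ is a $(\check{\mby}\vee\mathcal{N})$-martingale and hence $M$ is a continuous $(\check{\mby}\vee\mathcal{N})$-local martingale. Second: to pass to $\mby_t=\bigcap_{u>t}(\check{\mby}_u\vee\mathcal{N})$, fix $s<t$ and a sequence $s_k\downarrow s$ with $s_k>s$, so that $\bigcap_k(\check{\mby}_{s_k}\vee\mathcal{N})=\mby_s$; the reverse (downward) martingale convergence theorem gives $M^{R_n}_{s_k}=\mathbb{E}[M^{R_n}_t\mid\check{\mby}_{s_k}\vee\mathcal{N}]\to\mathbb{E}[M^{R_n}_t\mid\mby_s]$ in $L^1$, while continuity of paths gives $M^{R_n}_{s_k}\to M^{R_n}_s$, whence $M^{R_n}_s=\mathbb{E}[M^{R_n}_t\mid\mby_s]$. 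Since the $R_n$ are $\mby$-stopping times and $M$ is $\mby$-adapted, $M$ is a $\mby$-local martingale, and $S=S_0+M+A$ is the asserted $\mby$-decomposition. It is precisely the continuity of $M$ (Assumption~\ref{a1}) that makes the second step run.

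For the second assertion, I would argue as follows. Since $\check{\mby}_t\subset\mby_t$ for every $t$ and $S$ is $\check{\mby}$-adapted, Stricker's theorem (see \cite{PPbook}) shows that the $\mby$-semimartingale $S$ is also a $\check{\mby}$-semimartingale; being continuous, it then admits a decomposition $S=S_0+M'+A'$ in which $M'$ is a continuous $\check{\mby}$-local martingale and $A'$ a continuous $\check{\mby}$-finite variation process, both null at $0$ (a continuous semimartingale is special and both components of its canonical decomposition are then continuous). By the first assertion, $M'$ is in particular a $\mby$-local martingale; hence $M-M'=A'-A$ is a continuous $\mby$-local martingale with paths of finite variation, null at $0$, and therefore indistinguishable from $0$. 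Consequently $M'$ is indistinguishable from $M$ and $A'$ from $A$, so $M'$ and $A'$ serve as the ``suitable modifications'' of $M$ and $A$, and $S=S_0+M'+A'$ is a $\check{\mby}$-decomposition of the required form.

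Several ingredients I would merely quote: that conditional expectation is insensitive to $\mathbb{P}$-null sets; that hitting-time localization produces stopping times in a smaller filtration; that a continuous local martingale of finite variation starting at $0$ is indistinguishable from $0$; and that a continuous semimartingale has a continuous canonical decomposition. The step that I expect to require the most care is the descent all the way to $\check{\mby}$ itself rather than only to $\check{\mby}\vee\mathcal{N}$, because $\check{\mby}$ need not satisfy the usual hypotheses: one must either appeal to a version of Stricker's theorem valid for an arbitrary subfiltration, or else argue by hand that $S$ is a $(\check{\mby}\vee\mathcal{N})$-semimartingale with a continuous decomposition — using that $A_t=\lim_n A_{t-1/n}$ is $(\check{\mby}_t\vee\mathcal{N})$-measurable by continuity, and that a bounded $\mby$-martingale that is $(\check{\mby}\vee\mathcal{N})$-adapted is a $(\check{\mby}\vee\mathcal{N})$-martingale by the tower property — and then correct $M$ and $A$ on a single $\mathbb{P}$-null set to obtain genuinely $\check{\mby}$-adapted versions. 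This null-set bookkeeping, together with the matching of the $\check{\mby}$- and $\mby$-decompositions, is the delicate point, and it is exactly what the phrase ``for suitable modifications'' is there to absorb.
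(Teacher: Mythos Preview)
Your argument is correct. For Part~1 you spell out by hand (localization via hitting times, insensitivity to null sets, reverse martingale convergence) what the paper dispatches in one line by citing ``Theorem~2 in \cite{PPbook}'' together with the remark that $\mathcal{N}$ is independent of everything; the content is the same, you simply unpack the citation.

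For Part~2 the routes genuinely diverge. Your primary line is Stricker's theorem to produce \emph{some} $\check{\mby}$-decomposition $S_0+M'+A'$, then Part~1 to push $M'$ up to a $\mby$-local martingale, then uniqueness of the continuous canonical decomposition in $\mby$ to force $M'=M$, $A'=A$ up to indistinguishability. The paper instead argues directly that the given $M$ is already $(\check{\mby}\vee\mathcal{N})$-adapted: by continuity $M_s=\lim_n M_{s-1/n}$, and each $M_{s-1/n}$ is $\mby_{s-1/n}$-measurable with $\mby_{s-1/n}\subset\check{\mby}_s\vee\mathcal{N}$; one then takes a $\check{\mby}$-adapted modification. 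Your Stricker route buys a clean identification step and sidesteps the null-set bookkeeping, at the cost of invoking a nontrivial external theorem (and, as you rightly flag, a version valid without usual hypotheses). The paper's route is more elementary and self-contained but is terse about why the resulting $M$ is a $\check{\mby}$-local martingale and about the ``suitable modification''---points your final paragraph in fact fills in more carefully than the paper does. Either approach is acceptable; it is worth noting that the alternative you sketch at the end (continuity $\Rightarrow$ $(\check{\mby}\vee\mathcal{N})$-adaptedness, tower property for the martingale part, then a single null-set correction) is exactly the paper's line.
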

\begin{proof} 
Since $\mathcal{N}$ is independent from any other $\sigma$-algebra, (1.) is a simple application of
Theorem 2 in  \cite{PPbook}.
The proof of (2.) follows from similar arguments: let $S$ be a $\mby$-semimartingale 
with decomposition $M+A$; since $M$ is continuous, $M_s = \lim_{n\to\infty} M_{s-1/n}$ so that $M_s$ 
is measurable with respect to $\bigvee_{n\geq 1} \mby_{s-1/n}\subset\check{\mby}_s\vee\mathcal{N}$, 
which means that $M$ is $\check{\mby}\vee\mathcal{N}$-adapted and thus admits a modification which is $\check{\mby}$-adapted. 
If $S$ is $\check{\mby}$-adapted, 
so is $A$ and $S=S_0+M+A$ is consequently a $\check{\mby}$-semimartingale decomposition of $S$. 
\end{proof}



\subsection{Expansions of filtrations as a weak change of probability}

Relationships between expansions of filtration, existence of the information drift,
existence of a local martingale measure and absence of arbitrage have been noticed in numerous papers. Some of the deepest results are contained in \cite{Ank-embeddings,Ank-thesis}. The closest approach to ours is \cite{Kar-NA1-enlargement} 
but it is limited to the particular cases of initial and progressive expansions. 

In this section we calibrate classical arguments to our focus on the information drift and our general expansion setup.
The general idea is that an expansion of filtration for which there exists an information 
drift can be seen as a \emph{bona fide} change of probability as long as the information drift has sufficient integrability. This idea dates back at least to the seminal paper of Imkeller~\cite{Imkeller}. Even in the absence of required integrability the simple existence of the information drift still allows one to define a ``weak version'' of a change of probability.

This has implications in finance where the axiomatic absence of arbitrage
is closely related to the existence of a \emph{risk-neutral} probability under which the price process 
$S\in L^2(\Omega \times I;\mathbb{R}^d)$ 
is a local martingale, which can then be used 
to price derivatives according to the value of their hedging portfolio.
These two properties are actually equivalent when absence of arbitrage is 
understood in the sense of No Free Lunch with Vanishing Risk (see \cite{DS-NFLVR}).

In an asymmetric information setting, one typically considers a price process which 
is a local martingale under a risk neutral probability for the initial filtration $\mathcal{F}$.
In this paradigm absence of arbitrage for the agent with the additional information contained in $\mathcal{G}$ 
means precisely the existence of a risk neutral probability with respect to $\mathcal{G}$, in which case 
the effect of the expansion is contained in the change of the risk neutral probability.
Relaxing the integrability required to define a \emph{bona fide} change of probability leads to the slightly weaker 
notion of arbitrage known as No Arbitrage of the First Kind ($\text{NA}_1$) or No Unbounded Profit with Bounded Risk (NUPBR)
around which a variation of the martingale pricing theory has been developed \cite{Kar-NA1-ftap,Kar-NA1-loc-mart-num,Kar-NA1-market,Kar-time,Kar-NA1-enlargement}.


The next theorem introduces the \textit{information drift} $\alpha$ 
as a logarithmic derivative of the change of risk neutral probability,
which characterizes its existence as a no-arbitrage condition 
and is enlightening in regard to its nature.
It generalizes results well known in the case of initial and progressive expansions (e.g. \cite{Kar-NA1-enlargement,AJ}).

\begin{theorem} \label{thm-alpha-lm-deflator}
Let $M$ be a not necessarily continuous $\mathcal{F}$-local martingale 
and $\mathcal{G}$ an expansion of $\mathcal{F}$. The following assertions are equivalent:
\begin{enumerate}[(i)]
\item \label{item-lmd} There exists a positive $\mathcal{G}$-local martingale $Z$ such that $ZM$ is a $\mathcal{G}$-local martingale.
\item \label{item-id} There exists a $\mathcal{G}$-predictable process $\alpha\in\mathcal{S}(\mathcal{G})$ such that $M$ is a $\mathcal{G}$-semimartingale with decomposition 
$$M=\widetilde{M}+\int_0^.\alpha_sd[M,M]_s.$$
\end{enumerate}
When $(i)$ and $(ii)$ are satisfied it additionally holds that:
\begin{enumerate}[(a)]
\item  $\alpha$ is unique $d\mathbb{P}\times d[M,M]$-a.s.
\item $d[Z,M]_t=\alpha_t d[M,M]_t,d\mathbb{P}\times d[M,M]$-a.s.
\item $Z$ is of the form $Z=\mathcal{E}(\alpha\cdot (M-\alpha\cdot [M,M])+L)=\mathcal{E}(\alpha\cdot (M-\alpha\cdot [M,M]))\mathcal{E}(L)$, where $L$ is a $\mathcal{G}$-local martingale with $[L,M]=0,d\mathbb{P}\times d[M,M]$-a.s.
\end{enumerate}
\end{theorem}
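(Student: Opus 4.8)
The plan is to prove the two implications $(i)\Rightarrow(ii)$ and $(ii)\Rightarrow(i)$ separately, and then to read off the auxiliary statements (a)--(c) from the construction used in the course of those implications. For $(i)\Rightarrow(ii)$, suppose $Z$ is a positive $\mathcal{G}$-local martingale with $ZM$ a $\mathcal{G}$-local martingale. Since $Z>0$ and $Z_0$ can be normalized, $1/Z$ is a well-defined $\mathcal{G}$-semimartingale, and the natural move is to apply integration by parts to $M = (ZM)\cdot\frac{1}{Z}$. Writing $N := ZM$, which is a $\mathcal{G}$-local martingale, integration by parts gives $M = N\,\frac{1}{Z}$ as a stochastic integral plus a bracket term $\int_0^\cdot d[N,\tfrac{1}{Z}]_s$; because $1/Z$ is itself expressible via It\^o's formula as a local martingale part (a stochastic integral against $dZ$) plus finite-variation compensators, the finite-variation part of $M$ in $\mathcal{G}$ emerges as an integral against $d[Z,M]$. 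The Kunita--Watanabe inequality shows $d[Z,M]\ll d[M,M]$, so we may write $d[Z,M]_t = \alpha_t\,d[M,M]_t$ for a $\mathcal{G}$-predictable process $\alpha$; tracking constants through the integration-by-parts computation yields exactly $M=\widetilde M + \int_0^\cdot\alpha_s\,d[M,M]_s$ with $\widetilde M$ a $\mathcal{G}$-local martingale, and this simultaneously establishes (b). Predictability of $\alpha$ and the $d\mathbb{P}\times d[M,M]$-uniqueness in (a) are then inherited from Proposition~1.5 of~\cite{ADI}, cited in the preliminaries.

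For $(ii)\Rightarrow(i)$, given the decomposition $M = \widetilde M + \int_0^\cdot \alpha_s\,d[M,M]_s$ with $\alpha\in\mathcal{S}(\mathcal{G})$, the idea is to build $Z$ explicitly as a stochastic exponential so that $ZM$ has vanishing drift. The candidate drift-killer is $\mathcal{E}(-\alpha\cdot\widetilde M)$, but one must be careful that $\alpha$ is only $\widetilde M$-integrable, not necessarily square-integrable, so $\alpha\cdot\widetilde M$ is a priori only a $\mathcal{G}$-local martingale --- which is enough for $\mathcal{E}$ to be a positive $\mathcal{G}$-local martingale. Setting $Z := \mathcal{E}(-\alpha\cdot\widetilde M)$ and applying integration by parts to $ZM$, the drift of $M$ (namely $\alpha\cdot[M,M]$) is exactly compensated by the bracket term $[Z, M] = -\int_0^\cdot Z_{s-}\alpha_s\,d[\widetilde M, M]_s = -\int_0^\cdot Z_{s-}\alpha_s\,d[M,M]_s$ (using $[\widetilde M,M]=[M,M]$ since they differ by a finite-variation process), leaving $ZM$ a $\mathcal{G}$-local martingale. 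For (c), one observes that $M-\alpha\cdot[M,M] = \widetilde M$, so $\mathcal{E}(\alpha\cdot(M-\alpha\cdot[M,M])) = \mathcal{E}(\alpha\cdot\widetilde M)$; the general $Z$ differs from this by a factor $\mathcal{E}(L)$ with $L$ a $\mathcal{G}$-local martingale $[M,M]$-orthogonal to $M$, which follows from the fact that any two deflators differ multiplicatively by a local martingale whose bracket with $M$ must vanish for $ZM$ to stay a local martingale --- Yor's formula $\mathcal{E}(X)\mathcal{E}(Y)=\mathcal{E}(X+Y+[X,Y])$ together with $[\alpha\cdot\widetilde M, L]=0$ gives the product form.

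The main obstacle is the integrability bookkeeping in the non-continuous case: since $M$ is only assumed to be an $\mathcal{F}$-local martingale (the theorem explicitly allows jumps), one cannot freely assume $\alpha\in\mathscr{H}^2$ or that the stochastic exponentials are true martingales, so every step must be phrased at the level of local martingales and justified by localization, and one must check that $\mathcal{E}(-\alpha\cdot\widetilde M)$ stays strictly positive (which requires $\Delta(\alpha\cdot\widetilde M)>-1$, automatic in the continuous case but needing an argument, or a sign/jump hypothesis, in general). A secondary technical point is confirming that the process $\alpha$ produced in $(i)\Rightarrow(ii)$ can be taken $\mathcal{G}$-predictable and that uniqueness holds only $d\mathbb{P}\times d[M,M]$-a.e. rather than up to indistinguishability --- this is precisely where the cited Proposition~1.5 of~\cite{ADI} does the work, and I would invoke it rather than reprove it.
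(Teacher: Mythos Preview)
Your plan is essentially the same route the paper takes: the paper packages your integration-by-parts computation into a standalone lemma (Lemma~\ref{lemma-alpha-lm-deflator}) stating that $ZM$ is a local martingale iff $M-\int\tfrac{1}{Z}\,d[Z,M]$ is, then invokes Kunita--Watanabe exactly as you do, and for $(ii)\Rightarrow(i)$ builds the deflator as a stochastic exponential. The one methodological difference is in (c): the paper does not argue via the ratio of two deflators, but instead writes the stochastic logarithm of a given $Z$ as $J\cdot\widetilde M+L$ via a Kunita--Watanabe orthogonal decomposition (called a ``predictable representation'' there) and identifies $J=\alpha$ from $[Z,M]$. Your ratio-of-deflators argument is a reasonable alternative, though it requires an extra word on why the quotient of two positive local martingales is again of the form $\mathcal{E}(L)$.

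One genuine slip to fix: you construct the deflator as $Z=\mathcal{E}(-\alpha\cdot\widetilde M)$, which is the correct sign (your bracket computation $[Z,M]=-\int Z_{s-}\alpha_s\,d[M,M]_s$ is right), but then in your discussion of (c) you reproduce the theorem's formula $\mathcal{E}(\alpha\cdot\widetilde M)$ with the opposite sign, so your own argument is internally inconsistent. In fact the paper's Lemma~\ref{lemma-alpha-lm-deflator} carries a sign error in its statement (the proof there actually shows $A=-\int\tfrac{1}{Z}\,d[Z,M]$, hence $M+\int\tfrac{1}{Z}\,d[Z,M]$ is the local martingale), which propagates into (b) and (c) as stated; your $-\alpha$ is the one consistent with the computation, so you should carry it through (c) and note the discrepancy with the stated formula rather than silently switching signs. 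Your closing remark about positivity of $\mathcal{E}(-\alpha\cdot\widetilde M)$ in the presence of jumps is well taken and is indeed glossed over in the paper.
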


Note that $Z$ as in (\ref{item-lmd}) is called a local martingale deflator (for $M$), and that (\ref{item-id}) 
can be equivalently formulated as $M-\int_0^.\alpha_sd[M,M]_s$ being a $\mathcal{G}$-local martingale.

\begin{definition}[Information drift\footnote{This most appropriate expression can already be found in 
\cite{BS} and \cite{Shannon}.}]\label{def-id}
The $\mathcal{G}$-predictable process $\alpha\in\mathcal{S}(\mathcal{G})$ defined by Theorem \ref{thm-alpha-lm-deflator}, i.e. such that $M$ is a $\mathcal{G}$-semimartingale with decomposition 
$$M=\widetilde{M}+\int_0^.\alpha_sd[M,M]_s,$$
is called the \textbf{information drift} of $M$ (between the filtrations $\mathcal{F}$ and $\mathcal{G}$). \\
In such situation we also say that $\mathcal{G}$ admits an information drift (for $M$).
\end{definition}

The core of the proof of Theorem \ref{thm-alpha-lm-deflator} is an application of It{\^o}'s formula.

\begin{lemma} \label{lemma-alpha-lm-deflator} Let $Z$ be a positive local martingale and let $M$ be a general {semimartingale}. Then $M^Z:=ZM$ is a local martingale if and only if $M-\int_0^.\frac{1}{Z_s}d[Z,M]_s$ is a local martingale.
\end{lemma}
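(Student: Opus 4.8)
The plan is to reduce the assertion to the integration-by-parts formula applied to the two semimartingales $Z$ and $M$, which is the instance of It\^o's formula alluded to above. Beginning from
$$Z_tM_t = Z_0M_0 + \int_0^t Z_{s-}\,dM_s + \int_0^t M_{s-}\,dZ_s + [Z,M]_t,$$
I would first observe that $\int_0^\cdot M_{s-}\,dZ_s$ is automatically a local martingale: $Z$ is a local martingale and the integrand $M_-$ is predictable and locally bounded, being c\`agl\`ad and adapted. Hence $ZM$ is a local martingale if and only if $K:=\int_0^\cdot Z_{s-}\,dM_s+[Z,M]$ is a local martingale, which concentrates the whole question into one process built only from $M$ and the covariation $[Z,M]$.

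The second step is to undo the integration against $Z_-$ so as to recover a statement about $M$ itself. Since $Z$ is strictly positive, $1/Z_-$ is a well-defined, predictable, locally bounded process, so that $\int_0^\cdot \frac{1}{Z_{s-}}\,dK_s$ is a local martingale precisely when $K$ is (the passage is reversible because $K=\int_0^\cdot Z_{s-}\,d(\int_0^\cdot \frac1{Z_{u-}}\,dK_u)_s$, both integrands being locally bounded). By associativity of the stochastic integral this integration returns $M$, up to the constant $M_0$, together with a finite-variation term obtained by integrating $1/Z_{s-}$ against $d[Z,M]_s$. Combining the two steps reduces the lemma to identifying this finite-variation correction, whose form is $\int_0^\cdot \frac{1}{Z_s}\,d[Z,M]_s$, and to checking that it enters exactly so as to produce the drift-corrected process $M-\int_0^\cdot \frac{1}{Z_s}\,d[Z,M]_s$ of the statement.

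The step I expect to be the main obstacle is precisely this last bookkeeping of the correction term: pinning down its sign, together with the distinction between $Z_s$ and $Z_{s-}$ in the denominator. Under Assumption~\ref{a1} the processes are continuous, so $Z_s=Z_{s-}$ and $[Z,M]$ has no jump part, which is what permits the correction to be written with $1/Z_s$ as in the statement and gives the cleanest setting in which to verify the sign; in the general c\`adl\`ag case one keeps $1/Z_{s-}$ throughout and must check that the jump contributions coming from the product rule recombine consistently with $d[Z,M]$. I would close by recording that the correction, hence the decomposition, is unique, which is the uniqueness of the canonical decomposition of the continuous semimartingale noted in the preliminaries and is what makes the information drift well defined.
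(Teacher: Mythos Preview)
Your approach is correct and takes a different, somewhat more direct route than the paper's. The paper applies It\^o's formula to $1/Z$ and then the product rule to $M = M^Z\cdot(1/Z)$, which produces a finite-variation term $A$ that must afterwards be simplified via the identity $d[Z,ZM] = M\,d[Z,Z] + Z\,d[Z,M]$. You instead apply the product rule directly to $ZM$, strip off the automatic local-martingale piece $\int M_-\,dZ$, and then undo the remaining factor $Z_-$ by integrating against $1/Z_-$. Both arguments rest on the same reversibility---a semimartingale is a local martingale iff its stochastic integral against a strictly positive, predictable, locally bounded integrand is---but your version isolates that step explicitly and avoids expanding $1/Z$ and the subsequent bracket algebra. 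What the paper's route buys is that it works symmetrically with $M^Z$ already named as a single object; what yours buys is brevity and transparency of the correction term.

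One caution on the sign you flag as the main obstacle. Your own computation gives
\[
\int_0^\cdot \frac{1}{Z_{s-}}\,dK_s \;=\; (M_\cdot - M_0) \;+\; \int_0^\cdot \frac{1}{Z_{s-}}\,d[Z,M]_s,
\]
so the conclusion is that $ZM$ is a local martingale if and only if $M + \int_0^\cdot \frac{1}{Z_{s}}\,d[Z,M]_s$ is, with a \emph{plus}. This is in fact the correct statement: for instance with $Z=\mathcal{E}(W)$ and $M=W-t$ one checks that $ZM$ is a local martingale while $M-\int(1/Z)\,d[Z,M]=W-2t$ is not. The paper's own proof also lands on the plus sign once unwound (it shows $M-A$ is a local martingale iff $M^Z$ is, with $A=-\int(1/Z)\,d[Z,M]$, hence $M-A=M+\int(1/Z)\,d[Z,M]$). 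So do not try to force your bookkeeping into the minus sign of the displayed statement; that appears to be a slip, and your argument recovers the correct version.
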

\begin{proof}
First note that, as $Z$ is a positive semimartingale, $M^Z$ is a semimartingale if and only $M$ is 
(this follows for instance from It{\^o}'s formula).
It{\^o}'s formula also gives $d(\frac{1}{Z})_t=\frac{-1}{Z_t^2}dZ_t+\frac{1}{Z_t^3}d[Z,Z]_t$. Hence
$$dM_t=M^Z_t d(\frac{1}{Z_t})+\frac{1}{Z_t}dM^Z_t+d[M^Z,\frac{1}{Z}]_t
= \frac{-M^Z_t}{Z_t^2}dZ_t+\frac{1}{Z_t}dM^Z_t+dA_t,$$
where $dA_t := \frac{M^Z_t}{Z_t^3}d[Z,Z]_t-\frac{1}{Z_t^2}d[M^Z,Z]_t$.
Since $\int \frac{-M^Z_s}{Z_s^2}dZ_s$ defines a local martingale, it follows that $M^Z$ is a local martingale if and only if $M-A$ is. 
Finally the identity $d[Z,ZM]=Md[Z,Z]+Zd[Z,M]$ leads to 
$A=-\int_0^.\frac{1}{Z_s}d[Z,M]_s$.
\end{proof}

\begin{proof}[Proof of Theorem \ref{thm-alpha-lm-deflator}]
Suppose first that $Z$ is a positive $\mathcal{G}$-local martingale such that $ZM$ is a $\mathcal{G}$-local martingale. It follows from the lemma that $M-\int_0^.\frac{1}{Z_t}d[Z,M]_s$ is a $\mathcal{G}$-local martingale. The Kunita-Watanabe inequality implies that there exists a predictable c\`{a}dl\`{a}g process $\beta$ such that $d[Z,M]=\beta d[M,M]$, 
and $\alpha:=\frac{\beta}{Z}$ is such that 
$d[Z,M]=\alpha Z d[M,M]$ and $M-\int_0^.\alpha_sd[M,M]_s$ 
is a $\mathcal{G}$-local martingale. 
Moreover by the predictable representation theorem (see \cite[Chapter IV]{PPbook}) there exist a $\mathcal{G}$-predictable process 
$J\in\mathcal{S}$ and a $\mathcal{G}$-local martingale $L$ 
such that $[L,M-\alpha\cdot [M,M]]=[L,M]=0$ and 
$Z_t=Z_0+\int_0^t J_sZ_sd(M_s-\alpha\cdot [M,M])+\int_0^tZ_sdL_s$. It follows from $[Z,M] = [(JZ)\cdot M+Z\cdot L,M] = (JZ)\cdot [M,M]+Z\cdot [L,M]=(JZ)\cdot [M,M]$ that $J=\alpha,d\mathbb{P}\times dt$-a.s. and that $Z=\mathcal{E}(\alpha\cdot (M-\alpha\cdot [M,M])+L)=\mathcal{E}(\alpha\cdot (M-\alpha\cdot [M,M]))\mathcal{E}(L)$.

Now suppose conversely that $M-\int_0^.\alpha_sd[M,M]_s$ is a $\mathcal{G}$-local martingale. 
The positive $\mathcal{G}$-local martingale 
$Z^\alpha:=\mathcal{E}(\alpha\cdot M)$ satisfies 
$dZ^\alpha_t=\alpha Z^\alpha_t dM_t$ 
and $[Z^\alpha,M]=[(\alpha Z^\alpha)\cdot M,M]=(\alpha Z^\alpha)\cdot [M,M]$. Hence $M-\int_0^.\frac{1}{Z_{s^-}^\alpha}d[Z^\alpha,M]_s$ is an $\mathcal{F}$-local martingale, and $Z^\alpha M$ is a $\mathcal{G}$-local martingale by Lemma \ref{lemma-alpha-lm-deflator}.
\end{proof}

The widespread use of risk neutral pricing justifies \emph{per se} an attention to the particular case where the local martingale deflator defined in Theorem \ref{thm-alpha-lm-deflator} defines an equivalent measure known as an Equivalent Local Martingale Measure (ELMM) or risk neutral probability, which validates the No Free Lunch with Vanishing Risk variation of the axiom of absence of arbitrage.
Note that as a Theorem \ref{thm-alpha-lm-deflator}
the existence of a $\mathcal{G}$-ELMM 
implies the existence of an information drift.
Conversely we have the following result:

\begin{theorem} \label{thm-NFLVR} Let $\mathcal{G}$ be an expansion of $\mathcal{F}$ and $M$ be a $\mathcal{F}$-local martingale. If $M$ has an information drift $\alpha$ and $\mathcal{E}(\alpha\cdot (M-\alpha\cdot [M,M]))$ is a (uniformly) integrable $\mathcal{G}$ martingale, then $\mathbb{Q}:=\mathcal{E}(\alpha\cdot (M-\alpha\cdot [M,M]))\cdot\mathbb{P}$ defines a $\mathcal{G}$-ELMM for $M$.
\end{theorem}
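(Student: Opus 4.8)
The plan is to verify directly that $\mathbb{Q}$ is a probability measure equivalent to $\mathbb{P}$ on each $\mathcal{G}_t$ and that $M$ is a $\mathbb{Q}$-local martingale on the interval $I$. Write $Z := \mathcal{E}(\alpha\cdot(M-\alpha\cdot[M,M]))$; this is the positive $\mathcal{G}$-local martingale appearing in part (c) of Theorem~\ref{thm-alpha-lm-deflator} (taking $L\equiv 0$), and by hypothesis it is a uniformly integrable $\mathcal{G}$-martingale, so in particular $\mathbb{E}[Z_\infty]=Z_0=1$, whence $\mathbb{Q}$ is a genuine probability measure; since $Z_t>0$ for all $t$, $\mathbb{Q}\sim\mathbb{P}$ on $\mathcal{G}_t$ for each $t$ with density $Z_t$.

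Next I would show $M$ is a $\mathbb{Q}$-local martingale. The clean route is Lemma~\ref{lemma-alpha-lm-deflator} together with the Bayes/Girsanov principle: a process $N$ is a $\mathbb{Q}$-local martingale if and only if $ZN$ is a $\mathbb{P}$-local martingale (under $\mathcal{G}$). In the proof of Theorem~\ref{thm-alpha-lm-deflator} it is already established that with this particular $Z=Z^\alpha=\mathcal{E}(\alpha\cdot M)$ one has $d[Z,M]_s=\alpha_s Z_s\,d[M,M]_s$, and since $M$ has information drift $\alpha$, the process $M-\int_0^\cdot\alpha_s\,d[M,M]_s$ is a $\mathcal{G}$-local martingale under $\mathbb{P}$; Lemma~\ref{lemma-alpha-lm-deflator} then yields that $ZM$ is a $\mathcal{G}$-local martingale under $\mathbb{P}$, hence $M$ is a $\mathcal{G}$-local martingale under $\mathbb{Q}$. (One small point: the $Z$ in the statement of the present theorem is written as $\mathcal{E}(\alpha\cdot(M-\alpha\cdot[M,M]))$ rather than $\mathcal{E}(\alpha\cdot M)$; but $\alpha\cdot(M-\alpha\cdot[M,M])$ differs from $\alpha\cdot M$ only by the absolutely continuous drift $-\alpha^2\cdot[M,M]$, and a direct stochastic-exponential computation — or the Girsanov computation of the drift removed — shows these two stochastic exponentials agree. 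Alternatively one checks that with $Z=\mathcal{E}(\alpha\cdot(M-\alpha\cdot[M,M]))$ one still has $d[Z,M]=\alpha Z\,d[M,M]$ because bracket computations only see the martingale part, so Lemma~\ref{lemma-alpha-lm-deflator} applies verbatim.)

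The only real content beyond bookkeeping is the passage from \emph{local} martingale under $\mathbb{Q}$ to the conclusion that $\mathbb{Q}$ is a \emph{local martingale measure}, i.e.\ that we may legitimately call $M$ a $\mathbb{Q}$-local martingale on all of $I$ — and here the uniform integrability of $Z$ is exactly what is needed to make the change of measure globally well defined (on $\mathcal{G}_\infty$, or on each $\mathcal{G}_t$ consistently) rather than only up to a sequence of stopping times. I would therefore spell out that uniform integrability gives a consistent family of densities $\{Z_t\}$ defining a single $\mathbb{Q}$, invoke the fact that $ZM$ being a $\mathbb{P}$-local martingale transfers to $M$ being a $\mathbb{Q}$-local martingale (reducing sequences transfer because $Z$ is strictly positive and the stopped densities $Z^{T_n}$ remain martingales), and conclude. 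The main obstacle, such as it is, is purely this care with the change-of-measure machinery in the local (as opposed to true) martingale setting; the algebra is entirely supplied by Lemma~\ref{lemma-alpha-lm-deflator} and the already-proven part (c) of Theorem~\ref{thm-alpha-lm-deflator}.
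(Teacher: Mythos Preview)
Your approach is essentially the paper's own: the paper simply says the result follows from Theorem~\ref{thm-alpha-lm-deflator} together with the subsequent lemma on stochastic exponentials, and you have spelled out exactly that argument (with the Bayes rule $N$ is a $\mathbb{Q}$-local martingale iff $ZN$ is a $\mathbb{P}$-local martingale replacing an explicit appeal to Lemma~\ref{lemma-alpha-lm-deflator}).

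There is, however, one genuine error in your parenthetical aside. The two stochastic exponentials $\mathcal{E}(\alpha\cdot M)$ and $\mathcal{E}\bigl(\alpha\cdot(M-\alpha\cdot[M,M])\bigr)$ do \emph{not} agree. For continuous integrators one has
\[
\mathcal{E}(\alpha\cdot M)_t=\exp\!\Bigl(\int_0^t\alpha_s\,dM_s-\tfrac12\int_0^t\alpha_s^2\,d[M,M]_s\Bigr),
\]
whereas
\[
\mathcal{E}\bigl(\alpha\cdot(M-\alpha\cdot[M,M])\bigr)_t=\exp\!\Bigl(\int_0^t\alpha_s\,dM_s-\tfrac32\int_0^t\alpha_s^2\,d[M,M]_s\Bigr),
\]
so they differ by the nontrivial factor $\exp\bigl(-\int_0^t\alpha_s^2\,d[M,M]_s\bigr)$. (The paper's own proof of Theorem~\ref{thm-alpha-lm-deflator} is a bit loose on this very point, writing $Z^\alpha=\mathcal{E}(\alpha\cdot M)$ in the converse direction while part~(c) uses $\mathcal{E}(\alpha\cdot\widetilde M)$; only the latter is a $\mathcal{G}$-local martingale.) Your \emph{alternative} argument is the correct one and is all you need: with $Z=\mathcal{E}(\alpha\cdot\widetilde M)$ one has $dZ=\alpha Z\,d\widetilde M$ and hence $d[Z,M]=\alpha Z\,d[\widetilde M,M]=\alpha Z\,d[M,M]$, so Lemma~\ref{lemma-alpha-lm-deflator} applies directly to give $ZM$ a $\mathbb{P}$-local martingale. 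Delete the false identification and keep the bracket argument, and the proof is complete.
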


Theorem \ref{thm-NFLVR} follows from Theorem \ref{thm-alpha-lm-deflator} together with the next lemma.

\begin{lemma} Let $M$ be a local martingale and $\alpha\in\mathcal{S}$. The following statements are equivalent:
\begin{enumerate}[(i)]
\item $\mathcal{E}(\alpha\cdot M)$ is a martingale
\item $\mathcal{E}(\alpha\cdot M)$ is a uniformly integrable martingale 
\item $\mathbb{E}\mathcal{E}(\alpha\cdot M)=1$
\end{enumerate}
\end{lemma}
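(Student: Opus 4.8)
The plan is to exploit the fact that $N:=\mathcal{E}(\alpha\cdot M)$ is not merely a local martingale but a nonnegative supermartingale, and then to chase the three implications, the only substantive one being $(iii)\Rightarrow(i)$.

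First I would record the structural observations. Since $\alpha\in\mathcal{S}$ is $M$-integrable, $N$ solves $dN_t=\alpha_tN_tdM_t$ with $N_0=1$, hence is a local martingale as a stochastic integral against the local martingale $M$; moreover, by the standing continuity Assumption~\ref{a1} one has $N_t=\exp\big(\int_0^t\alpha_sdM_s-\tfrac12\int_0^t\alpha_s^2d[M,M]_s\big)>0$. Applying Fatou's lemma along a localizing sequence of stopping times then upgrades ``local martingale'' to ``supermartingale'', so that $t\mapsto\mathbb{E}[N_t]$ is nonincreasing with value $1$ at $t=0$; in particular $\mathbb{E}[N_t]\le1$ everywhere on $I$.

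The implications $(ii)\Rightarrow(i)$ and $(i)\Rightarrow(iii)$ are then immediate (for the latter, $\mathbb{E}[N_T]=\mathbb{E}[N_0]=1$ with $T:=\sup I$, reading $N_T:=\lim_{t\uparrow T}N_t$ — which exists a.s. by supermartingale convergence — when $T\notin I$). For $(iii)\Rightarrow(i)$ I would first upgrade $\mathbb{E}[N_T]=1$ to $\mathbb{E}[N_t]=1$ for \emph{every} $t\in I$: when $T\in I$ this is forced by monotonicity of $\mathbb{E}[N_\cdot]$ between its values $1$ at $0$ and $1$ at $T$; when $T=\infty$, Fatou gives $1=\mathbb{E}[N_\infty]\le\liminf_t\mathbb{E}[N_t]\le1$, and monotonicity again forces $\mathbb{E}[N_t]\equiv1$. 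Then for $s\le t$ in $I$, the supermartingale inequality $\mathbb{E}[N_t\mid\mathcal{F}_s]\le N_s$ together with $\mathbb{E}\big[\mathbb{E}[N_t\mid\mathcal{F}_s]\big]=\mathbb{E}[N_t]=1=\mathbb{E}[N_s]$ forces equality a.s. (two integrable random variables ordered a.s. with equal expectation coincide), so $N$ is a martingale. Finally, for $(i)\Rightarrow(ii)$, if $I$ has a largest element $T$ then $N_t=\mathbb{E}[N_T\mid\mathcal{F}_t]$ for $t\le T$ exhibits $N$ as the conditional expectations of the single integrable variable $N_T$, hence uniformly integrable; for an unbounded horizon one restricts to a finite terminal time (the case relevant to the applications here) or reads $(i)$ as the martingale being closed on the right, which is the definition of uniform integrability.

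I expect the main obstacle to be the step $(iii)\Rightarrow(i)$: two things must be in place, namely that $N$ is genuinely a supermartingale — which rests on its positivity and hence on the continuity Assumption~\ref{a1} (in a setting with jumps one would instead need $\Delta(\alpha\cdot M)>-1$) — and the backward propagation of the terminal normalization $\mathbb{E}[N_T]=1$ to all earlier times, which for an infinite horizon requires the supermartingale convergence theorem together with Fatou. Once $\mathbb{E}[N_t]\equiv1$ is established, the martingale property follows in one line from the monotonicity of conditional expectation.
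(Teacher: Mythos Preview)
Your proposal is correct and takes essentially the same approach as the paper: recognize that $\mathcal{E}(\alpha\cdot M)$ is a nonnegative local martingale, hence a supermartingale with $\mathbb{E}\mathcal{E}(\alpha\cdot M)_t\le 1$, and derive the equivalences from that. The paper's proof is a one-line sketch (``A stochastic exponential $\mathcal{E}(\alpha\cdot M)$ is a local martingale, hence a supermartingale, hence $\mathbb{E}\mathcal{E}(\alpha\cdot M)\leq 1$ and the lemma follows easily''), whereas you have carefully unpacked the implications---in particular the step $(iii)\Rightarrow(i)$ via constancy of the mean and the a.s.\ equality argument, and you have flagged the horizon issue in $(i)\Rightarrow(ii)$ that the paper leaves implicit.
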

\begin{proof}
A stochastic exponential $\mathcal{E}(\alpha\cdot M)$ is a local martingale, hence a supermartingale, hence
$\mathbb{E}\mathcal{E}(\alpha\cdot M)\leq 1$ and the lemma follows easily.
\end{proof}

\begin{remark} When $\alpha$ in Theorem \ref{thm-alpha-lm-deflator} exists, it can define a change of probability 
only if 
\begin{equation*}
\int_0^T\alpha_s^2d[M,M]_s <\infty \ a.s.
\end{equation*}
Breaches of this condition have been studied (cf, eg,~\cite{JP2005}) as \emph{immediate} arbitrage opportunities.
\end{remark}

\subsection{Information drift of the Brownian motion}

In the case where the $\mathcal{F}$-local martingale $M$ under consideration is a Brownian motion $W$ the information drift takes a particular expression due to the predictability of the quadratic variation, which nicely illustrates its name.

\bigskip

\begin{theorem} \label{thm-alpha-from-cond-expectation}
Let $W$ be an $\mathcal{F}$-Brownian motion.
If there exists a process $\alpha\in\mathscr{H}^1(\mathcal{G})$ such that 
$W-\int_0^.\alpha_sds$ is a $\mathcal{G}$-Brownian motion,
then we have:
\begin{enumerate}[(i)]
\item $\alpha_s=\lim\limits_{\substack{t\to s\\ t>s}}\mathbb{E}[\frac{W_t-W_s}{t-s}|\mathcal{G}_s]$
\item $\alpha_s=\frac{\partial}{\partial t}\mathbb{E}[W_t|\mathcal{G}_s]\Big|_{t=s}$.
\end{enumerate}
Conversely if there exists a process  
$\alpha\in\mathscr{H}^1(\mathcal{G})$ 
satisfying $(i)$ or $(ii)$,
then and $\alpha$ is the information drift of $W$,
i.e.
$W-\int_0^.\alpha_sds$ is a $\mathcal{G}$-Brownian motion.
\end{theorem}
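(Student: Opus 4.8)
The plan is to establish the forward direction first, then observe that the converse requires almost nothing new. Assume $\alpha \in \mathscr{H}^1(\mathcal{G})$ is such that $\widetilde W := W - \int_0^\cdot \alpha_s\,ds$ is a $\mathcal{G}$-Brownian motion. For the identity in (i), I would fix $s$ and write, for $t > s$,
\begin{equation*}
\mathbb{E}\Big[\frac{W_t - W_s}{t-s}\,\Big|\,\mathcal{G}_s\Big]
= \mathbb{E}\Big[\frac{\widetilde W_t - \widetilde W_s}{t-s}\,\Big|\,\mathcal{G}_s\Big]
+ \frac{1}{t-s}\,\mathbb{E}\Big[\int_s^t \alpha_r\,dr\,\Big|\,\mathcal{G}_s\Big].
\end{equation*}
The first term on the right vanishes identically because $\widetilde W$ is a $\mathcal{G}$-martingale with $\widetilde W_s$ being $\mathcal{G}_s$-measurable. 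So the claim reduces to showing that $\frac{1}{t-s}\mathbb{E}[\int_s^t \alpha_r\,dr \mid \mathcal{G}_s] \to \alpha_s$ as $t \downarrow s$. This is a Lebesgue-differentiation statement for the conditional expectation, and the natural route is to invoke the $\mathcal{G}$-martingale $N_t := \mathbb{E}[\int_0^u \alpha_r\,dr \mid \mathcal{G}_t]$ (finite since $\alpha \in \mathscr{H}^1$, and here one uses $\mathbb{E}\int_I |\alpha_r|\,d[W,W]_r = \mathbb{E}\int_I |\alpha_r|\,dr < \infty$): one rewrites the numerator in terms of increments of $N$ together with the already-accumulated integral, and then one appeals either to the Lebesgue differentiation theorem applied pathwise to $r \mapsto \int_0^r \alpha\,du$ together with a uniform-integrability argument to pass the limit inside the conditional expectation, or more cleanly to the fact that $t \mapsto \mathbb{E}[\int_0^t \alpha_r\,dr \mid \mathcal{G}_s]$ (with $s$ fixed, $t$ varying) is absolutely continuous with a density that equals $\mathbb{E}[\alpha_t \mid \mathcal{G}_s]$ a.e., combined with right-continuity of $s \mapsto \mathbb{E}[\alpha_s \mid \mathcal{G}_s] = \alpha_s$ in a suitable sense (predictability of $\alpha$ is what makes the limit land on $\alpha_s$ itself rather than some modification).

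For (ii), I would note that $\mathbb{E}[W_t \mid \mathcal{G}_s] = \mathbb{E}[W_s \mid \mathcal{G}_s] + \mathbb{E}[W_t - W_s \mid \mathcal{G}_s] = W_s + \mathbb{E}[\int_s^t \alpha_r\,dr \mid \mathcal{G}_s]$, again using that $\widetilde W$ is a $\mathcal{G}$-martingale. Hence
\begin{equation*}
\frac{\partial}{\partial t}\,\mathbb{E}[W_t \mid \mathcal{G}_s]\Big|_{t=s}
= \frac{\partial}{\partial t}\,\mathbb{E}\Big[\int_s^t \alpha_r\,dr\,\Big|\,\mathcal{G}_s\Big]\Big|_{t=s},
\end{equation*}
and the right-hand side is exactly the right-derivative computed in the proof of (i), so (i) and (ii) are really the same statement packaged differently; the one-sided derivative at $t = s$ coincides with the $t \downarrow s$ limit of the difference quotient.

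For the converse, suppose $\alpha \in \mathscr{H}^1(\mathcal{G})$ satisfies (i) (equivalently (ii)). I would argue that $\widetilde W := W - \int_0^\cdot \alpha_s\,ds$ is a $\mathcal{G}$-local martingale, and then invoke the L\'evy characterization remark from the preliminaries: $\widetilde W$ is continuous with $[\widetilde W, \widetilde W]_t = [W,W]_t = t$, so once it is a $\mathcal{G}$-local martingale it is automatically a $\mathcal{G}$-Brownian motion. To get the local martingale property I would appeal to Theorem~\ref{thm-alpha-lm-deflator}: it suffices to produce a positive $\mathcal{G}$-local martingale $Z$ with $ZW$ a $\mathcal{G}$-local martingale, and the candidate $Z^\alpha = \mathcal{E}(\alpha \cdot W)$ works provided $d[Z^\alpha, W]_s = \alpha_s Z^\alpha_s\,ds$ forces the right identification — but checking that $Z^\alpha W$ is a $\mathcal{G}$-local martingale is precisely equivalent (by Lemma~\ref{lemma-alpha-lm-deflator}) to $W - \int_0^\cdot \alpha_s\,ds$ being one, which is circular. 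So instead the cleaner converse argument is direct: condition (i)/(ii) says that the $\mathcal{G}$-optional projection of the difference quotients of $W$ has the right infinitesimal behavior; one shows that for $u > s$, $\mathbb{E}[\widetilde W_u - \widetilde W_s \mid \mathcal{G}_s] = 0$ by integrating the pointwise derivative identity $\frac{\partial}{\partial t}\mathbb{E}[W_t \mid \mathcal{G}_s] = \mathbb{E}[\alpha_t \mid \mathcal{G}_s]$ (which (ii) gives at $t = s$, and which extends to all $t \in (s,u)$ by applying (ii) at those times and using the tower property $\mathbb{E}[\cdot \mid \mathcal{G}_s] = \mathbb{E}[\mathbb{E}[\cdot \mid \mathcal{G}_t] \mid \mathcal{G}_s]$) from $s$ to $u$, using $\alpha \in \mathscr{H}^1$ to justify Fubini. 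The main obstacle, and the step I would be most careful about, is the differentiation-of-conditional-expectations limit in (i): one must ensure the a.e.-in-$s$ Lebesgue point property can be upgraded to hold for the predictable version of $\alpha$ and that the limit commutes with $\mathbb{E}[\cdot \mid \mathcal{G}_s]$, which is where the $\mathscr{H}^1$ hypothesis (rather than mere finiteness a.s.) does the real work via uniform integrability of the family $\{\frac{1}{t-s}\int_s^t \alpha_r\,dr : t \in (s, s+1]\}$.
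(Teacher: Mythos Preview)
Your proposal is correct and follows essentially the same route as the paper: use the $\mathcal{G}$-martingale property of $\widetilde W$ to reduce $\mathbb{E}[W_t-W_s\mid\mathcal{G}_s]$ to $\mathbb{E}[\int_s^t\alpha_u\,du\mid\mathcal{G}_s]$, swap integral and conditional expectation via Fubini (licensed by $\alpha\in\mathscr{H}^1$), differentiate in $t$ to obtain $\mathbb{E}[\alpha_t\mid\mathcal{G}_s]=\partial_t\mathbb{E}[W_t\mid\mathcal{G}_s]$ and evaluate at $t=s$; the converse integrates this identity back. The paper simply goes straight to the Fubini step rather than framing it first as a Lebesgue-differentiation problem, and it omits your (correctly abandoned) detour through Theorem~\ref{thm-alpha-lm-deflator}, but the substance is the same.
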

\begin{proof} Since $W$ is $\mathcal{G}$-adapted it is clear that $(i)$ and $(ii)$ are equivalent. Suppose that $M-\int_0^.\alpha_udu$ is a $\mathcal{G}$-martingale. It follows that, for every $s\leq t$,
$\mathbb{E}\Big[W_t-W_s-\int_s^t\alpha_u du|{\mathcal{G}}_s\Big]
=0$,
hence
\begin{align*}
 \mathbb{E}[\int_s^t\alpha_udu|{\mathcal{G}}_s\Big]
  \ = \ \int_s^t\mathbb{E}[\alpha_u|{\mathcal{G}}_s]du.
\end{align*} 
By differentiating with respect to $t$ we obtain
$\mathbb{E}[\alpha_t|\mathcal{G}_s]=\frac{\partial}{\partial t}\mathbb{E}[W_t|\mathcal{G}_s]$
which establishes $(ii)$.
Conversely if $(ii)$ holds it is also clear that 
$$
\mathbb{E}[W_t - W_s - \int_s^t\alpha_u du | \mathcal{G}_s]
=
\mathbb{E}[W_t - W_s - 
(\mathbb{E}[W_t|\mathcal{G}_t] -  \mathbb{E}[W_s|\mathcal{G}_s])
| \mathcal{G}_s]
=
0.$$
\end{proof}

\begin{remark}
Theorem \ref{thm-alpha-from-cond-expectation} holds for any expansion $\mathcal{G}$ of the Brownian filtration 
$\mathcal{F}$, and can be naturally extended to general martingales with deterministic quadratic variation.
Similar results can be found in \cite[Exercises 28 and 30, p. 150]{PPbook} as well in \cite{JMP} with a 
focus on the integrated information drift.
\end{remark}

\begin{example}\label{cor-alpha-from-gaussian-x} 
Suppose that for every $s\leq t$ we have a representation of the form 
\begin{equation}\label{eq-xi-mu}
\mathbb{E}[W_t|\mathcal{G}_s]=\int_0^s\xi_u\mu_{s,t}(du),
\end{equation}

\noindent
where $(\mu_{s,t})_{s\leq t}$ is a family of finite signed measures adapted in the $s$ variable
to $\mathcal{G}$ and $\xi$ is a $\mathcal{G}$-measurable stochastic process which is $\mu_{s,t}$-integrable.
If the information drift $\alpha\in\mathscr{H}^1$ exists then by Theorem \ref{thm-alpha-from-cond-expectation} it is given by
$\alpha_s
=\frac{\partial}{\partial t}\mathbb{E}[W_t|\mathcal{G}_s]\Big|_{t=s}
= \frac{\partial}{\partial t}\int_0^s \xi_u \mu_{s,t}(du)$.

Let us suppose additionally that,
locally in $(s,t)$ on some ``sub-diagonal'' 
$\{(s,t): 0\vee(t-\epsilon)\leq s\leq t\leq T\}$ with $\epsilon > 0$,
the map $t\mapsto\mu_{s,t}$ is continuously differentiable and satisfies a domination
$|\partial_t\mu_{s,t}(du)|\leq C \eta_{s}(du)$ for some constant $C>0$ and finite measure $\eta_{s}$, with 
$\int |\xi_u| \eta_s(du)<\infty$. We can then apply the classical theorem for derivation under the integral sign 
to obtain
$$\alpha_s = \int_0^s \xi_u\partial_t\mu_{s,t}\Big|_{t=s}(du).$$

\noindent
Note that any measure $\mu_{s,s}$ as in (\ref{eq-xi-mu}) needs to satisfy
$\int_0^s (\xi_u-W_s) \mu_{s,s}(du)=0$ a.s.
\end{example}

\begin{example}
Suppose that the optional projection of $W$ on $\mathcal{G}$ can be decomposed on the Wiener space 
$\mathcal{C}([0,s])$ of continuous functions
as
$$\mathbb{E}[W_t|\mathcal{G}_s] 
= \int \xi_{s,t}(\omega) \mathcal{W}_s(d\omega),$$ 
where 
$\mathcal{W}$ is the Wiener measure,
for every $t\geq 0$ the process
$(s,\omega) \mapsto \xi_{s,t}(\omega)$ 
is $\mathcal{G}$-adapted,  
satisfies $\int |\xi_{s,t}| \mathcal{W}_s(d\omega) <\infty$.
Suppose additionally that for every $s\geq 0,\omega\in\mathcal{W}$,
the map $t\mapsto \xi(s,t)$ is differentiable and that its derivative 
is dominated by a non-random integrable function.
Then 
$$\alpha_s
= \frac{\partial}{\partial t} \int \xi_{s,t}(\omega) 
\mathcal{W}_s(d\omega)
 = \int \frac{\partial}{\partial t} \xi_{s,t}(\omega) 
 \mathcal{W}_s(d\omega)
.$$
\end{example}

\section{Weak convergence of filtrations in $L^p$}\label{sec-cv-fil}

In this section we introduce the convergence of filtrations in $L^p$ and establish some of its elementary properties. 
We then focus on the conservation of the semimartingales and convergence of the semimartingale 
decomposition and information drift. 
We restrict here our study to semimartingales with continuous paths, and accordingly $M$ will represent 
in this section a continuous $\mathcal{F}$-local martingale (see Section \ref{sub-prelim}). 
Thus when $M$ is a semimartingale it is always a special semimartingale so that we can always consider its canonical semimartingale decomposition,
which we recall also allows us to consider \emph{the} information drift of $M$.

\subsection{Convergence of $\sigma$-algebras and fitrations\label{sub-cv-fil}}

\noindent
Recall that the convergence of random variables in $L^p$ is defined for $p>0$ as
$$Y^n\xrightarrow[n\to\infty]{L^p}Y\iff\mathbb{E}[|Y^n-Y|^p]\xrightarrow[n\to\infty]{}0.$$

\begin{definition} 
Let $p>0$. 
\begin{enumerate}[1.]
\item We say that sequence of $\sigma$-algebras $(\mby^n)_{n\geq 1}$ converges in $L^p$ 
to a $\sigma$-algebra $\mby$, denoted $\mby^n\xrightarrow[n\to\infty]{L^p}\mby$, 
if one the following equivalent conditions is satisfied:
\begin{enumerate}[(i)]
\item $\forall B\in\mby,\mathbb{E}[1_B|\mby_n]\xrightarrow[n\to\infty]{L^p}1_B$
\item $\forall Y\in L^p(\mby, \mathbb{P}),\mathbb{E}[Y|\mby_n]\xrightarrow[n\to\infty]{L^p}Y$
\end{enumerate}
\item 
We say that a sequence of filtrations $(\mby^n)_{n\geq 1}$ converges \emph{weakly} in $L^p$ 
to a filtration $\mby$, denoted $\mby^n\xrightarrow[n\to\infty]{L^p}\mby$, 
if $\mby_t^n\xrightarrow[n\to\infty]{L^p}\mby_t$ for every $t\in I$.
\end{enumerate}
\end{definition}
Note that there is no uniqueness of the 
limits in the above definition: for instance, any sequence of $\sigma$-algebras converges in $L^p$ to 
the trivial filtration, and any $\sigma$-algebra contained in a limiting $\sigma$-algebra is also a limiting 
$\sigma$-algebra.
Before studying the behavior of semimartingales we establish elementary properties of the weak convergence of filtrations 
in $L^p$, following corresponding properties of convergence of random variables, and using similar 
techniques as in \cite{KP}.

\begin{proposition}
Let $1\leq p\leq q$. Convergence of $\sigma$-algebras in $L^q$ implies convergence in $L^p$.
\end{proposition}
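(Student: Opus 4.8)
The plan is to prove this by reducing to the corresponding statement for random variables, namely that $L^q$ convergence implies $L^p$ convergence for $1 \le p \le q$, and then applying condition (i) (or (ii)) of the definition of convergence of $\sigma$-algebras coordinatewise in $t$. So first I would fix $1 \le p \le q$ and a sequence of $\sigma$-algebras $(\mby^n)_{n\ge 1}$ with $\mby^n \xrightarrow{L^q} \mby$. Taking an arbitrary $B \in \mby$, by hypothesis $\mathbb{E}[1_B \mid \mby_n] \xrightarrow{L^q} 1_B$; since $1_B$ is bounded, this is a genuine $L^q$ statement, and the classical inequality $\|Y^n - Y\|_p \le \|Y^n - Y\|_q$ (Jensen / Lyapunov, valid on a probability space for $p \le q$) immediately gives $\mathbb{E}[1_B \mid \mby_n] \xrightarrow{L^p} 1_B$. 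Since $B$ was arbitrary, $\mby^n \xrightarrow{L^p} \mby$ for $\sigma$-algebras.

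Next I would lift this to filtrations. If $\mby^n \xrightarrow{L^q} \mby$ as filtrations, then by definition $\mby^n_t \xrightarrow{L^q} \mby_t$ for every $t \in I$; by the $\sigma$-algebra case just proved, $\mby^n_t \xrightarrow{L^p} \mby_t$ for every $t \in I$, which is exactly the assertion that $\mby^n \xrightarrow{L^p} \mby$ as filtrations. This is the whole argument.

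Strictly speaking, one should note that convergence on a probability space and the normalization $\mathbb{P}(\Omega) = 1$ are what make the monotonicity of $L^p$ norms in $p$ available; that is why the statement is restricted to $1 \le p \le q$ rather than to arbitrary positive exponents (for $p \le q$ the inclusion $L^q \subset L^p$ and the norm bound hold; in the definition the authors allow general $p > 0$, but the Lyapunov bound still applies for exponents below $1$ using the concavity argument, so one could even phrase it for $0 < p \le q$). There is no real obstacle here: the only mild point of care is to make sure one invokes the equivalent formulation (i) of the definition, where the relevant random variables $1_B$ are bounded, so that the $L^q \to L^p$ passage is immediate and does not require any uniform integrability input. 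I would present the proof in three sentences along exactly these lines.
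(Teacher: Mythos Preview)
Your argument is correct and is exactly the standard one: the monotonicity $\lVert \cdot \rVert_p \le \lVert \cdot \rVert_q$ on a probability space, applied to $\mathbb{E}[1_B\mid \mby^n]-1_B$ in condition~(i) of the definition, gives the $\sigma$-algebra statement, and the filtration statement follows pointwise in $t$.

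There is nothing to compare: the paper states this proposition without proof, treating it as an immediate consequence of the corresponding property for random variables. Your write-up supplies precisely the one-line justification the authors omitted, and your remark about using formulation~(i) so that the random variables involved are bounded is a sensible point of hygiene (though not strictly necessary, since formulation~(ii) with $Y\in L^p(\mby)\supset L^q(\mby)$ would also work directly).
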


\begin{remark}[Weak convergence] Weak convergence in the sense of 
\cite{CMS}, \cite{Hoover1991}, and \cite{KP} 
corresponds to requesting convergence in probability instead of $L^p$ in the previous definition.
Weak convergence is therefore weaker than convergence of $\sigma$-algebras in $L^p$ for any $p\geq 1$.
\end{remark}

\begin{proposition}\label{prop-VHn-providentiel-martingale}
For every \textbf{non-decreasing} sequence of $\sigma$-algebras $(\mby^n)_{n\geq 1}$ 
and every $p\geq 1$ we have 
$$\mby_n\xrightarrow[n\to\infty]{L^p}\mathcal{\bigvee}_{n\in\mathbb{N}}\mby_n.$$
\end{proposition}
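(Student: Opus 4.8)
The plan is to recognize the statement as a martingale convergence theorem in disguise. Fix $t\in I$ and $p\ge 1$, and let $\mathcal{Y}_\infty := \bigvee_{n\in\mathbb{N}}\mathcal{Y}_n$. By condition (ii) in the definition of $L^p$-convergence of $\sigma$-algebras, it suffices to show that for every $Y\in L^p(\mathcal{Y}_\infty,\mathbb{P})$ we have $\mathbb{E}[Y\mid\mathcal{Y}_n]\xrightarrow[n\to\infty]{L^p}Y$. First I would observe that, since $(\mathcal{Y}_n)_{n\ge 1}$ is non-decreasing, the process $(\mathbb{E}[Y\mid\mathcal{Y}_n])_{n\ge 1}$ is a uniformly integrable martingale with respect to the discrete filtration $(\mathcal{Y}_n)_n$, so by L\'evy's upward theorem it converges a.s.\ and in $L^1$ to $\mathbb{E}[Y\mid\mathcal{Y}_\infty]=Y$ (using that $Y$ is already $\mathcal{Y}_\infty$-measurable).

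The remaining point is to upgrade $L^1$-convergence to $L^p$-convergence. Here the main idea is a standard truncation-plus-contraction argument. Given $\varepsilon>0$, choose a bounded $\mathcal{Y}_\infty$-measurable $Y'$ with $\|Y-Y'\|_p<\varepsilon$ (bounded functions are dense in $L^p$). Then decompose
\[
\|\mathbb{E}[Y\mid\mathcal{Y}_n]-Y\|_p
\le \|\mathbb{E}[Y-Y'\mid\mathcal{Y}_n]\|_p
+\|\mathbb{E}[Y'\mid\mathcal{Y}_n]-Y'\|_p
+\|Y'-Y\|_p .
\]
The first term is at most $\|Y-Y'\|_p<\varepsilon$ by the conditional Jensen / $L^p$-contraction property of conditional expectation, and the third term is $<\varepsilon$ by choice of $Y'$. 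For the middle term, $(\mathbb{E}[Y'\mid\mathcal{Y}_n])_n$ is uniformly bounded (by $\|Y'\|_\infty$) and converges a.s.\ to $Y'$ by L\'evy's theorem, so by bounded convergence $\|\mathbb{E}[Y'\mid\mathcal{Y}_n]-Y'\|_p\to 0$ as $n\to\infty$. Hence $\limsup_n\|\mathbb{E}[Y\mid\mathcal{Y}_n]-Y\|_p\le 2\varepsilon$, and since $\varepsilon$ was arbitrary this limit is zero.

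The step I expect to require the most care is the interchange justifying the middle term: one must be sure that the a.s.\ convergence provided by the martingale convergence theorem, combined with a uniform bound, legitimately yields $L^p$-convergence — this is exactly where the boundedness of the truncation $Y'$ is used, via the dominated convergence theorem with dominating function $2\|Y'\|_\infty$. Everything else (the martingale property of $(\mathbb{E}[Y\mid\mathcal{Y}_n])_n$, the contraction property, density of bounded functions) is routine. Finally, since the argument works for every fixed $t\in I$, the weak convergence $\mathcal{Y}^n\xrightarrow[n\to\infty]{L^p}\mathcal{Y}_\infty$ of filtrations follows by definition.
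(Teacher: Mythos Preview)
Your proof is correct and follows the same approach as the paper's: recognize $(\mathbb{E}[Y\mid\mathcal{Y}_n])_n$ as a closed martingale and invoke the martingale convergence theorem. The paper simply cites the $L^p$ version of that theorem directly (for $Y\in L^p$, a closed martingale converges a.s.\ and in $L^p$), so your truncation-plus-contraction step, while valid, is superfluous; also note that the proposition concerns a single non-decreasing sequence of $\sigma$-algebras rather than filtrations, so the ``fix $t\in I$'' at the beginning and the filtration remark at the end are extraneous.
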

\begin{proof}
Let $\mby:=\bigvee_{n\in\mathbb{N}}\mby_n$ and consider 
$Y\in L^p(\mby,\mathbb{P}),p\geq 1$. $(\mathbb{E}[Y|\mby^n])_{n\geq1}$
 is a closed (and uniformly integrable) martingale which converges to $\mathbb{E}[Y|\mby]$ 
 a.s. and in $L^p$.
\end{proof}

\noindent
This is also the consequence of the following more general property.

\begin{proposition} Let $\mby$ be a $\sigma$-algebra and 
$(\mby^n)_{n\geq1},(\mathcal{Z}^n)_{n\geq1}$ two sequences of $\sigma$-algebras.
\begin{center}
If $\mby^n\xrightarrow[n\to\infty]{L^2}\mby$ and $\mby^n\subset\mathcal{Z}^n$ 
then $\mathcal{Z}^n\xrightarrow[n\to\infty]{L^2}\mby$.
\end{center}
\end{proposition}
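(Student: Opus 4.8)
The statement to prove is: if $\mathcal{Y}^n \xrightarrow{L^2} \mathcal{Y}$ and $\mathcal{Y}^n \subset \mathcal{Z}^n$, then $\mathcal{Z}^n \xrightarrow{L^2} \mathcal{Y}$. The plan is to verify condition (ii) of the definition: for every $Y \in L^2(\mathcal{Y}, \mathbb{P})$, show that $\mathbb{E}[Y \mid \mathcal{Z}^n] \to Y$ in $L^2$. The key geometric fact is that in the Hilbert space $L^2$, conditional expectation onto a sub-$\sigma$-algebra is orthogonal projection onto the corresponding closed subspace, and $\mathcal{Y}^n \subset \mathcal{Z}^n$ means the projection onto $L^2(\mathcal{Z}^n)$ factors through (or at least is ``closer than'') the projection onto $L^2(\mathcal{Y}^n)$.

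First I would fix $Y \in L^2(\mathcal{Y}, \mathbb{P})$ and write the elementary decomposition using the tower property: since $\mathcal{Y}^n \subset \mathcal{Z}^n$, we have $\mathbb{E}[\,\mathbb{E}[Y \mid \mathcal{Z}^n]\mid \mathcal{Y}^n] = \mathbb{E}[Y \mid \mathcal{Y}^n]$. The crucial inequality is then
$$
\|Y - \mathbb{E}[Y\mid\mathcal{Z}^n]\|_2^2 + \|\mathbb{E}[Y\mid\mathcal{Z}^n] - \mathbb{E}[Y\mid\mathcal{Y}^n]\|_2^2 \;\leq\; \|Y - \mathbb{E}[Y\mid\mathcal{Y}^n]\|_2^2,
$$
which follows from the Pythagorean identity for nested orthogonal projections $\mathrm{proj}_{L^2(\mathcal{Y}^n)} = \mathrm{proj}_{L^2(\mathcal{Y}^n)}\circ \mathrm{proj}_{L^2(\mathcal{Z}^n)}$ applied to $Y - \mathbb{E}[Y\mid\mathcal{Y}^n]$ — or, more elementarily, from expanding $\|Y - \mathbb{E}[Y\mid\mathcal{Y}^n]\|_2^2 = \|Y - \mathbb{E}[Y\mid\mathcal{Z}^n]\|_2^2 + \|\mathbb{E}[Y\mid\mathcal{Z}^n] - \mathbb{E}[Y\mid\mathcal{Y}^n]\|_2^2$ together with the cross-term vanishing because $\mathbb{E}[Y\mid\mathcal{Z}^n] - \mathbb{E}[Y\mid\mathcal{Y}^n]$ is $\mathcal{Z}^n$-measurable and $Y - \mathbb{E}[Y\mid\mathcal{Z}^n]$ is $L^2$-orthogonal to $L^2(\mathcal{Z}^n)$.

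From this inequality, $\|Y - \mathbb{E}[Y\mid\mathcal{Z}^n]\|_2^2 \leq \|Y - \mathbb{E}[Y\mid\mathcal{Y}^n]\|_2^2$, and the right-hand side tends to $0$ as $n \to \infty$ precisely because $\mathcal{Y}^n \xrightarrow{L^2} \mathcal{Y}$ (condition (ii) of the definition applied to $Y \in L^2(\mathcal{Y})$). Hence $\mathbb{E}[Y\mid\mathcal{Z}^n] \to Y$ in $L^2$, which is exactly what condition (ii) requires for $\mathcal{Z}^n \xrightarrow{L^2} \mathcal{Y}$.

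I do not anticipate a serious obstacle here; the proof is a one-line Hilbert-space argument. The only mild subtlety worth stating carefully is the cross-term cancellation (equivalently, the contraction property of conditional expectation in $L^2$), and — should one want to avoid any appeal to a ``limiting $\sigma$-algebra'' structure — noting that $\mathcal{Y}$ itself need not contain $\mathcal{Y}^n$ or be contained in $\mathcal{Z}^n$; the argument only uses that $Y$ is $\mathcal{Y}$-measurable and square-integrable, so that the defining convergence for $\mathcal{Y}^n$ applies to it. This also explains why the hypothesis is stated for the fixed limit $\mathcal{Y}$ rather than requiring $\mathcal{Y} \subset \mathcal{Z}^n$.
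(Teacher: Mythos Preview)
Your proof is correct and is essentially the same as the paper's: both fix $Y\in L^2(\mby,\mathbb{P})$ and derive the key inequality $\lVert Y-\mathbb{E}[Y\mid\mathcal{Z}^n]\rVert_{L^2}\leq\lVert Y-\mathbb{E}[Y\mid\mby^n]\rVert_{L^2}$ from the orthogonal-projection nature of conditional expectation in $L^2$. The paper obtains this in one line by invoking the minimizing property of $\mathbb{E}[Y\mid\mathcal{Z}^n]$ over $L^2(\mathcal{Z}^n)$ (and noting that $\mathbb{E}[Y\mid\mby^n]\in L^2(\mathcal{Z}^n)$), whereas you derive the same inequality via the Pythagorean identity for nested projections; the content is identical.
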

\begin{proof} Let $Y\in L^2(\mby,\mathbb{P})$. Since $\mathbb{E}[Y|\mathcal{Z}^n]$ minimizes 
$\lVert U-Y\rVert_{L^2}$ over all $\mathcal{Z}^n$-measurable random variables $U\in L^2(\mathcal{Z}^n,\mathbb{P})$, we have
$$\lVert\mathbb{E}[Y|\mby^n]-Y\lVert_{L^2}
\geq
\lVert\mathbb{E}[Y|\mathcal{Z}^n]-Y\rVert_{L^2}.$$
\end{proof}

\begin{corollary}\label{cor-doncDiscRC}
Let $\check{\mby}$ be a filtration, $(\check{\mby}^n)_{n\geq1}$ a sequence of filtrations and 
consider the sequence of right-continuous filtrations defined by 
${\mby_t^n}:=\bigcap_{u>t}\check{\mby}_u^n,t\in I$. 
\begin{center}
If $\check{\mby}^n\xrightarrow[n\to\infty]{L^2}\check{\mby}$ then 
${\mby^n}\xrightarrow[n\to\infty]{L^2}\check{\mby}$.
\end{center}
\end{corollary}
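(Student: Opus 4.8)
The plan is to deduce this directly from the preceding Proposition, which states that if $\mby^n \xrightarrow{L^2} \mby$ and $\mby^n \subset \mathcal{Z}^n$, then $\mathcal{Z}^n \xrightarrow{L^2} \mby$. The key observation is that for each fixed $t \in I$ we have, by construction, $\check{\mby}^n_t \subset \bigcap_{u>t}\check{\mby}^n_u = \mby^n_t$. Thus I would set $\mathcal{Z}^n := \mby^n_t$ and use $\check{\mby}^n_t$ in the role of the converging sequence. Since the hypothesis $\check{\mby}^n \xrightarrow{L^2} \check{\mby}$ means precisely that $\check{\mby}^n_t \xrightarrow{L^2} \check{\mby}_t$ for every $t \in I$, applying the Proposition with $\mby^n \rightsquigarrow \check{\mby}^n_t$, $\mby \rightsquigarrow \check{\mby}_t$, and $\mathcal{Z}^n \rightsquigarrow \mby^n_t$ yields $\mby^n_t \xrightarrow{L^2} \check{\mby}_t$. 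As this holds for every $t \in I$, by the definition of weak convergence of filtrations in $L^2$ we conclude $\mby^n \xrightarrow{L^2} \check{\mby}$.

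The argument is essentially a one-line application, so there is no serious obstacle; the only thing to be careful about is the direction of the inclusion. One must check that right-continuous regularization \emph{enlarges} each $\sigma$-algebra at a fixed time, i.e. $\check{\mby}^n_t \subset \mby^n_t$, which is immediate since $t$ itself is not in the index set $\{u : u > t\}$ but the intersection includes $\check{\mby}^n_u$ for $u$ arbitrarily close to $t$ from above, each of which contains $\check{\mby}^n_t$ by monotonicity of the filtration $\check{\mby}^n$. (Note that no right-continuity of $\check{\mby}^n$ or $\check{\mby}$ is needed, nor completeness — the Proposition is stated for arbitrary $\sigma$-algebras.)

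One small subtlety worth flagging in the writeup: the conclusion is stated with limit $\check{\mby}$ rather than the right-continuous regularization of $\check{\mby}$, which is consistent with the earlier remark that limits of $\sigma$-algebras in $L^p$ are not unique and any sub-$\sigma$-algebra of a limit is again a limit. Since $\check{\mby}_t \subset \bigcap_{u>t}\check{\mby}_u$, convergence to the latter would also follow, but the weaker stated conclusion is exactly what the Proposition delivers without extra hypotheses. So I would simply write: fix $t \in I$; from $\check{\mby}^n_t \subset \mby^n_t$ and $\check{\mby}^n_t \xrightarrow{L^2} \check{\mby}_t$, the Proposition gives $\mby^n_t \xrightarrow{L^2} \check{\mby}_t$; let $t$ vary.
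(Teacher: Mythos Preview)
Your proposal is correct and matches the paper's intended approach: the corollary is stated without proof precisely because it is an immediate application of the preceding Proposition, using at each fixed $t\in I$ the inclusion $\check{\mby}^n_t\subset\bigcap_{u>t}\check{\mby}^n_u=\mby^n_t$. Your care about the direction of the inclusion and the non-uniqueness of limits is appropriate but, as you note, not an obstacle.
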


\begin{proposition}\label{prop-propConv}
Let $\mby$ be a $\sigma$-algebra, $(\mby^n)_{n\geq1}$ a sequence of $\sigma$-algebras and 
$(Z^n)_{n\geq1}$ a sequence of random variables.
\begin{center}
If $\mby^n\xrightarrow[n\to\infty]{L^1}\mby$ and $Z^n\xrightarrow[n\to\infty]{L^1} Z$,
then $\mathbb{E}[Z^n|\mby^n]\xrightarrow[n\to\infty]{L^1}\mathbb{E}[Z|\mby]$.
\end{center}
\end{proposition}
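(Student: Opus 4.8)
The plan is to reduce the statement to the two convergences in the hypotheses acting separately, by inserting the natural intermediate term $\mathbb{E}[Z\mid\mby^n]$ and applying the triangle inequality
\[
\|\mathbb{E}[Z^n\mid\mby^n]-\mathbb{E}[Z\mid\mby]\|_{L^1}\le\|\mathbb{E}[Z^n-Z\mid\mby^n]\|_{L^1}+\|\mathbb{E}[Z\mid\mby^n]-\mathbb{E}[Z\mid\mby]\|_{L^1}.
\]
The first summand isolates the hypothesis $Z^n\xrightarrow{L^1}Z$, and the second isolates $\mby^n\xrightarrow{L^1}\mby$. A convenient preliminary reduction for the second summand is to assume $Z$ bounded: truncating $Z$ at level $K$ and using the $L^1$-contraction of conditional expectation, the error introduced in both $\mathbb{E}[Z\mid\mby^n]$ and $\mathbb{E}[Z\mid\mby]$ is bounded \emph{uniformly in} $n$ by $2\|Z-Z_K\|_{L^1}$, which is small for $K$ large; so it suffices to treat bounded integrands and then let $K\to\infty$.

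The first summand is routine. Since conditional expectation is an $L^1$-contraction (apply conditional Jensen to the convex function $|\cdot|$), one has $\|\mathbb{E}[Z^n-Z\mid\mby^n]\|_{L^1}\le\|Z^n-Z\|_{L^1}$, and this tends to $0$ by hypothesis, uniformly in the choice of $\sigma$-algebra $\mby^n$. No property of the $\mby^n$ beyond the contraction is used here.

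The substance is the second summand, where I would write $Z=Y+R$ with $Y:=\mathbb{E}[Z\mid\mby]\in L^1(\mby)$ and $R:=Z-Y$ satisfying $\mathbb{E}[R\mid\mby]=0$, so that
\[
\mathbb{E}[Z\mid\mby^n]-\mathbb{E}[Z\mid\mby]=\big(\mathbb{E}[Y\mid\mby^n]-Y\big)+\mathbb{E}[R\mid\mby^n].
\]
The first bracket tends to $0$ in $L^1$ immediately from the definition of $\mby^n\xrightarrow{L^1}\mby$ (condition (ii) applied to $Y\in L^1(\mby)$). The hard part will be the residual $\mathbb{E}[R\mid\mby^n]$: although $\mathbb{E}[R\mid\mby]=0$, the defining condition of the convergence only controls $\mby$-measurable test functions, whereas $R$ is exactly the component of $Z$ orthogonal to $\mby$. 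Forcing $\mathbb{E}[R\mid\mby^n]\to0$ in $L^1$ is therefore the crux of the argument: it requires the convergence $\mby^n\to\mby$ to control arbitrary integrable integrands, i.e. to yield $\mathbb{E}[\,\cdot\mid\mby^n]\to\mathbb{E}[\,\cdot\mid\mby]$ and not merely $\mathbb{E}[\,\cdot\mid\mby^n]\to(\cdot)$ for $\mby$-measurable arguments. I would attempt to close this by approximating $R$ in $L^1$ by functions measurable for the limiting structure and exploiting the closed-martingale estimates used in the preceding propositions, and I expect this passage from $\mby$-measurable to general integrands to be where all the difficulty — and any strengthening of the notion of convergence that the argument genuinely needs — is concentrated.
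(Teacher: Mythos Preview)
Your decomposition is exactly the one the paper uses: split off $\mathbb{E}[Z^n-Z\mid\mby^n]$ via the $L^1$-contraction of conditional expectation, and then claim $\mathbb{E}[Z\mid\mby^n]\to\mathbb{E}[Z\mid\mby]$ in $L^1$ ``by convergence of $\sigma$-algebras''. The paper asserts this last step without further argument; you, more carefully, isolate it as the crux and observe that the definition of $\mby^n\xrightarrow{L^1}\mby$ only controls $\mby$-measurable integrands, so the residual $\mathbb{E}[R\mid\mby^n]$ with $R=Z-\mathbb{E}[Z\mid\mby]$ is \emph{not} covered. You are right to be worried, and your proposed closing move (approximate $R$ by $\mby$-measurable functions) cannot succeed: $R$ is precisely the part of $Z$ with zero $\mby$-projection, so it has no useful $\mby$-measurable approximants.

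In fact the gap is fatal: the statement is false as written. Take $\mby$ to be the trivial $\sigma$-algebra; the paper itself remarks that \emph{any} sequence $(\mby^n)$ converges in $L^p$ to the trivial $\sigma$-algebra. Choose a non-constant $Z\in L^1$, set $Z^n\equiv Z$ and $\mby^n:=\sigma(Z)$ for all $n$. Then $\mby^n\xrightarrow{L^1}\mby$ and $Z^n\xrightarrow{L^1}Z$, yet $\mathbb{E}[Z^n\mid\mby^n]=Z$ while $\mathbb{E}[Z\mid\mby]=\mathbb{E}[Z]$, and these do not agree in $L^1$. So neither your argument nor the paper's can be completed in this generality. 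The places where the paper actually invokes this proposition (Theorems~\ref{thm-stabDec} and~\ref{thm-MCPlus}) involve a non-decreasing sequence with $\mby=\bigvee_n\mby^n$, where the conclusion \emph{does} hold for arbitrary $Z\in L^1$ by the martingale convergence theorem; that extra hypothesis is what is really doing the work.
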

\begin{proof}
\begin{equation*}
\Big| \mathbb{E}[Z|\mby]
-\mathbb{E}[Z^n|\mby^n] \Big|
\leq 
\Big|\mathbb{E}[Z|\mby]-\mathbb{E}[Z|\mby^n]\Big|
+ \mathbb{E}[\Big| Z-Z_n\Big| |\mby^n]
\end{equation*}
The first term converges to zero because of convergence of filtrations, and the second term converges to $0$ 
in $L^1$ by the tower property of conditional expectations.
\end{proof}

\begin{proposition}\label{prop-YnvZn-to-YvZ} 
Let $\mby,\mathcal{Z},(\mby^n)_{n\geq 1},(\mathcal{Z}^n)_{n\geq1}$ be 
$\sigma$-algebras and $p\geq 1$. If $\mby^n\xrightarrow[L^p]{n\to\infty}\mby$ and 
$\mathcal{Z}^n\xrightarrow[L^p]{n\to\infty}\mathcal{Z}$, then
$$\mby^n\vee\mathcal{Z}^n\xrightarrow[L^p]{n\to\infty}\mby\vee\mathcal{Z}.$$
\end{proposition}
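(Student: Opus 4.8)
The plan is to test convergence on a dense subclass of $L^p(\mby\vee\mathcal{Z},\mathbb{P})$ and then propagate it to the whole space using the fact that, uniformly in $n$, the conditional expectation $\mathbb{E}[\,\cdot\,|\,\mby^n\vee\mathcal{Z}^n]$ is an $L^p$-contraction (Jensen's inequality, valid since $p\ge 1$). This is the same density-plus-uniform-contraction scheme used repeatedly in this section; the only genuinely new input is the identification of a convenient $\mby^n\vee\mathcal{Z}^n$-measurable approximant for the ``product'' test functions.

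First I would establish the convergence for the elementary random variables $W=1_{A\cap B}$ with $A\in\mby$ and $B\in\mathcal{Z}$. Put $X^n:=\mathbb{E}[1_A\,|\,\mby^n]$ and $Y^n:=\mathbb{E}[1_B\,|\,\mathcal{Z}^n]$; these are $[0,1]$-valued, $\mby^n$- respectively $\mathcal{Z}^n$-measurable, and by hypothesis $X^n\to 1_A$ and $Y^n\to 1_B$ in $L^p$. Since both sequences are uniformly bounded, the identity $X^nY^n-1_A1_B=(X^n-1_A)Y^n+1_A(Y^n-1_B)$ shows $X^nY^n\to 1_{A\cap B}$ in $L^p$. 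The crucial point is that $X^nY^n$ is $\mby^n\vee\mathcal{Z}^n$-measurable, so $\mathbb{E}[X^nY^n\,|\,\mby^n\vee\mathcal{Z}^n]=X^nY^n$, and therefore
\[
\bigl\|\mathbb{E}[1_{A\cap B}\,|\,\mby^n\vee\mathcal{Z}^n]-1_{A\cap B}\bigr\|_{L^p}
\le
\bigl\|\mathbb{E}[1_{A\cap B}-X^nY^n\,|\,\mby^n\vee\mathcal{Z}^n]\bigr\|_{L^p}+\|X^nY^n-1_{A\cap B}\|_{L^p}
\le 2\,\|X^nY^n-1_{A\cap B}\|_{L^p},
\]
where the last inequality uses the contraction property; the right-hand side tends to $0$.

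Next I would pass from these indicators to general $W$. The family $\{A\cap B:A\in\mby,\ B\in\mathcal{Z}\}$ is a $\pi$-system generating $\mby\vee\mathcal{Z}$, so by the functional monotone class theorem the $L^p$-closure of the linear span $\mathcal{D}$ of $\{1_{A\cap B}\}$ contains every bounded $\mby\vee\mathcal{Z}$-measurable function, hence equals $L^p(\mby\vee\mathcal{Z},\mathbb{P})$. By linearity the previous step gives $\mathbb{E}[V\,|\,\mby^n\vee\mathcal{Z}^n]\to V$ in $L^p$ for every $V\in\mathcal{D}$. Finally, given $W\in L^p(\mby\vee\mathcal{Z},\mathbb{P})$ and $\varepsilon>0$, choosing $V\in\mathcal{D}$ with $\|W-V\|_{L^p}<\varepsilon$ and combining the triangle inequality with the contraction bound $\|\mathbb{E}[W-V\,|\,\mby^n\vee\mathcal{Z}^n]\|_{L^p}\le\|W-V\|_{L^p}$ yields
\[
\limsup_{n\to\infty}\bigl\|\mathbb{E}[W\,|\,\mby^n\vee\mathcal{Z}^n]-W\bigr\|_{L^p}\le 2\varepsilon ;
\]
letting $\varepsilon\downarrow 0$ completes the proof.

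I expect the only substantive step to be the first one, and within it the observation that $X^nY^n=\mathbb{E}[1_A|\mby^n]\,\mathbb{E}[1_B|\mathcal{Z}^n]$ is the natural $\mby^n\vee\mathcal{Z}^n$-measurable surrogate for $1_{A\cap B}$; once that is in place, the density reduction and the contraction estimate are routine and parallel the proofs of the preceding propositions.
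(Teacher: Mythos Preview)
Your argument is correct: the product $\mathbb{E}[1_A\mid\mby^n]\,\mathbb{E}[1_B\mid\mathcal{Z}^n]$ is exactly the right $\mby^n\vee\mathcal{Z}^n$-measurable surrogate, and the contraction-plus-density extension is sound. The paper does not give a self-contained proof but simply refers to the core of the proof of Proposition~2.4 in Coquet--M\'emin--Mackevicius (there stated for convergence in probability); your write-up is essentially that argument carried out in $L^p$.
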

\begin{proof}
This is actually the core of the proof of Proposition 2.4 of \cite{CMV} 
where the conclusion is reached for weak convergence.
\end{proof}

\subsection{Stability of semimartingales \label{sub-SMG}}

\begin{theorem}[Stability of the semimartingale property]\label{thm-stabSM} \ \\ 
Let $(\mathcal{G}^n)_{n\geq1}$ be a sequence of filtrations and suppose 
that $M$ is a $\mathcal{G}^n$ semimartingale with decomposition $M=:M^n+A^n$ for every $n\geq1$.

If $\mathcal{G}^n\xrightarrow[n\to\infty]{L^2}\mathcal{G}$ and $\sup_n \mathbb{E}\int_0^T d|A_t^n|<\infty$, 
then $M$ is a $\mathcal{G}$-semimartingale.
\end{theorem}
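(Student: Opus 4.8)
The plan is to identify a candidate $\mathcal{G}$-decomposition of $M$ by extracting a weak limit of the finite-variation parts $A^n$ (and of the martingale parts $M^n$) and then showing that the limits do what they should. First I would exploit the uniform bound $\sup_n \mathbb{E}\int_0^T d|A^n_t| < \infty$: this makes the family $\{A^n\}$ bounded in total variation in $L^1$, so by a weak-compactness argument (e.g. passing to the dual of a suitable space of continuous processes, or using the Koml\'os-type / Dunford--Pettis extraction available under an $L^1$ bound on the total variation measures) one can pass to a subsequence along which $A^n$ converges, in an appropriate weak sense, to a process $A$ of finite variation with $\mathbb{E}\int_0^T d|A_t| \le \liminf_n \mathbb{E}\int_0^T d|A^n_t| < \infty$; a convex-combination (Mazur / Koml\'os) step may be needed to upgrade weak convergence to something usable pathwise, and since $M$ itself is fixed, the martingale parts then converge correspondingly to $M - M_0 - A =: \widetilde M$.

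Next I would show $A$ is $\mathcal{G}$-adapted and $\widetilde M$ is a $\mathcal{G}$-local martingale. Adaptedness of $A_t$ to $\mathcal{G}_t$ should follow because $A^n_t$ is $\mathcal{G}^n_t$-measurable and $\mathcal{G}^n_t \xrightarrow{L^2} \mathcal{G}_t$, using Proposition~\ref{prop-propConv} (if $Z^n := A^n_t \to A_t$ in $L^1$ along the chosen subsequence, then $\mathbb{E}[A_t \mid \mathcal{G}_t] = \lim \mathbb{E}[A^n_t \mid \mathcal{G}^n_t] = \lim A^n_t = A_t$), after first arranging genuine $L^1$-convergence of the relevant random variables via the convex-combination step. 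For the martingale property of $\widetilde M$ I would verify $\mathbb{E}[(\widetilde M_t - \widetilde M_s) G] = 0$ for $G$ a bounded $\mathcal{G}_s$-measurable random variable: write this using $M^n$, namely $\mathbb{E}[(M^n_t - M^n_s)\, G] = 0$ when $G$ is $\mathcal{G}^n_s$-measurable, then replace $G$ by $\mathbb{E}[G \mid \mathcal{G}^n_s]$, which converges to $G$ in $L^2$ by the definition of $L^2$-convergence of filtrations, and pass to the limit using the (subsequential, convex-combination) convergence of $M^n_t$; a localization argument then promotes ``martingale'' to ``local martingale'' if integrability of $M$ is not assumed outright, though for continuous $M$ one may localize at the outset so that all the $M^n_\cdot$ and $A^n_\cdot$ are uniformly integrable on $[0,T]$.

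The main obstacle I expect is the extraction and identification step: an $L^1$ bound on total variation gives only weak-$*$ compactness of the measures $dA^n$, and a priori the weak limit need not be carried by a single process $A$ adapted in the right way, nor need $M^n$ converge in any strong sense — so the delicate part is choosing the right topology (likely working with the associated measures on $[0,T]\times\Omega$ and using a Koml\'os / Mazur argument to obtain convex combinations $\bar A^n$ of the $A^n$ converging, say, in the $\sigma(L^1,L^\infty)$ sense against bounded predictable integrands, with $\bar M^n := M - M_0 - \bar A^n$) and then checking that the two limit objects inherit adaptedness and the (local) martingale property from the prelimit ones via the $L^2$-convergence of filtrations. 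Once both limit processes are correctly identified, $M = M_0 + \widetilde M + A$ is the desired $\mathcal{G}$-semimartingale decomposition, and (by Proposition~\ref{prop-usual-hyp} if necessary) this survives passage to the usual-conditions augmentation of $\mathcal{G}$.
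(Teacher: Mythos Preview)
Your approach has a genuine gap, and it stems from misreading what the hypothesis $\mathcal{G}^n\xrightarrow{L^2}\mathcal{G}$ gives you. That convergence is \emph{one-directional}: it says that every $\mathcal{G}_t$-measurable $Y$ satisfies $\mathbb{E}[Y\mid\mathcal{G}^n_t]\to Y$, but it says nothing about $\mathcal{G}^n_t$-measurable random variables being close to $\mathcal{G}_t$-measurable ones. The $\mathcal{G}^n$ are not assumed nested, nor contained in $\mathcal{G}$; indeed the paper remarks that any sequence of $\sigma$-algebras converges in $L^p$ to the trivial one. So even if you succeed in extracting a limit $A$ of (convex combinations of) the $A^n$, there is no mechanism forcing $A_t$ to be $\mathcal{G}_t$-measurable. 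Your invocation of Proposition~\ref{prop-propConv} to get $\mathbb{E}[A_t\mid\mathcal{G}_t]=A_t$ is circular: that proposition, read through its proof, needs the limit variable to already lie in $L^1(\mathcal{G}_t)$ in order to apply the definition of filtration convergence to the term $|\mathbb{E}[Z\mid\mathcal{G}]-\mathbb{E}[Z\mid\mathcal{G}^n]|$. Concretely, one can have $\mathcal{G}^n\supsetneq\mathcal{G}$ for all $n$ (e.g.\ $\mathcal{G}=\mathcal{F}$ and each $\mathcal{G}^n$ an initial enlargement of $\mathcal{F}$), in which case the $\mathcal{G}$-compensator of $M$ is $0$ while the $A^n$ need not tend to $0$ at all. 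In short, you are trying to prove Theorem~\ref{thm-stabDec} under the weaker hypotheses of Theorem~\ref{thm-stabSM}, and that fails.

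The paper sidesteps the decomposition entirely and proves only that $M$ is a $\mathcal{G}$-quasimartingale, via the Bichteler--Dellacherie criterion: for a simple $\mathcal{G}$-predictable $H=\sum_i h_i 1_{]u_i,u_{i+1}]}$ with $|h_i|\le 1$, set $H^n:=\sum_i\mathbb{E}[h_i\mid\mathcal{G}^n_{u_i}]1_{]u_i,u_{i+1}]}$. This projection uses the filtration convergence in the \emph{correct} direction, so $h_i^n\to h_i$ in $L^2$. Since $H^n$ is simple $\mathcal{G}^n$-predictable and bounded by $1$, the $\mathcal{G}^n$-decomposition gives $|\mathbb{E}[(H^n\!\cdot\! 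M)_T]|\le\sup_n\mathbb{E}\!\int_0^T|dA^n|$, while Cauchy--Schwarz and the $L^2$ convergence of the $h_i^n$ yield $\mathbb{E}[((H-H^n)\!\cdot\! M)_T]\to 0$. Hence $|\mathbb{E}[(H\!\cdot\! M)_T]|\le\sup_n\mathbb{E}\!\int_0^T|dA^n|$ uniformly over such $H$, which is exactly the quasimartingale bound. No compactness, no extraction, and no identification of the limiting compensator are needed.
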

\begin{proof}
We prove the stronger statement that $M$ is a $\mathcal{G}$-quasimartingale (see \cite{PPbook}, Chapter III).
Let $H$ be a simple $\mathcal{G}$-predictable process of the form $H_t:=\sum_{i=1}^ph_i1_{]u_i,u_{i+1}]}(t)$, 
where each $h_i$ is $\mathcal{G}_{u_i}$-measurable, $0\leq u_1 < ... < u_{p+1}=T$, and $\forall i=1...p, |h_i|\leq1$. 
We define $H_t^n:=\mathbb{E}[H_t|\mathcal{G}_t^n] = \sum_{i=1}^ph_i^n1_{]u_i,u_{i+1}]}(t)$, with 
$h_i^n:=\mathbb{E}[h_i|\mathcal{G}_{t_i}^n]$. $H^n$ is a simple $\mathcal{G}^n$-predictable process bounded by 
$1$, hence 
$$|\mathbb{E}[(H^n\cdot M)_T]|
= |\mathbb{E}\int_0^TH_t^ndA_t^n|
\leq \mathbb{E}\int_0^T|dA_t^n|
\leq\sup_n \mathbb{E}\int_0^T|dA_t^n|.$$
Now
$\mathbb{E}[((H-H^n)\cdot M)_T]
= \sum_{i=1}^p \mathbb{E}[(h_i-h_i^n)(M_{u_{i+1}}-M_{u_{i}})]$
and we obtain from the Cauchy-Schwarz inequality:
\begin{align*}
|\mathbb{E}[((H-H^n)\cdot M)_T]|
& \leq 
\sum_{i=1}^p \mathbb{E}[|h_i-h_i^n| |M_{u_{i+1}}-M_{u_{i}}|]\\
& \leq 
\sum_{i=1}^p \sqrt{\mathbb{E}[(h_i-h_i^n)^2] \mathbb{E}[(M_{u_{i+1}}-M_{u_{i}})^2]}\\
& \leq 
\sqrt{\sup_{i=1...p}\mathbb{E}[(h_i-h_i^n)^2]} \sum_{i=1}^p\sqrt{\mathbb{E}[(M_{u_{i+1}}-M_{u_{i}})^2]}.
\end{align*}

We show that the two terms on the right side converge to 0. First, the Cauchy-Schwarz inequality implies
{
$\sum_{i=1}^p\sqrt{\mathbb{E}[(M_{u_{i+1}}-M_{u_{i}})^2]}
\leq
\sqrt{p\sum_{i=1}^p\mathbb{E}[(M_{u_{i+1}}-M_{u_{i}})^2]}<\infty.$
}
Then, given the convergence of $\sigma$-algebras 
$\mathcal{G}_t^n\xrightarrow[n\to\infty]{} \mathcal{G}_t,0\leq t\leq T$ 
and the bound $\mathbb{E}[h_i^2]\leq1<\infty$ we have 
$\sup_{i=1...p}\mathbb{E}[(h_i-h_i^n)^2]\xrightarrow[n\to\infty]{} 0$.
Thus $\mathbb{E}[((H-H^n)\cdot M)_T]\xrightarrow[n\to\infty]{} 0$
so that 
$|\mathbb{E}[(H\cdot M)_T]|
\leq\sup_n\mathbb{E}\int_0^T|dA_t^n|$
which establishes that $M$ is a $\mathcal{G}$-quasimartingale by the Bichteler-Dellacherie characterization (see \cite{PPbook}).
\end{proof}

\noindent
The last theorem shows that the semimartingale property is conserved through the convergence of filtrations in $L^2$, 
as long as the sequence of finite variation terms is absolutely uniformly bounded. The next result proves that if the 
sequence of finite variation terms converges, convergence of filtrations in $L^1$ is sufficient.

\begin{theorem}[Convergence of semimartingale decomposition]\label{thm-stabDec}
Let $(\mathcal{G}^n)_{n\geq1}$ be a sequence of filtrations and suppose that $M$ is a $\mathcal{G}^n$ 
semimartingale with decomposition $M=:M^n+A^n$  for every $n\geq1$.

If $\mathcal{G}^n\xrightarrow[n\to\infty]{L^1}\mathcal{G}$ and 
$\mathbb{E}[\int_0^Td|A_t^n-A_t|]\xrightarrow[n\to\infty]{} 0$ then $M$ is a $\mathcal{G}$-semimartingale 
with decomposition $M=:\widetilde{M}+A$ (where $\widetilde{M}:=M-A$).
\end{theorem}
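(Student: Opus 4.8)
The plan is to manufacture a single localizing sequence, depending on $M$ alone so that it is admissible for $\mathcal{G}$ and for every $\mathcal{G}^n$ simultaneously, to note that along it the stopped $\mathcal{G}^n$-local martingales are genuine $\mathcal{G}^n$-martingales, and then to pass to the limit in $n$ in the martingale identity exactly as in Proposition~\ref{prop-propConv}; the decomposition $M=\widetilde M+A$ will drop out. Here $M$ is continuous and $\mathcal{G}$-adapted, and $A$ is (implicitly) a continuous $\mathcal{G}$-adapted finite variation process. First I would dispose of some bookkeeping: we may assume all the filtrations satisfy the usual hypotheses, since replacing $\mathcal{G}^n$ and $\mathcal{G}$ by their right-continuous regularizations alters neither the decompositions (Proposition~\ref{prop-usual-hyp}) nor the convergence of filtrations (Corollary~\ref{cor-doncDiscRC}); and, by subadditivity of the total variation, $\mathbb{E}\int_0^T d|A^n_t|\le\mathbb{E}\int_0^T d|A_t|+\mathbb{E}\int_0^T d|A^n_t-A_t|$, which is finite for each $n$ and bounded in $n$, so in particular $M$ is already a $\mathcal{G}$-semimartingale by Theorem~\ref{thm-stabSM} (this will also come out of the argument).

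Next I would introduce, for $k\ge1$, the stopping time $\tau_k:=\inf\{t\ge0:|M_t|\ge k\}\wedge T$. Because $M$ is continuous and adapted to $\mathcal{G}$ and to each $\mathcal{G}^n$, $\tau_k$ is at once a $\mathcal{G}$- and a $\mathcal{G}^n$-stopping time, $|M_{t\wedge\tau_k}|\le k$ for all $t$, and $\tau_k\uparrow T$ with $\tau_k=T$ for all $k$ large enough (pointwise in $\omega$), since $\sup_{[0,T]}|M|<\infty$ a.s. For fixed $n,k$, the process $(M^n)^{\tau_k}$ is then a continuous $\mathcal{G}^n$-local martingale with $\sup_{t\le T}|(M^n)^{\tau_k}_t|\le k+\int_0^T d|A^n_s|$, and the dominating random variable is integrable by the previous paragraph; a continuous local martingale dominated by an integrable random variable is a true (uniformly integrable) martingale, so $(M^n)^{\tau_k}$ is a $\mathcal{G}^n$-martingale on $[0,T]$. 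This is the step I expect to be the crux: the localization must be built from $M$ alone so as to work for all the filtrations at once, and the integrability of the total variations of the $A^n$ is precisely what upgrades the stopped $\mathcal{G}^n$-local martingales to honest martingales, which is what makes the limiting argument below legitimate.

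Then I would transfer the martingale property to $\mathcal{G}$. Fix $0\le s\le t\le T$ and $B\in\mathcal{G}_s$, and set $1^n_B:=\mathbb{E}[1_B\mid\mathcal{G}^n_s]$, a $[0,1]$-valued $\mathcal{G}^n_s$-measurable random variable. Testing the $\mathcal{G}^n$-martingale $(M^n)^{\tau_k}$ against $1^n_B$ and using $M^n=M-A^n$ yields
$$\mathbb{E}\big[(M_{t\wedge\tau_k}-M_{s\wedge\tau_k})\,1^n_B\big]=\mathbb{E}\big[(A^n_{t\wedge\tau_k}-A^n_{s\wedge\tau_k})\,1^n_B\big].$$
Letting $n\to\infty$: the left side tends to $\mathbb{E}[(M_{t\wedge\tau_k}-M_{s\wedge\tau_k})1_B]$ because the integrand is bounded by $2k$ and $1^n_B\to1_B$ in $L^1$ (as $\mathcal{G}^n_s\to\mathcal{G}_s$ in $L^1$); the right side tends to $\mathbb{E}[(A_{t\wedge\tau_k}-A_{s\wedge\tau_k})1_B]$ because $A^n_{t\wedge\tau_k}-A^n_{s\wedge\tau_k}\to A_{t\wedge\tau_k}-A_{s\wedge\tau_k}$ in $L^1$ (using $\mathbb{E}\int_0^T d|A^n_s-A_s|\to0$) and, writing $\mathbb{E}[Z^n 1^n_B]=\mathbb{E}[\mathbb{E}[Z^n\mid\mathcal{G}^n_s]\,1_B]$ with $Z^n:=A^n_{t\wedge\tau_k}-A^n_{s\wedge\tau_k}$, Proposition~\ref{prop-propConv} applies together with the boundedness of $1_B$. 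Hence, for every $B\in\mathcal{G}_s$,
$$\mathbb{E}\big[\big((M-A)_{t\wedge\tau_k}-(M-A)_{s\wedge\tau_k}\big)\,1_B\big]=0.$$

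Finally I would conclude. The process $(M-A)^{\tau_k}$ is continuous, $\mathcal{G}$-adapted, and integrable (bounded by $k+\int_0^T d|A_s|\in L^1$), and the last display says precisely that it is a $\mathcal{G}$-martingale on $[0,T]$; since $\tau_k\uparrow T$ a.s. and equals $T$ eventually, $(\tau_k)_{k\ge1}$ reduces $M-A$, so $\widetilde M:=M-A$ is a $\mathcal{G}$-local martingale. Thus $M=\widetilde M+A$ presents $M$ as a continuous $\mathcal{G}$-local martingale plus a continuous $\mathcal{G}$-adapted finite variation process, both null at zero, and by uniqueness of such a decomposition for a continuous semimartingale this is its $\mathcal{G}$-semimartingale decomposition, as claimed. (If preferred, one may instead quote that $M$ is a $\mathcal{G}$-semimartingale from Theorem~\ref{thm-stabSM} and read the above as only identifying its finite variation part; the one point worth a sentence in the write-up is the $\mathcal{G}$-adaptedness of $A$, which is part of the data here and is automatic whenever $\mathcal{G}^n_t\subseteq\mathcal{G}_t$.)
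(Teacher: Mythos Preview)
Your argument is correct and follows essentially the same route as the paper's: localize, then pass to the limit in the martingale identity via Proposition~\ref{prop-propConv}, using the $L^1$ convergence of filtrations together with $A^n_t\to A_t$ in $L^1$. The paper does this in one line, writing ``by a localization argument we can assume without loss of generality that $M$ and $A$ are bounded'' and then taking the limit of $\mathbb{E}[(M_t-A^n_t)-(M_s-A^n_s)\mid\mathcal{G}^n_s]=0$ directly; your version is more explicit in building $\tau_k$ from $M$ alone so that it is simultaneously a stopping time for every $\mathcal{G}^n$ and for $\mathcal{G}$, and in testing against $1_B^n=\mathbb{E}[1_B\mid\mathcal{G}^n_s]$.

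One small point worth flagging: in your bookkeeping you use $\mathbb{E}\int_0^T d|A_t|<\infty$ to deduce that $\int_0^T d|A^n_t|$ is integrable (and hence that $(M^n)^{\tau_k}$ is a true martingale), but integrability of the total variation of $A$ is not among the hypotheses. This is exactly the step the paper's ``localize so that $A$ is bounded'' is meant to absorb, and the paper is equally terse about it; one clean fix is to intersect $\tau_k$ with $\sigma_k:=\inf\{t:\int_0^t d|A_s|\ge k\}$ and observe that on $[0,\tau_k\wedge\sigma_k]$ one has the integrable domination $|M^n|\le 2k+\int_0^T d|A^n-A|$. Your aside invoking Theorem~\ref{thm-stabSM} would also need $L^2$ rather than $L^1$ convergence of filtrations, but as you note this is inessential since the semimartingale property comes out of the argument anyway.
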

\begin{proof}
Since $A^n$ is $\mathcal{G}$-predictable $\forall n\geq1$ its limit $A$ is also $\mathcal{G}$-predictable. 
Moreover $\int_0^T|dA_t|\leq\int_0^T|dA_t^n-dA_t|+\int_0^T|dA_t^n|<\infty$ so $A$ has finite variations. 
We also have $\forall t\in[0,T],A_t^n\xrightarrow[n\to\infty]{L^1}A_t$ and by a localization argument we can 
assume without loss of generality that $M$ and $A$ are bounded.Hence according to Proposition \ref{prop-propConv} 
we also have for every $s\leq t$
$$0=\mathbb{E}[(M_t-A_t^n)-(M_s-A_s^n)|\mathcal{G}_s^n]\xrightarrow[n\to\infty]{L^1}\mathbb{E}[(M_t-A_t)-(M_s-A_s)|\mathcal{G}_s],$$
which shows that $M-A$ is a $\mathcal{G}$-martingale.
\end{proof}

\subsection{Convergence of the information drifts\label{sub-ConvID}}

We now study more specifically the convergence of square-integrable information drifts relying on the Hilbert structure 
of the subset $\mathcal{S}^2(M)$ containing the $\mathcal{F}$-predictable processes in 
$L^2(\Omega \times [0, T],\mathcal{A}\otimes\mathcal{B},d\mathbb{P}\times d[M,M])$,
where we will denote $\mathcal{P}$ the predictable 
$\sigma$-algebra.
We show that a uniform bound on 
the norms of the successive information drifts ensures 
the existence of a limit which is the information drift 
of the limit filtration, extending the original argument in \cite{Shannon} 
to our general convergence of filtrations 
and the setting of an expansion with a stochastic process.

\bigskip

\begin{theorem}[Convergence of information drifts] \label{thm-convID} 
Let $(\mathcal{G}^n)_{n\geq1}$ be a \textbf{non-decreasing} sequence of filtrations and suppose that $M$ is a 
$\mathcal{G}^n$-semimartingale with decomposition $M=:M^n+\int_0^.\alpha_s^nd[M,M]_s$  for every $n\geq1$ 
for some process $\alpha^n\in \mathcal{S}(\mathcal{G}^n)$. \\
If $\mathcal{G}^n\xrightarrow[n\to\infty]{L^2}\mathcal{G}$ and 
$\sup_{n\geq1}\mathbb{E}\int_0^T(\alpha^n_u)^2d[M,M]_u<\infty$  
then $M$ is a ${\mathcal{G}}$-semimartingale with decomposition 
$$M=:\widetilde{M}+\int_0^.\alpha_sd[M,M]_s,$$
where $\alpha\in\mathcal{S}^2(\mathcal{G},M)$.
\end{theorem}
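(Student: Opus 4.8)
The plan is to exploit the Hilbert-space structure of $\mathcal{S}^2(\mathcal{G},M)$ together with the stability results already established. First I would observe that the hypothesis $\sup_n \mathbb{E}\int_0^T (\alpha^n_u)^2 d[M,M]_u < \infty$ says precisely that the sequence $(\alpha^n)$ is bounded in the Hilbert space $L^2(\Omega\times[0,T],\mathcal{P}\otimes\cdots,d\mathbb{P}\times d[M,M])$. By weak sequential compactness of bounded sets in a Hilbert space, there is a subsequence $(\alpha^{n_k})$ converging weakly to some limit $\alpha$ in that space; and since each $\alpha^{n_k}$ is $\mathcal{G}^{n_k}$-predictable with $\mathcal{G}^{n_k}\subset\mathcal{G}$ (using that the sequence is non-decreasing with $\mathcal{G}^n\uparrow$ inside $\mathcal{G}$, which is consistent with Proposition~\ref{prop-VHn-providentiel-martingale}), the weak limit $\alpha$ can be taken to be $\mathcal{G}$-predictable; one may need to pass to convex combinations (Mazur's lemma) to land the limit in the predictable subspace, but that is routine. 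So $\alpha\in\mathcal{S}^2(\mathcal{G},M)$.

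Next I would identify this $\alpha$ as the information drift. Set $A^n := \int_0^\cdot \alpha^n_s d[M,M]_s$ and $A := \int_0^\cdot \alpha_s d[M,M]_s$. The strategy is to verify the hypotheses of Theorem~\ref{thm-stabDec}: we already have $\mathcal{G}^n\xrightarrow{L^2}\mathcal{G}$, hence $\xrightarrow{L^1}\mathcal{G}$, so the only thing to check is $\mathbb{E}\int_0^T d|A^n_t - A_t| \to 0$, i.e.\ $\mathbb{E}\int_0^T |\alpha^n_s - \alpha_s|\,d[M,M]_s \to 0$. This is the crux, and it does \emph{not} follow from weak convergence alone. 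Here I would use the non-decreasing structure of the filtrations: because $\mathcal{G}^m \subset \mathcal{G}^n$ for $m\le n$, the process $M^n = M - A^n$ is a $\mathcal{G}^n$-martingale while $M^m$ is a $\mathcal{G}^m$-martingale, and taking $\mathcal{G}^m$-optional projections gives a consistency/tower relation among the $A^n$. Concretely, for $m\le n$ and $s\le t$ one has $\mathbb{E}[A^n_t - A^n_s \mid \mathcal{G}^m_s] = \mathbb{E}[M_t - M_s \mid \mathcal{G}^m_s] = A^m_t - A^m_s + \mathbb{E}[M^m_t - M^m_s\mid\mathcal{G}^m_s]$; the last term vanishes, so conditional increments of $A^n$ project down to those of $A^m$. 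This makes $(A^n)$ behave like a martingale in $n$ (in a suitable weak sense, tested against $\mathcal{G}^m$-predictable integrands), and combined with the uniform $L^2$-bound one obtains a genuine martingale of $L^2$-integrands, which converges strongly. That strong $L^2(d\mathbb{P}\times d[M,M])$ convergence of $\alpha^n$ to $\alpha$ upgrades the weak limit and in particular gives $\mathbb{E}\int_0^T|\alpha^n_s-\alpha_s|\,d[M,M]_s\to 0$ by Cauchy--Schwarz.

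With that convergence in hand, Theorem~\ref{thm-stabDec} applies directly and yields that $M$ is a $\mathcal{G}$-semimartingale with decomposition $M = \widetilde{M} + \int_0^\cdot \alpha_s d[M,M]_s$ where $\widetilde{M} = M - A$ is a $\mathcal{G}$-local martingale; since $\alpha$ is $\mathcal{G}$-predictable and in $\mathcal{S}^2(\mathcal{G},M)$, it is \emph{the} information drift in the sense of Definition~\ref{def-id}. The main obstacle, as indicated, is promoting weak compactness to the strong $L^2$ (or at least $L^1$) convergence of the drifts needed to invoke Theorem~\ref{thm-stabDec}; the monotonicity of $(\mathcal{G}^n)$ is exactly what makes $(\alpha^n)$ (or its integral process) a convergent martingale and saves the argument. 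A secondary technical point is ensuring the limit lands in the predictable class and handling the localization so that $M$, $A$ are bounded — both standard.
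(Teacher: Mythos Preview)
Your strategy coincides with the paper's: obtain strong $L^2(d\mathbb{P}\times d[M,M])$ convergence of $(\alpha^n)$ by exploiting the nesting $\mathcal{G}^m\subset\mathcal{G}^n$, then feed the resulting $L^1$ convergence of compensators into Theorem~\ref{thm-stabDec}. Two remarks on execution.

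First, the weak-compactness detour is superfluous. The paper bypasses it by proving directly the orthogonality $\langle \alpha^n-\alpha^m,\alpha^m\rangle_{L^2(M)}=0$ for $m\le n$: since $\alpha^m$ is $\mathcal{G}^m$- hence $\mathcal{G}^n$-predictable, both $\int\alpha^m\,dM^m$ and $\int\alpha^m\,dM^n$ are mean-zero martingales, and $(\alpha^n-\alpha^m)\,d[M,M]=dM^m-dM^n$. Pythagoras then gives $\|\alpha^n\|^2 = \|\alpha^m\|^2 + \|\alpha^n-\alpha^m\|^2$, so the norms are non-decreasing and bounded, hence $(\alpha^n)$ is Cauchy and converges \emph{strongly} along the full sequence. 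No subsequence, no Mazur.

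Second, the tower relation you wrote, $\mathbb{E}[A^n_t-A^n_s\mid\mathcal{G}^m_s]=A^m_t-A^m_s$, is not correct as stated since $A^m_t$ is not $\mathcal{G}^m_s$-measurable. What is true (and what you actually need) is $\mathbb{E}\int_0^T H_s\,dA^n_s=\mathbb{E}\int_0^T H_s\,dA^m_s$ for every bounded $\mathcal{G}^m$-predictable $H$; taking $H=\alpha^m$ this is precisely the orthogonality above. Once corrected, your ``martingale in $n$'' heuristic becomes exactly the paper's Pythagorean argument.
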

\begin{proof} 
Suppose first that there exists such $\alpha$. Up to a localizing sequence, we can assume that $[M,M]$ is bounded by $C > 0$. 
With $A^n:=\int_0^.\alpha_s^n d[M,M]_s$ and $A:=\int_0^.\alpha_s d[M,M]_s$, we have 
\begin{align*}
\mathbb{E}\int_0^T d|A_t^n-A_t|
 \ = \
\mathbb{E}\int_0^T|\alpha_s^n-\alpha_s| d[M,M]_s \\
 \ \leq \ 
 \mathbb{E}
\Big[
 \sqrt{[M,M]_T}
\sqrt{\int_0^T(\alpha_s^n-\alpha_s)^2d[M,M]_s}
\Big] \\
 \ \leq \
\sqrt{C}
 \sqrt{\mathbb{E}\int_0^T(\alpha_s^n-\alpha_s)^2d[M,M]_s}
\xrightarrow[n\to\infty]{}0
\end{align*}
such that $M$ has decomposition $\widetilde{M}+\int_0^.\alpha_sd[M,M]_s$ by Theorem \ref{thm-stabDec}.

Now if $m\leq n$ we have in the Hilbert space 
$L^2(M):=L^2([0, T] \times \Omega,\mathcal{P},d[M,M]\times d\mathbb{P})$ the following orthogonality:
\begin{eqnarray*}
\mathbb{E}\int_0^T(\alpha^m-\alpha^n)\alpha^nd[M,M]_s
& = & \mathbb{E}\int_0^T\alpha^n [(dM_t-dM_t^m) - (dM_t-dM_t^n)] \\
& = & \mathbb{E}\int_0^T\alpha^n (dM_t^n-dM_t^m)\\
& = & 0
\end{eqnarray*}
because $\alpha^n$ is $\mathcal{G}^n$ and $\mathcal{G}^m$ predictable which implies that both stochastic integrals are $L^2$ martingales with expectation $0$. Hence 
$$\lVert \alpha^m\rVert_{L^2(M)}^2=\lVert \alpha^n\rVert_{L^2(M)}^2+\lVert \alpha^m-\alpha^n\rVert_{L^2(M)}^2$$ so the sequence
$(\lVert \alpha^n\rVert_{L^2(M)})_{n\geq 1}$ is increasing and bounded thus has finite limit 
$\sup_{n\in\mathbb{N}} \lVert\alpha^n\rVert_{L^2(M)}^2<\infty$.
It follows from
$$\lVert \alpha^m-\alpha^n\rVert_{L^2(M)}^2=\lVert\alpha^m\rVert_{L^2(M)}^2-\lVert\alpha^n\rVert_{L^2(M)}^2$$
that $(\alpha^n)_{n\geq1}$ is Cauchy in $L^2(M)$ and converges to some $\alpha\in L^2(M)$ 
which is predictable and hence in $\mathcal{S}^2(\mathcal{G},M)$.
\end{proof}

\subsection{Existence of an approximating sequence of discrete expansions \label{sub-SMGdec}}

In view of the preceding results it is natural to wonder about the converse direction, namely, 
the necessity of the existence and convergence of the compensators and information drifts of a semimartingale 
for a subsequence of filtrations $\mathcal{G}^n$ converging to a filtration $\mathcal{G}$ 
when the same property is assumed for $\mathcal{G}$.
Such results are established in terms of weak convergence 
and Skorohod's $J^1$ topology in \cite{CMS} and \cite{CMV}.
Here we first show a pointwise convergence in $L^1$ of the compensators when the compensator $A$ for $\mathcal{G}$ 
is obtained non constructively from Theorem \ref{thm-stabSM}.  

\bigskip

\begin{theorem}\label{thm-MCPlus}
Let $(\mathcal{G}^n)_{n\geq1}$ be a \textbf{non-decreasing} sequence of right-continuous filtrations 
such that $\forall t\in I,\mathcal{G}_t\subset\bigvee_{n\geq1}\mathcal{G}_t^n$. If $M$ is a 
$\mathcal{G}^n$ semimartingale with decomposition $M=:M^n+A^n$ for every $n\geq1$,
and $M$ is a $\mathcal{G}$-semimartingale with decomposition $M=:\widetilde{M}+A$,
then 
$$\forall t\in I,A_t^n\xrightarrow[n\to\infty]{L^1} A_t.$$
\end{theorem}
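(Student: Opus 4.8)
The goal is to show pointwise $L^1$-convergence of the finite-variation parts $A^n_t$ to $A_t$, where $A$ is the compensator in the limiting filtration $\mathcal{G}$ obtained (non-constructively) from Theorem \ref{thm-stabSM}. The natural strategy is to identify a candidate limit for $A^n_t$ via conditional expectations and then pin it down using the martingale properties in the respective filtrations. Since $\mathcal{G}^n$ is non-decreasing with $\mathcal{G}_t \subset \bigvee_n \mathcal{G}^n_t$, Proposition \ref{prop-VHn-providentiel-martingale} gives $\mathcal{G}^n_t \xrightarrow{L^p} \bigvee_n \mathcal{G}^n_t \supset \mathcal{G}_t$, so in particular (after noting that a larger $\sigma$-algebra is still a limit, or by restricting attention to $\mathcal{G}$-measurable test variables) we are in a setting where the convergence-of-filtrations machinery from Section \ref{sec-cv-fil} applies with limit $\mathcal{G}$.

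First I would fix $t\in I$ and work on $[0,t]$, using a localization argument to assume $M$, $A$, and each $A^n$ are bounded; one must be slightly careful that the localizing sequence can be chosen compatibly across all the filtrations, but since $M$ is continuous one can localize by deterministic times or by hitting times of $M$ and of $[M,M]$, which are stopping times for every $\mathcal{G}^n$ and for $\mathcal{G}$. Next, the key observation: for fixed $s \le t$, the $\mathcal{G}^n$-martingale property of $M - A^n$ gives
\[
\mathbb{E}[A^n_t - A^n_s \mid \mathcal{G}^n_s] = \mathbb{E}[M_t - M_s \mid \mathcal{G}^n_s].
\]
Taking $s=0$ (where $A^n_0 = 0$), $\mathbb{E}[A^n_t] = \mathbb{E}[M_t - M_0]$, but more usefully I would apply Proposition \ref{prop-propConv}: since $\mathcal{G}^n_s \xrightarrow{L^1} \mathcal{G}_s$ and $M_t - M_s \xrightarrow{L^1} M_t - M_s$ trivially, the right-hand side converges in $L^1$ to $\mathbb{E}[M_t - M_s \mid \mathcal{G}_s] = \mathbb{E}[A_t - A_s \mid \mathcal{G}_s]$, the last equality because $M - A$ is a $\mathcal{G}$-martingale. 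So the conditional increments converge; the task is to upgrade this to convergence of $A^n_t$ itself.

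The main obstacle is precisely this upgrade — convergence of conditional expectations $\mathbb{E}[A^n_t \mid \mathcal{G}^n_s]$ does not immediately give convergence of $A^n_t$. Here I would exploit that $A^n$ is $\mathcal{G}^n$-predictable, hence $\mathcal{G}^n_t$-measurable, so $\mathbb{E}[A^n_t \mid \mathcal{G}^n_t] = A^n_t$; combined with $A^n_t - \mathbb{E}[M_t \mid \mathcal{G}^n_t] = A^n_t - M_t + \mathbb{E}[M_t - (M_t - A^n_t)\mid \mathcal G^n_t]$ — more directly, from the martingale property with $s \uparrow t$ (using right-continuity of $\mathcal{G}^n$ and continuity of paths) one gets $A^n_t = \mathbb{E}[M_t \mid \mathcal{G}^n_t] - \mathbb{E}[\widetilde M^n_t \mid \mathcal G^n_t]$ type identities; the cleanest route is: since $M - A^n$ is a $\mathcal{G}^n$-martingale and $A^n$ is $\mathcal G^n$-adapted, $\mathbb{E}[M_t \mid \mathcal{G}^n_0] = \mathbb{E}[M_t - A^n_t \mid \mathcal G^n_0] + \mathbb E[A^n_t\mid \mathcal G^n_0]$ is not enough either. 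I would instead argue: write $A^n_t = M_t - \widetilde M^n_t$ where $\widetilde M^n = M - A^n$ is the $\mathcal G^n$-martingale with $\widetilde M^n_0 = M_0$; then $\widetilde M^n_t = \mathbb{E}[\widetilde M^n_{t'} \mid \mathcal G^n_t]$ is not helpful across $n$. The honest resolution is to use uniform integrability: the hypothesis underlying Theorem \ref{thm-stabSM} (that $\sup_n \mathbb{E}\int_0^T d|A^n|<\infty$, which must be in force here since $A$ was obtained from it) bounds the $A^n_t$ in $L^1$ uniformly; combined with the boundedness from localization, $(A^n_t)_n$ is uniformly integrable, so it suffices to show $A^n_t \to A_t$ weakly in $L^1$, i.e. $\mathbb{E}[A^n_t Y] \to \mathbb{E}[A_t Y]$ for $Y$ bounded and $\mathcal{G}_t$-measurable (and then for $Y$ merely bounded, since $A_t$ is $\mathcal{G}_t$-measurable and we may replace $Y$ by $\mathbb{E}[Y\mid \mathcal G_t]$ modulo an error controlled by convergence of filtrations). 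For such $Y$, approximate $Y$ by $\mathbb{E}[Y\mid \mathcal{G}^n_s]$-type simple functions or directly: $\mathbb{E}[A^n_t\, \mathbb{E}[Y\mid\mathcal G^n_s]] = \mathbb{E}[\mathbb{E}[A^n_t\mid\mathcal G^n_s] \,\mathbb E[Y\mid\mathcal G^n_s]]$, and here replacing $A^n_t$ by $A^n_s + \mathbb E[M_t-M_s\mid\mathcal G^n_s]$ and iterating over a partition $0 = s_0 < s_1 < \dots < s_k = t$ telescopes the expression into a sum of terms of the form $\mathbb{E}[\mathbb{E}[M_{s_{j+1}}-M_{s_j}\mid \mathcal G^n_{s_j}]\,\mathbb E[Y\mid \mathcal G^n_{s_j}]]$, each of which converges by Proposition \ref{prop-propConv} to the corresponding term with $\mathcal G$, whose telescoped sum is $\mathbb{E}[A_t Y]$ because $M-A$ is a $\mathcal G$-martingale. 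Letting the mesh of the partition go to zero (using continuity and $L^1$-boundedness of $A$ to control the discretization error uniformly in $n$) closes the argument. The delicate points are this uniform-in-$n$ discretization estimate and the passage from $\mathcal G_t$-measurable test functions to general bounded test functions via convergence of $\sigma$-algebras; everything else is routine bookkeeping with Proposition \ref{prop-propConv}.
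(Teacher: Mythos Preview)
The paper's proof is much shorter and rests on a single structural identification that you never reach: by the F\"ollmer--Protter filtration shrinkage theorem, the $\mathcal{G}^n$-canonical decomposition of the $\mathcal{G}$-semimartingale $M=\widetilde{M}+A$ is obtained by taking optional projections onto $\mathcal{G}^n$, so by uniqueness of the decomposition one has $A^n_t=\mathbb{E}[A_t\mid\mathcal{G}^n_t]$ for every $t$. The $L^1$ convergence $A^n_t\to A_t$ is then immediate from Proposition~\ref{prop-propConv} (equivalently, from closed-martingale convergence, since $(\mathbb{E}[A_t\mid\mathcal{G}^n_t])_{n\ge 1}$ is a uniformly integrable martingale in $n$ with terminal value $A_t$).

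Your route has a genuine gap. Even if your telescoping argument established uniform integrability of $(A^n_t)_n$ together with weak $L^1$ convergence $\mathbb{E}[A^n_t\,Y]\to\mathbb{E}[A_t\,Y]$, these two facts do \emph{not} imply strong $L^1$ convergence: a bounded sequence of Rademacher variables is uniformly integrable and converges weakly to $0$ but not in $L^1$. What would suffice is convergence in probability together with uniform integrability, and nothing in your partition scheme produces convergence in probability. A second issue is that you invoke the bound $\sup_n\mathbb{E}\int_0^T d|A^n|<\infty$ as though it were a hypothesis of Theorem~\ref{thm-MCPlus}; it is not---it is a hypothesis of Theorem~\ref{thm-stabSM}, and the present statement assumes only that $M$ is already a $\mathcal{G}$-semimartingale. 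The key idea you are missing is precisely the shrinkage identity $A^n_t=\mathbb{E}[A_t\mid\mathcal{G}^n_t]$, which makes all the partition and weak-convergence machinery unnecessary.
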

\begin{proof}
Since $M$ is a continuous $\mathcal{F}$-adapted $\mathcal{G}$-semimartingale, the filtration shrinkage theorem 
in~\cite{FP} assures that $M$ is a $\mathcal{G}^n$-special semimartingale with decomposition 
$M={}^{(n)}\widetilde{M}+{}^{(n)}A$, where ${}^{(n)}$ stands for the optional projection with respect to $\mathcal{G}^n$. 

It follows from the uniqueness of the semimartingale decomposition of $M$ in the right-continuous filtration $\mathcal{G}^n$ that $A^n= {}^{(n)}A$, and Proposition \ref{prop-propConv} proves the pointwise convergence $\forall t\in I,\mathbb{E}[A_t|\mathcal{G}_t^n]\xrightarrow[n\to\infty]{L^1}A_t$.
\end{proof}

We would like to reach the same conclusion in term of convergence in $\mathscr{H}^1$ and $\mathscr{H}^2$ for $A$ 
and the information drift (instead of the integrated finite variation process). 
Based on the next Theorem \ref{thm-embedding},
Theorem \ref{thm-brownian-miracle}
shows that is possible in the case of square integrable martingales.

\bigskip

\begin{theorem} \label{thm-embedding}
Let $M$ be a $\mathcal{F}$-local martingale and $\mathcal{G}$ and $\mathcal{H}$ two filtrations with $\mathcal{F}_t\subset\mathcal{G}_t\subset\mathcal{H}_t$ for every $t\geq0$.
Suppose that $M$ is a $\mathcal{H}$-semimartingale with a decomposition $M=M_0+\widetilde{M}+\int_0^.\alpha_u d[M,M]_u$, with $\mathbb{E}\int_I|\alpha_u|d[M,M]_u <\infty$. 
Then $M$ is a $\mathcal{G}$ semimartingale with decomposition $M = M_0 + {}^o\widetilde{M} + \int {}^o\alpha_u d[M,M]_u$, where the optional projections are taken with respect to $\mathcal{G}$. In particular the optional projection of $\widetilde{M}$ on $\mathcal{F}$ is $M$.
\end{theorem}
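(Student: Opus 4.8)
The plan is to reduce the statement to an application of the filtration shrinkage theorem of Föllmer and Protter \cite{FP}, exactly as in the proof of Theorem \ref{thm-MCPlus}, but now carrying along the finer decomposition provided by the integrability hypothesis $\mathbb{E}\int_I|\alpha_u|d[M,M]_u<\infty$. First I would observe that $M$ is a continuous $\mathcal{F}$-local martingale, hence in particular $\mathcal{G}$-adapted, and that by hypothesis it is a $\mathcal{H}$-semimartingale whose finite-variation part $\int_0^\cdot \alpha_u d[M,M]_u$ has integrable total variation over $I$; thus $M$ is a $\mathcal{H}$-special semimartingale of class $\mathscr{H}^1(\mathcal{H},M)$. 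Applying filtration shrinkage from $\mathcal{H}$ down to the intermediate filtration $\mathcal{G}$ gives that $M$ remains a $\mathcal{G}$-special semimartingale, with $\mathcal{G}$-canonical decomposition obtained by taking $\mathcal{G}$-optional projections of each piece: the local martingale part becomes ${}^o\widetilde{M}$ and the finite-variation part becomes ${}^o\!\big(\int_0^\cdot \alpha_u d[M,M]_u\big)$.

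The second step is to identify ${}^o\!\big(\int_0^\cdot \alpha_u d[M,M]_u\big)$ with $\int_0^\cdot {}^o\alpha_u\, d[M,M]_u$. Here I would use that $[M,M]$ is an $\mathcal{F}$-adapted (hence $\mathcal{G}$-adapted), continuous, increasing process, so that the measure $d[M,M]_s$ does not interact with the projection; more precisely, for each fixed $t$, $\mathbb{E}\big[\int_0^t \alpha_u d[M,M]_u \mid \mathcal{G}_t\big]$ can be rewritten, via a Fubini-type argument (justified by the $\mathscr{H}^1$ integrability) together with the fact that $[M,M]$ is $\mathcal{G}$-adapted, as $\int_0^t \mathbb{E}[\alpha_u\mid \mathcal{G}_u]\, d[M,M]_u$ — that is, the optional projection passes inside the $d[M,M]$ integral. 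This is the standard commutation of optional projection with integration against an adapted increasing process; I would cite the relevant projection theorem (e.g.\ in \cite{PPbook}) rather than reprove it. The continuity assumptions in the paper let one avoid any subtlety between optional and predictable projections. This identification of ${}^o\big(\alpha\cdot[M,M]\big)$ with $({}^o\alpha)\cdot[M,M]$ is, I expect, the main technical obstacle — everything else is bookkeeping — and the place to be careful is ensuring the Fubini exchange is licensed by the assumed integrability of $|\alpha|$ against $d[M,M]$ uniformly in the relevant sense.

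For the final assertion, I would specialize to $\mathcal{G} = \mathcal{F}$ (the filtration shrinkage applies equally to the chain $\mathcal{F}\subset\mathcal{F}\subset\mathcal{H}$, or more to the point, $\mathcal{F}\subset\mathcal{G}$ and then down to $\mathcal{F}$, or directly $\mathcal{F}\subset\mathcal{H}$). Since $M$ is already an $\mathcal{F}$-local martingale, its $\mathcal{F}$-canonical decomposition is $M = M_0 + (M-M_0) + 0$, and by uniqueness of the canonical decomposition of a special semimartingale in the filtration $\mathcal{F}$, the $\mathcal{F}$-optional projection of the $\mathcal{H}$-local-martingale part $\widetilde{M}$ must coincide with $M - M_0$ (equivalently, the $\mathcal{F}$-optional projection of $\int_0^\cdot \alpha_u d[M,M]_u$ vanishes, i.e.\ ${}^o\alpha = 0$ $d\mathbb{P}\times d[M,M]$-a.s.\ when projecting onto $\mathcal{F}$). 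Writing this out using the commutation established in step two gives precisely that the $\mathcal{F}$-optional projection of $\widetilde{M}$ is $M$ (up to the constant $M_0$, which is absorbed in the normalization $M_0 = M_0$). I would close by noting that all optional projections here are well defined because the processes involved are of integrable variation or are uniformly integrable after localization, so the standard existence theorems for optional projections apply.
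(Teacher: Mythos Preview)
Your outline follows the paper's approach—filtration shrinkage via \cite{FP} plus commutation of the optional projection with integration against $d[M,M]$—but it skips the one step that requires real work: showing that ${}^o\widetilde{M}$ is a $\mathcal{G}$-\emph{local} martingale. The basic F\"ollmer--Protter result (Theorem 2.2 in \cite{FP}) applies to true martingales; for the $\mathcal{H}$-local martingale $\widetilde{M}$ the reducing sequence is a priori only a sequence of $\mathcal{H}$-stopping times, which need not be $\mathcal{G}$- (let alone $\mathcal{F}$-) stopping times, so you cannot simply ``localize and project.'' The paper's key observation, which your write-up is missing, is that $[\widetilde{M},\widetilde{M}]=[M,M]$ since the finite-variation part is continuous, and $[M,M]$ is $\mathcal{F}$-adapted; hence the $\mathcal{F}$-stopping times $\tau_k:=\inf\{t:[M,M]_t>k\}$ reduce $\widetilde{M}$ to a square-integrable $\mathcal{H}$-martingale, after which \cite[Theorem 3.7]{FP} yields that the projection is a local martingale in the smaller filtration. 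Your closing sentence about ``uniformly integrable after localization'' concerns existence of optional projections, not preservation of the (local) martingale property under projection—that is where the gap lies.

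A smaller point on your second step: what one actually gets (the paper invokes Lemma 10 of \cite{KLP} and \cite[Theorem 57]{DelMey-B}) is that ${}^o\!\big(\int_0^\cdot \alpha_u\,d[M,M]_u\big)-\int_0^\cdot {}^o\alpha_u\,d[M,M]_u$ is a $\mathcal{G}$-martingale, not that the two processes coincide; this martingale is then absorbed into the local-martingale part of the $\mathcal{G}$-decomposition. Asserting equality outright would require the optional projection of a finite-variation process to again have finite variation, which fails in general.
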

\begin{proof}
The existence of optional projections is guaranteed as follows:
on the one hand, $M$ is $\mathcal{G}$-adapted and continuous hence $\mathcal{G}$-optional; 
on the other hand $A$ admits an optional projection (on $\mathcal{G}$) because our assumptions imply 
$\mathbb{E}\int_I|\alpha_u|d[M,M]_u <\infty$ so that is $A$ is (prelocally) integrable;
thus the equation $M = M_0 + \widetilde{M}+ \int_0^.\alpha_ud[M,M]_u$ gives the existence of the optional 
projection ${}^o\widetilde{M}$ as well as
\bee
M = M_0 + {}^o\widetilde{M}+ {}^o\int_0^.\alpha_ud[M,M]_u
\eee

The key argument of this proof is that we have $[\widetilde{M},\widetilde{M}] = [M,M]$. 
The theorem can be proved with classical arguments if $M$ is a square integrable martingale. 
For illustration purposes let us consider first the case where $[M,M]$ is bounded, hence integrable.
It follows from \cite[pp.73-74, Corollary 3]{PPbook} 
that the continuous $\mathcal{H}$-local martingale $\widetilde{M}$ is a square integrable martingale 
and in particular a continuous $\mathcal{H}$-martingale, so that its optional projection on 
$\mathcal{G}$ is a $\mathcal{G}$-martingale by Theorem 2.2 of \cite{FP} on filtration shrinkage.

In the general case, if $\mathbb{P}([M,M]_t\xrightarrow[n\to\infty]{}\infty)>0$, we can define a sequence of $\mathcal{F}$-stopping times $\tau_k:=\inf\{t\geq0: [M,M]_t > k\}$ which we can suppose increasing by considering $\widetilde{\tau}_0 := \tau_0$, $\widetilde{\tau}_{k+1}:=\tau_n\vee(\widetilde\tau_{k-1}+1)$.
Again, it follows from $[M,M] = [\widetilde{M},\widetilde{M}]$ that $[\widetilde{M},\widetilde{M}]^{\tau_k}$ is bounded. Hence \cite[pp.73-74, Corollary 3]{PPbook} still applies so that $[\widetilde{M},\widetilde{M}]^{\tau_k}$ is a square integrable $\mathcal{G}$-martingale. 
Hence $\tau_k$ is a $\mathcal{F}$-localizing sequence which turns $\widetilde{M}$ into a $\mathcal{G}$-martingale, which implies that the optional projection of $\widetilde{M}$ on $\mathcal{F}$ is a $\mathcal{F}$-local martingale by \cite[Theorem 3.7]{FP}. 
From there we obtain easily that the optional projection of $\widetilde{M}$ on $\mathcal{G}^n$ is a $\mathcal{G}^n$-local martingale, either by arguing that $\tau_k$ is also a $\mathcal{G}^n$-localizing sequence, or by invoking Corollary 3.2 of \cite{FP}.

Finally let us consider the last term $A_t:=\int_0^t\alpha_ud[M,M]_u$.
Since $$\mathbb{E}\int_I|\alpha_u|d[M,M]_u <\infty$$ 
it follows from the proof of Lemma 10 of \cite{KLP} that 
$$\mathbb{E}[\int_0^t\alpha_ud[M,M]_u|\mathcal{G}_t] - \int_0^t\mathbb{E}[\alpha_u|\mathcal{G}_u]d[M,M]_u$$
defines a $\mathcal{G}$-martingale (see also Theorem 57 and Remark 58.4.d of \cite{DelMey-B}), where $\mathbb{E}[\alpha_u|\mathcal{G}_u]$ is here any measurable adapted version.
A standard predictable projection argument finally lets us conclude the existence of a predictable version 
of $\alpha^n$. And, for every $n\geq 1$ we have by Jensen's inequality
$\mathbb{E}\int_0^T{}^o\alpha_s^2ds \leq \mathbb{E}\int_0^T\alpha_s^2ds<\infty$.
\end{proof}

\begin{theorem}\label{thm-brownian-miracle} Let $M$ be a continuous square integrable $\mathcal{F}$-martingale, $\mathcal{H}$ another filtration and $\mathcal{G}_t:=\bigcap_{u>t}(\mathcal{F}_u\vee\mathcal{H}_u)$. Suppose that $\mathcal{H}^n$ is a refining sequence of filtrations such that $\mathcal{H}_t = \bigvee_{n\geq1} \mathcal{H}_t^n$, and let $\mathcal{G}_t^n:=\bigcap_{u>t}(\mathcal{F}_u\vee\mathcal{H}_u^n)$. Then, the following statements are equivalent :
\begin{enumerate}[(i)]
\item There exists a predictable process $\alpha$ such that $\widetilde{M}_t:=M-\int_0^t\alpha_sds$ defines a continuous $\mathcal{G}$-local martingale and $\mathbb{E}\int_0^T\alpha_s^2ds<\infty$.
\item For every $n\geq1$ there exists a predictable process $\alpha^n$ such that $\widetilde{M}_t^n:=M-\int_0^t\alpha_u^n ds$ is a continuous $\mathcal{G}^n$-local martingale and 
$\sup_{n\geq1}\mathbb{E}\int_0^T(\alpha_u^n)^2d[M,M]_u$ 
$<\infty$.
\end{enumerate}
In that case, $\widetilde{M}$ (resp. $\widetilde{M}^n$) is a $\mathcal{G}$- (resp. $\mathcal{G}^n$-) square integrable martingale and we have $\int_0^t(\alpha_u^n-\alpha_u)^2 d[M,M]_u\xrightarrow[n\to\infty]{}0$.
\end{theorem}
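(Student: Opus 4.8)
The plan is to prove the two implications separately and then read the remaining assertions off the ingredients of each: $(i)\Rightarrow(ii)$ will be a direct application of Theorem~\ref{thm-embedding}, while $(ii)\Rightarrow(i)$ will reduce, once the $L^2$-convergence $\mathcal{G}^n\to\mathcal{G}$ is in place, to Theorem~\ref{thm-convID}. For $(i)\Rightarrow(ii)$ I would first record the inclusions $\mathcal{F}_t\subset\mathcal{G}_t^n\subset\mathcal{G}_t$: the right inclusion is immediate from $\mathcal{H}_u^n\subset\mathcal{H}_u$, and the left one follows from right-continuity of $\mathcal{F}$, since $\mathcal{F}_t=\bigcap_{u>t}\mathcal{F}_u\subset\bigcap_{u>t}(\mathcal{F}_u\vee\mathcal{H}_u^n)=\mathcal{G}_t^n$. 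Next I would check that $(i)$ supplies the integrability needed by Theorem~\ref{thm-embedding}: since $M$ is a continuous square-integrable martingale we have $\mathbb{E}[M,M]_T<\infty$, so Cauchy--Schwarz upgrades $\mathbb{E}\int_0^T\alpha_s^2\,d[M,M]_s<\infty$ to $\mathbb{E}\int_0^T|\alpha_s|\,d[M,M]_s<\infty$. Applying Theorem~\ref{thm-embedding} with $\mathcal{G}$ in the role of the larger filtration $\mathcal{H}$ and $\mathcal{G}^n$ in the role of the intermediate filtration then yields the $\mathcal{G}^n$-decomposition $M=M_0+{}^o\widetilde{M}+\int_0^{\cdot}{}^o\alpha_s\,d[M,M]_s$, optional projections being taken with respect to $\mathcal{G}^n$; hence $\alpha^n:={}^o\alpha$ (for the predictable version whose existence is part of that theorem) is an information drift of $M$ in $\mathcal{G}^n$, the projection ${}^o\widetilde{M}$ is a continuous $\mathcal{G}^n$-local martingale, and conditional Jensen gives $\mathbb{E}\int_0^T(\alpha_s^n)^2\,d[M,M]_s\le\mathbb{E}\int_0^T\alpha_s^2\,d[M,M]_s<\infty$ uniformly in $n$, which is exactly $(ii)$.

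For $(ii)\Rightarrow(i)$ the crucial preliminary is that $\mathcal{G}^n\to\mathcal{G}$ weakly in $L^2$. Because $\mathcal{H}^n$ is refining, $(\mathcal{F}_t\vee\mathcal{H}_t^n)_n$ is non-decreasing with $\bigvee_n(\mathcal{F}_t\vee\mathcal{H}_t^n)=\mathcal{F}_t\vee\mathcal{H}_t$, so Proposition~\ref{prop-VHn-providentiel-martingale} gives $\mathcal{F}_t\vee\mathcal{H}_t^n\xrightarrow{L^2}\mathcal{F}_t\vee\mathcal{H}_t$, and hence, since $\mathcal{G}_t\subset\mathcal{F}_t\vee\mathcal{H}_t$ and any sub-$\sigma$-algebra of a limit is again a limit, $\mathcal{F}_t\vee\mathcal{H}_t^n\xrightarrow{L^2}\mathcal{G}_t$ as well. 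Corollary~\ref{cor-doncDiscRC}, applied with $\check{\mathcal{Y}}_u^n=\mathcal{F}_u\vee\mathcal{H}_u^n$ and $\check{\mathcal{Y}}_u=\mathcal{G}_u$, then gives $\mathcal{G}_t^n=\bigcap_{u>t}(\mathcal{F}_u\vee\mathcal{H}_u^n)\xrightarrow{L^2}\mathcal{G}_t$. Since $(\mathcal{G}^n)_n$ is non-decreasing, $M$ is a $\mathcal{G}^n$-semimartingale with decomposition $M=M^n+\int_0^{\cdot}\alpha_s^n\,d[M,M]_s$, and $\sup_n\mathbb{E}\int_0^T(\alpha_s^n)^2\,d[M,M]_s<\infty$ by $(ii)$, Theorem~\ref{thm-convID} applies verbatim and yields that $M$ is a $\mathcal{G}$-semimartingale with decomposition $M=\widetilde{M}+\int_0^{\cdot}\alpha_s\,d[M,M]_s$ for some predictable $\alpha\in\mathcal{S}^2(\mathcal{G},M)$, i.e.\ $\mathbb{E}\int_0^T\alpha_s^2\,d[M,M]_s<\infty$ and $\widetilde{M}=M-\int_0^{\cdot}\alpha_s\,d[M,M]_s$ is a continuous $\mathcal{G}$-local martingale; this is $(i)$.

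For the supplementary claims, note that in each filtration the drift is a continuous finite-variation process, so $[\widetilde{M},\widetilde{M}]=[M,M]$ and likewise $[\widetilde{M}^n,\widetilde{M}^n]=[M,M]$; consequently $\mathbb{E}[\widetilde{M},\widetilde{M}]_T=\mathbb{E}[M,M]_T<\infty$, which forces the continuous local martingales $\widetilde{M}$ and $\widetilde{M}^n$ to be genuine $\mathcal{G}$- resp.\ $\mathcal{G}^n$-square-integrable martingales. The convergence $\int_0^t(\alpha_u^n-\alpha_u)^2\,d[M,M]_u\to0$ is precisely the assertion that $\alpha^n\to\alpha$ in $L^2(M)=L^2([0,T]\times\Omega,\mathcal{P},d[M,M]\times d\mathbb{P})$, which is established inside the proof of Theorem~\ref{thm-convID}: the $(\alpha^n)_n$ are shown there to be Cauchy in $L^2(M)$, and their limit is identified with the information drift $\alpha$ by the uniqueness in part (a) of Theorem~\ref{thm-alpha-lm-deflator}. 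The step I expect to be the main obstacle is the $L^2$-convergence $\mathcal{G}^n\to\mathcal{G}$ of the right-continuously regularized filtrations: passing the convergence of $\mathcal{F}_t\vee\mathcal{H}_t^n$ through the intersection $\bigcap_{u>t}$ is exactly what Corollary~\ref{cor-doncDiscRC} (built on Proposition~\ref{prop-VHn-providentiel-martingale} and the ``sub-$\sigma$-algebra of a limit is a limit'' observation) is needed for, and is not something the bare identity $\mathcal{H}_t=\bigvee_n\mathcal{H}_t^n$ delivers on the nose. Everything else is bookkeeping — matching the constant $M_0$ between the $M-\int\alpha$ and $M_0+\widetilde{M}+\int\alpha$ conventions, and the harmless identification of $ds$ with $d[M,M]_s$ in the statement.
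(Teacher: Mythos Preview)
Your overall strategy matches the paper's: $(i)\Rightarrow(ii)$ via Theorem~\ref{thm-embedding} (the paper essentially reproves that theorem inline using filtration shrinkage and Lemma~10 of \cite{KLP}, so your invocation is cleaner), and $(ii)\Rightarrow(i)$ via Theorem~\ref{thm-convID} once the filtration convergence is in hand. The supplementary claims are handled the same way in both.

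There is, however, one genuine slip in your argument for the filtration convergence. You write ``since $\mathcal{G}_t\subset\mathcal{F}_t\vee\mathcal{H}_t$'', but the inclusion goes the other way: $\mathcal{G}_t=\bigcap_{u>t}(\mathcal{F}_u\vee\mathcal{H}_u)\supset\mathcal{F}_t\vee\mathcal{H}_t=\check{\mathcal{G}}_t$, since each $\mathcal{F}_u\vee\mathcal{H}_u$ with $u>t$ contains $\mathcal{F}_t\vee\mathcal{H}_t$. So the ``sub-$\sigma$-algebra of a limit is again a limit'' step does not give you $\check{\mathcal{G}}^n_t\to\mathcal{G}_t$; it only gives $\check{\mathcal{G}}^n_t\to\check{\mathcal{G}}_t$ (which you already had from Proposition~\ref{prop-VHn-providentiel-martingale}). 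Corollary~\ref{cor-doncDiscRC} then yields $\mathcal{G}^n_t\to\check{\mathcal{G}}_t$, not $\mathcal{G}^n_t\to\mathcal{G}_t$.

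The repair is short: apply Theorem~\ref{thm-convID} with the target filtration $\check{\mathcal{G}}$ to obtain the $\check{\mathcal{G}}$-decomposition $M=\widetilde{M}+\int_0^{\cdot}\alpha_s\,d[M,M]_s$, and then invoke Proposition~\ref{prop-usual-hyp}(1) to pass to the right-continuous regularization $\mathcal{G}$ with the same decomposition. (The paper itself simply asserts $\mathcal{G}^n\to\mathcal{G}$ without spelling this out, so the issue is glossed over there too; your more detailed argument just happened to expose the reversed inclusion.) With that correction the proof is complete and essentially identical to the paper's.
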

\begin{proof}
Let us first remark that since $[\widetilde{M},\widetilde{M}] = [M,M]$
every $\mathcal{G}^n$-local martingale $\widetilde{M}^n$ or 
$\mathcal{G}$-local martingale $\widetilde{M}$ as in conditions $(i)$ and $(ii)$
is a square integrable martingale (see \cite[pp.73-74, Corollary 4]{PPbook}).
 
Clearly, since the condition $\mathcal{H}_t = \bigvee_{n\geq1} \mathcal{H}_t^n$ 
insures $\mathcal{G}^n\xrightarrow[n\to\infty]{L^2}\mathcal{G}$, the implication $(ii)\implies (i)$ 
as an application of Theorem \ref{thm-convID}.

Let us now suppose $(i)$, fix $n\geq 1$ and take a measurable adapted process $\alpha$ 
satisfying $\mathbb{E}\int_I|\alpha_u|d[M,M]_u <\infty$ such that 
$\widetilde{M}_t := M_t - \int_0^t\alpha_ud[M,M]_u$ is a local martingale.
In this equation, $M$ is $\mathcal{G}^n$-adapted and continuous hence $\mathcal{G}^n$-optional; 
since $\widetilde{M}$ is a continuous $\mathcal{G}$-martingale, its optional projection on $\mathcal{G}^n$
is a $\mathcal{G}^n$-martingale by Theorem 2.2 of \cite{FP} on filtration shrinkage.
Note that by Corollary 3.2 of \cite{FP}, proving that the optional projection of $\widetilde{M}$ on $\mathcal{F}$ is $M$
would be sufficient to obtain that $\widetilde{M}$ is a continuous $\mathcal{G}$-martingale.
Finally since $\mathbb{E}\int_I|\alpha_u|d[M,M]_u <\infty$ the last term $A_t:=\int_0^t\alpha_ud[M,M]_u$ 
it follows from the proof of Lemma 10 of \cite{KLP} that 
$\mathbb{E}[\int_0^t\alpha_ud[M,M]_u|\mathcal{G}_t^n] - \int_0^t\mathbb{E}[\alpha_u|\mathcal{G}_u]d[M,M]_u$
defines a $\mathcal{G}^n$-martingale (see also Theorem 57 and Remark 58.4.d of \cite{DelMey-B}), where $\mathbb{E}[\alpha_u|\mathcal{G}_u^n]$ is here any measurable adapted version.

A predictable projection argument finally lets us conclude the existence of a predictable version 
of $\alpha^n$. And, for every $n\geq 1$ we have by Jensen's inequality
$\mathbb{E}\int_0^T(\alpha_s^n)^2ds \leq \mathbb{E}\int_0^T\alpha_s^2ds<\infty$
which concludes concludes the proof of $(ii)$.

In both cases the convergence $\int_0^t(\alpha_s^n-\alpha_s)^2 ds\xrightarrow[n\to\infty]{}0$ 
follows from Theorem \ref{thm-convID} invoking the uniqueness of the limit in $\mathscr{H}^2$.
\end{proof}

\section{Expansion with a stochastic process}\label{sec-exp-process}

\noindent
Let us now consider an expansion of the filtration $\mathcal{F}$ with a c{\`a}dl{\` a}g process $X$ on a bounded time interval $I:=[0,T],T>0$. We let $\mathcal{H}$ be the natural filtration of $X$, 
$$\mathcal{H}_t=\sigma(X_s,s\leq t),$$
$\check{\mathcal{G}}$ the expansion of $\mathcal{F}$ with $X$, namely
$$\check{\mathcal{G}}_t:=\mathcal{F}_t\vee\mathcal{H}_t,$$
and we consider the smallest right-continuous filtration containing $\mathcal{F}$ to which $X$ is adapted $\mathcal{G}$, given by
$$\mathcal{G}_t:=\bigcap_{u>t}\check{\mathcal{G}}_u.$$
Proposition \ref{prop-usual-hyp} shows that $\mathcal{G}$ and $\check{\mathcal{G}}$ are essentially equivalent as regards semimartingales and decompositions.

As we argued in the introduction, expansions of filtrations with stochastic processes reaches a level of generality that one could desire in applications. Most quantitative applications would rely on an estimation of the compensator of semimartingales for the augmented filtration. Hence in a model of practical use it is natural to expect the existence of the information drift, for either non-degeneracy or tractability purposes.
Let us get some insight on a class of models that could satisfy this requirement.

Let us start by saying that adding independent information to the filtration does not have any effect on semimartingales and their decompositions (see \cite{BY}). Obviously, models of interest would involve some anticipation of the reference filtration. 
On the one hand it is now well known that one obtains non-degenerate models by initial expansions, even if they anticipate part of the reference filtration with a finite (and probably countable) number of random variables, as long as they satisfy Jacod's condition (see \cite{KLP} or \cite{PPbook}), so that we can expect to be able to pass to some continuous limit.

On the other hand, when one anticipates naively the future, it is easy to see how the semimartingale property can be broken: indeed if a local martingale remains a semimartingale while anticipating its future path on a fixed time interval,
then it must be of finite variation on compact sets (see also \cite{LNthesis}), which makes it for instance constant if it is continuous.

We deduce from the results of the preceding section that the expansion of $\mathcal{F}$ with $X$ can be consistently obtained as the limit of expansions with simple processes approximating $X$. A careful description of a class of processes satisfying our assumptions is given in Section \ref{sub-class-x}. We conclude with some illustrative examples.

\subsection{Discretization and convergence}

Kchia and Protter~\cite{KP} prove the conservation of the semimartingales in the above framework, and we add here a convergence result for the information drifts based on our convergence of filtrations in $L^p$ which is probably the most suggestive result of this article.

Let us approximate the expansion $\check{\mathcal{G}}$ with discrete augmentations generated by finite samples of $X$. Indeed such expansions have a behavior similar to initial expansions and can be relatively well understood under Jacod's condition which has been accepted to some extent as a satisfactory answer to the problem of initial expansion. For conciseness we will not give details on initial expansions (we refer the reader to \cite{KLP} and our introduction) and we state a corollary of Theorem 4 in \cite{KLP} as a preliminary result.\\ \\
Let $(\pi^n)_{n\geq 1}$ be a refining sequence of subdivisions of $[0,T]$ with mesh size converging to $0$. We also denote $\pi^n=:(t_i^n)_{i=0}^{\ell(n)}$, with $0=t_0^n<...<t_{\ell(n)}^n<t_{\ell(n)+1}^n=T$. For every $n\geq 1$ we define a discrete c{\`a}dl{\`a}g process $X^n$ by 
$$X_t^n:=X_t^{n,\pi}:=\sum_{i=0}^{\ell(n)} X_{t_{t_i^n}} 1_{t_i^n\leq t<t_{i+1}^n}.$$
We also define the non-decreasing sequence of filtration $\mathcal{H}^{n}$ generated by $X^n$, 
$$\mathcal{H}_t^n:=\mathcal{H}_t^{n,\pi}:=\sigma(X_{s}^n,s\leq t)=\sigma(X_{t_0^n},X_{t_1^n}-X_{t_0^n},...,X_{t_{\ell(n)+1}^n}-X_{t_{\ell(n)}^n}),$$
as well as the expansions $\check{\mathcal{G}}^n$ and $\mathcal{G}^n$ given by 
$$\check{\mathcal{G}}_t^n:=\mathcal{F}_t\vee\mathcal{H}_t^n\text{\hspace{.2in}and\hspace{.2in}}
\mathcal{G}_t^n=\bigcap_{u>t}\check{\mathcal{G}}_u^n.$$
Recall that $(E,\mathcal{E})$ is a generic Polish space and denote $(E,\mathcal{E})^n:=(E^n,\mathcal{E}^{\otimes n})$ 
the product Polish space ($n\geq 1$). 
Jacod introduces the next condition in \cite{Jacod-book} as a sufficient criterion for the conservation of semimartingales in initial expansions. (See also~\cite{PPbook}.)

\begin{proposition}[Jacod's condition] Let $L$ be a random variable 
and let
\begin{eqnarray*}
I\times \Omega \times \mathcal{A} &\longrightarrow &[0,1] \\
(t,\omega,A) & \longmapsto & P_t(\omega,A)
\end{eqnarray*}
be a regular version of its conditional distributions 
with respect to $\mathcal{F}$.
(see \cite[Theorem 58, p.52]{DelMey-B}).
$L$ is said to satisfy \textbf{Jacod's condition} if for almost every $t\in I$ there exists 
a (non-random) $\sigma$-finite measure $\eta_t$ on the Polish space $(E,\mathcal{E})$ in which it takes values 
such that 
$$P_t(\omega,L\in.)\ll\eta_t(.) \ a.s.$$
In this case $\eta$ can be taken to be constant with time, as well as the law of $L$ (\cite{Jacod-book}), 
and we can define the Jacod conditional density $q_t^L$ of $L$ as
\begin{equation}
q_t^L(\omega,x):=\frac{dP_t(\omega,.)}{d\eta(.)}\bigg|_{\sigma(L)} (L)^{-1}(x) \label{eq:Jac-density}
\end{equation}
\end{proposition}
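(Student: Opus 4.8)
The only assertions in the statement requiring an argument are that the family $(\eta_t)_{t\in I}$ may be replaced by a single measure, independent of $t$, which moreover can be taken to be the law $\mathbb{P}_L:=\mathbb{P}\circ L^{-1}$ of $L$, and that the conditional density $q^L_t$ of \eqref{eq:Jac-density} is then well defined. The plan is to show that Jacod's condition already forces, for almost every $t\in I$, the absolute continuity $P_t(\omega,L\in\cdot)\ll\mathbb{P}_L$ for $\mathbb{P}$-a.e.\ $\omega$; the choice $\eta:=\mathbb{P}_L$ then serves simultaneously at all such $t$, which yields all three assertions at once.

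Fix a time $t$ at which the hypothesis holds. Since every $\sigma$-finite measure is equivalent to a probability measure and absolute continuity is unaffected by replacing the reference measure by an equivalent one, I may assume $\eta_t$ is a probability measure on $(E,\mathcal{E})$. Because $(E,\mathcal{E})$ is standard Borel and $\omega\mapsto P_t(\omega,L\in\cdot)$ is a probability kernel which, $\mathbb{P}$-a.s., is absolutely continuous with respect to $\eta_t$, there is a jointly measurable version $f_t(\omega,x)\ge 0$ of the Radon--Nikodym densities $dP_t(\omega,L\in\cdot)/d\eta_t$; this is a standard disintegration fact, which one can obtain by a martingale argument along a countable algebra generating $\mathcal{E}$. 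Integrating over $\omega$ and using the defining property $\mathbb{P}_L(A)=\int_\Omega P_t(\omega,L\in A)\,\mathbb{P}(d\omega)$ of the conditional distribution gives $\mathbb{P}_L(dx)=g_t(x)\,\eta_t(dx)$, where $g_t(x):=\int_\Omega f_t(\omega,x)\,\mathbb{P}(d\omega)$ is $\eta_t$-a.e.\ finite since $\int_E g_t\,d\eta_t=1$.

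The core step is to control the single set $B_t:=\{x\in E:\,g_t(x)=0\}$. From $\int_\Omega\big(\int_{B_t}f_t(\omega,x)\,\eta_t(dx)\big)\,\mathbb{P}(d\omega)=\int_{B_t}g_t\,d\eta_t=0$ and Tonelli's theorem, $f_t(\omega,\cdot)=0$ holds $\eta_t$-a.e.\ on $B_t$ for $\mathbb{P}$-a.e.\ $\omega$, i.e.\ $P_t(\omega,L\in B_t)=0$ for $\mathbb{P}$-a.e.\ $\omega$. Now let $A\in\mathcal{E}$ be any $\mathbb{P}_L$-null set; then $\int_A g_t\,d\eta_t=0$ forces $A\setminus B_t$ to be $\eta_t$-null, so for $\mathbb{P}$-a.e.\ $\omega$ we get $P_t(\omega,L\in A)\le P_t(\omega,L\in B_t)+P_t(\omega,L\in A\setminus B_t)=0$, the second term vanishing a.s.\ because $P_t(\omega,L\in\cdot)\ll\eta_t$ a.s. The decisive point is that the $\mathbb{P}$-null exceptional set attached to $B_t$ does not depend on $A$; intersecting it with the one on which $P_t(\omega,L\in\cdot)\ll\eta_t$, we conclude that $P_t(\omega,L\in\cdot)\ll\mathbb{P}_L$ for $\mathbb{P}$-a.e.\ $\omega$.

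Finally, since $\mathbb{P}_L$ does not depend on $t$, taking $\eta:=\mathbb{P}_L$ supplies one $\sigma$-finite (indeed probability) measure dominating $P_t(\omega,L\in\cdot)$ for $\mathbb{P}$-a.e.\ $\omega$ at every $t$ where Jacod's condition is assumed, which proves that $\eta$ may be chosen constant in time and equal to the law of $L$. With this choice $dP_t(\omega,L\in\cdot)/d\eta$ exists a.s., so $q^L_t$ as in \eqref{eq:Jac-density} is well defined ($\mathbb{P}\times\eta$-a.e.), and a jointly measurable (and, by a predictable projection, predictable) selection of these densities --- again relying on the standard-Borel structure of $(E,\mathcal{E})$ --- provides the version used afterwards. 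I expect the only genuinely delicate ingredient to be the construction of the jointly measurable density $f_t$, where the Polishness of $(E,\mathcal{E})$ is needed, together with the bookkeeping of the third paragraph that compresses the uncountable family of $\mathbb{P}_L$-null sets into the single set $B_t$.
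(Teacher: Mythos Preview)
The paper does not supply its own proof of this proposition: it is stated as a definition-cum-proposition, and the nontrivial assertion that $\eta$ may be chosen time-independent and equal to the law of $L$ is simply attributed to \cite{Jacod-book}. There is therefore nothing in the paper to compare your argument against.

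That said, your proof is correct and is in fact the standard argument that appears in Jacod's paper (Lemme~1.8 of \cite{Jacod-book}). The crucial step you isolate --- reducing the uncountable family of $\mathbb{P}_L$-null sets to the single set $B_t=\{x:g_t(x)=0\}$, so that the $\mathbb{P}$-exceptional set of $\omega$'s can be chosen independently of the null set $A$ --- is exactly Jacod's device. Your remarks on the need for Polishness to obtain a jointly measurable density, and on subsequently upgrading to a predictable version, are likewise the standard complements. So while you have not reproduced the paper's proof (there is none), you have reproduced the one it cites.
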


This condition is extended to successive expansions at multiple times in \cite{KLP} 
of which the following proposition is a corollary in the particular case 
where the expansion times are deterministic.

\begin{proposition}\label{prop-KLP-to-process} 
Let $n\geq1$. Suppose that $\big(X_{t_0^n},X_{t_1^n}-X_{t_0^n},...,X_{t_{\ell(n)+1}^n}-X_{t_{\ell(n)}^n}\big)$ satisfies Jacod's condition and for $k\leq \ell(n)$ let $q^{k,n}$ be the conditional density of $L^k:=\big(X_{t_0^n},X_{t_1^n}-X_{t_0^n},...,X_{t_{k+1}^n}-X_{t_{k}^n}\big)$ defined by Equation \eqref{eq:Jac-density}. 
Then every continuous $\mathcal{F}$-local martingale $M$ is a ${\mathcal{G}}^n$-semimartingale on $[0,T]$
with decomposition
$$M=:M^n+\int_0^.\alpha_s^n d[M,M]_s,$$
where $\alpha^n$ is the $\mathcal{G}^n$-predictable process defined by
\begin{equation}
\alpha_t^n(\omega) :=\! \sum_{k=0}^{\ell(n)} 1_{t_k^n\leq t<t_{k+1}^n} \! 
\frac{1}{q_{s_-}^{k,n}(.,x_k)}
\frac{d[q^{k,n}(.,x_k),M]_s}{d[M,M]_s}\bigg|_{x_k=(X_{t_0},X_{t_1}\!-\!X_{t_0},\cdots,X_{t_k}\!-\!X_{t_{k-1}})} \label{eq:alpha-density}.
\end{equation}
and where the existence of the Radon-Nykodym derivative is ensured by the Kunita-Watanabe inequality.
\end{proposition}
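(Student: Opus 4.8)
The plan is to deduce the proposition as a direct specialization of the successive-initial-enlargement theorem of \cite{KLP} (Theorem 4 there) to \emph{deterministic} expansion times and to the particular random variables formed by the sampled increments of $X$. First I would settle the purely filtration-theoretic point: since $X^n$ is constant on each interval $[t_k^n, t_{k+1}^n)$, its natural filtration is constant there as well, with $\mathcal{H}_t^n = \sigma(X_{t_0^n}, X_{t_1^n}-X_{t_0^n}, \dots, X_{t_k^n}-X_{t_{k-1}^n})$ for $t_k^n \le t < t_{k+1}^n$. Hence on that block $\check{\mathcal{G}}^n$ coincides with the initial enlargement of $\mathcal{F}$ by the finite-dimensional vector of increments observed by time $t_k^n$, so that $\check{\mathcal{G}}^n$ is exactly the filtration obtained by performing successive initial enlargements at the times $t_1^n, t_2^n, \dots$. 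Invoking Proposition \ref{prop-usual-hyp}, the passage from $\check{\mathcal{G}}^n$ to its right-continuous regularization $\mathcal{G}^n$ changes neither the semimartingale property nor the decomposition, so it suffices to work with $\check{\mathcal{G}}^n$.

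Next I would run Jacod's theorem block by block. On $[0, t_1^n)$ the enlarging vector has a conditional density $q^{0,n}$ with respect to $\mathcal{F}$ by hypothesis, so the classical Jacod theorem (see \cite{PPbook}) makes $M$ a semimartingale there with drift $\frac{1}{q_{s-}^{0,n}(\cdot, x_0)}\frac{d[q^{0,n}(\cdot,x_0),M]_s}{d[M,M]_s}$ evaluated at the observed value $x_0$; inductively, once $M$ is known to be a semimartingale for the enlargement carrying the first $k$ increments, the further enlargement at $t_k^n$ by the next increment again satisfies Jacod's condition, and the drift picks up exactly the term displayed in \eqref{eq:alpha-density} on $[t_k^n, t_{k+1}^n)$. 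The fact that Jacod's condition for each such step follows from Jacod's condition for the full increment vector, and that the successive densities compose in the stated way, is precisely the content of Theorem 4 of \cite{KLP}, which I would quote. Assembling the block contributions with the indicators $1_{t_k^n \le t < t_{k+1}^n}$ gives the formula for $\alpha^n$; on each block the factor $s \mapsto \frac{1}{q_{s-}^{k,n}(\cdot,x_k)}\frac{d[q^{k,n}(\cdot,x_k),M]_s}{d[M,M]_s}$ is $\mathcal{F}$-predictable in $s$ while the evaluation point $x_k$ is $\mathcal{G}_{t_k^n}^n$-measurable, so $\alpha^n$ is $\mathcal{G}^n$-predictable.

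Finally I would record the only analytic point: the Radon--Nikodym derivative $\frac{d[q^{k,n}(\cdot,x_k),M]_s}{d[M,M]_s}$ exists because the Kunita--Watanabe inequality forces $d[q^{k,n}(\cdot,x_k),M] \ll d[M,M]$. None of these steps is delicate; the hard part will be the bookkeeping match with \cite{KLP} --- verifying that the density relevant to the $k$-th block is the density of the increment vector with respect to $\mathcal{F}$ itself (not conditioned on the earlier increments), and that the general successive-enlargement drift formula of \cite{KLP} collapses, for deterministic expansion times, to the single block-wise term in \eqref{eq:alpha-density} with no cross-block contributions. This is exactly why the statement is posed as a corollary of \cite{KLP} rather than reproved ab initio.
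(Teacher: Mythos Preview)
Your proposal is correct and follows exactly the paper's own approach: the paper presents this proposition explicitly as ``a corollary [of Theorem 4 in \cite{KLP}] in the particular case where the expansion times are deterministic,'' without giving any further proof. Your write-up simply unpacks that citation---identifying $\check{\mathcal{G}}^n$ with the successive initial enlargement, invoking Proposition \ref{prop-usual-hyp} for the right-continuous regularization, and noting the Kunita--Watanabe point---which is useful exposition but not a different route.
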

\noindent
We emphasize that, taking $\eta$ to be the law of $L$, the conditional densities  $q^{k,n}$ can be computed explicitly from the conditional finite dimensional distributions of $X$ as
\begin{align*}
q_t^{k,n}(\omega,x):=\frac{P_t\big(\omega,(X_{t_0^n},X_{t_1^n}-X_{t_0^n},...,X_{t_{k+1}^n}-X_{t_{k}^n})\in dx\big)}{\mathbb{P}\big((X_{t_0^n},X_{t_1^n}-X_{t_0^n},...,X_{t_{k+1}^n}-X_{t_{k}^n})\in dx\big)}
\end{align*}

We apply here the results of Section \ref{sec-cv-fil} on convergence of filtrations together with Proposition \ref{prop-KLP-to-process} to obtain the semimartingale property and decomposition of $M$ in the filtration $\mathcal{G}$ augmented with the process $X$. \\ \\
We postpone to Paragraph \ref{sub-class-x} the discussion on the minor technical assumptions required on the process $X$ to fit in our framework by introducing the class $\mathbb{X}^\pi$, where $\pi$ is a discretization scheme. 

\begin{definition}[Class $\mathbb{X}^\pi$]\label{def-class-x} We say that the c{\`a}dl{\`a}g process $X$ is of Class $\mathbb{X}^{\pi}$, or of Class $\mathbb{X}$ if there is no ambiguity, if
$$\forall t\in I,\mathcal{H}_{t^-}^n\xrightarrow[n\to 1]{L^2}\mathcal{H}_{t^-}.$$
\end{definition}

\begin{proposition} The c{\`a}dl{\`a}g process $X$ is of Class $\mathbb{X}$ if one of the following holds:
\begin{enumerate}[(i)]
\item $\mathbb{P}(\Delta X_t\neq0)=0$ for any fixed time $t>0$.
\item[(i')] $X$ is continuous
\item $\mathcal{H}$ is (quasi-) left continuous.
\item[(ii')] $X$ is a Hunt process (e.g a L\'evy process)
\item $X$ jumps only at totally inaccessible stopping times
\item $\pi$ contains all the fixed times of discontinuity of $X$ after a given rank
\end{enumerate}
\end{proposition}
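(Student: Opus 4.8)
The goal is to show that each of the listed conditions implies the class $\mathbb{X}$ property, i.e. that $\mathcal{H}^n_{t^-} \xrightarrow[n\to\infty]{L^2} \mathcal{H}_{t^-}$ for all $t\in I$. The whole strategy is to reduce everything to the single fact (Proposition~\ref{prop-VHn-providentiel-martingale}): for a \emph{non-decreasing} sequence of $\sigma$-algebras $\mathcal{Y}^n$ one always has $\mathcal{Y}^n \xrightarrow{L^2} \bigvee_n \mathcal{Y}^n$. So the heart of the matter is to identify $\bigvee_n \mathcal{H}^n_{t^-}$ with $\mathcal{H}_{t^-}$. Since $\pi^n$ is refining, $(\mathcal{H}^n_s)_n$ is non-decreasing for each fixed $s$, and one checks that $\bigvee_n \mathcal{H}^n_s \supseteq \sigma(X_{t_i^n} : t_i^n \le s,\, n\ge 1)$; as the mesh goes to $0$, right-continuity of the paths of $X$ recovers $\sigma(X_u : u < s)$ from this countable family of evaluation points, so $\bigvee_n \mathcal{H}^n_s$ agrees with $\mathcal{H}_s$ up to the values $X_u$ for $u$ in the (at most countable) set of times that are never sampling points and at which $X$ may jump. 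Passing to $s \uparrow t$ (i.e.\ to $\mathcal{H}_{t^-}$) this discrepancy is exactly what the various conditions are designed to kill.

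With that reduction in hand, the six items are handled as follows. For (vi), $\pi$ eventually contains all fixed discontinuity times, so for $n$ large every fixed jump time $<t$ is a sampling point, hence $\bigvee_n \mathcal{H}^n_{t^-} = \mathcal{H}_{t^-}$ on the nose and Proposition~\ref{prop-VHn-providentiel-martingale} applies directly. For (i)/(i'): if $\mathbb{P}(\Delta X_u \ne 0)=0$ for each fixed $u$ (in particular if $X$ is continuous), then for any fixed $u<t$ not in $\bigcup_n \pi^n$ we have $X_u = \lim_k X_{u_k}$ a.s.\ along sampling points $u_k \downarrow u$ by right-continuity, and $X_{u-} = \lim X_{v_k}$ along $v_k \uparrow u$; since $X_u = X_{u-}$ a.s., $X_u$ is measurable w.r.t.\ $\bigvee_n\mathcal{H}^n_{t^-}$ (completed with null sets), so again $\bigvee_n \mathcal{H}^n_{t^-} = \mathcal{H}_{t^-}$. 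Items (ii)/(ii') and (iii) are the same observation through a different door: quasi-left-continuity of $\mathcal{H}$ (automatic for Hunt processes, e.g.\ Lévy processes), or the statement that $X$ jumps only at totally inaccessible times, forces $\mathbb{P}(\Delta X_u\ne 0)=0$ at every fixed (predictable) time $u$, reducing to case (i). So in all cases the content is: the countably many sampling times, together with right-continuity, generate all of $\mathcal{H}_{t^-}$ because no jump mass sits at a deterministic time outside the grid.

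The one routine-but-necessary technical point I would spell out is the passage from $\mathcal{H}^n_s$ to $\mathcal{H}^n_{t^-}$: since $X^n$ is piecewise constant, $\mathcal{H}^n_{t^-} = \bigvee_{s<t}\mathcal{H}^n_s = \sigma(X_{t_i^n} : t_i^n < t)$, and one must be a little careful that the non-decreasingness in $n$ and the $\sup_{s<t}$ commute — which they do, by monotone class / the fact that a countable union of countable index sets is countable. Then $\bigvee_n \mathcal{H}^n_{t^-} = \sigma\big(X_{t_i^n} : t_i^n < t,\ n\ge 1\big)$, a $\sigma$-algebra generated by countably many random variables, and the arguments above show it equals $\mathcal{H}_{t^-}$ under each hypothesis; an application of Proposition~\ref{prop-VHn-providentiel-martingale} finishes.

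**Main obstacle.** The only genuinely delicate step is proving the identity $\bigvee_n \mathcal{H}^n_{t^-} = \mathcal{H}_{t^-}$ — specifically, recovering $X_u$ (hence $\sigma(X_u,\ u<t)$) from the countable grid of sampled values when $u$ itself is never a grid point. Right-continuity of paths gives $X_u = \lim_{k} X_{u_k}$ for $u_k \downarrow u$ along sampling points, but $u_k > u$ in general, so this limit a priori only lands in $\mathcal{H}^{\,}_{t}$, not $\mathcal{H}_{t^-}$; to land in $\mathcal{H}_{t^-}$ one approaches $u$ from the \emph{left}, which yields $X_{u-}$, not $X_u$, and the two coincide only when there is no jump at $u$ — precisely the hypothesis in (i), and precisely what (ii)–(iii) guarantee at deterministic times, and what (vi) sidesteps by putting $u$ on the grid. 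Making this left/right bookkeeping airtight (and handling the completion by null sets correctly so that ``a.s.\ equal'' upgrades to ``measurable'') is where the real work lies; everything else is a direct invocation of the monotone-sequence convergence of $\sigma$-algebras.
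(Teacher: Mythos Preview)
Your reduction to Proposition~\ref{prop-VHn-providentiel-martingale} via the identity $\bigvee_n \mathcal{H}^n_{t^-} = \mathcal{H}_{t^-}$ is correct, and for items (ii) and (ii') it is exactly the paper's argument (this is what Section~\ref{sub-class-x} calls Class $\mathbb{X}_0$). For item (i), however, the paper takes a genuinely different route: rather than identifying the limit $\sigma$-algebra directly, it first establishes a characterization of $L^p$-convergence of filtrations generated by c\`adl\`ag processes through convergence of finite-dimensional distributions (Propositions~\ref{prop-cv-fun-to-filtr} and~\ref{prop-cv-fdd-to-filtr}), and then uses the a.s.\ pointwise convergence $X^n_u \to X_u$ (which holds exactly when $\mathbb{P}(\Delta X_u\neq 0)=0$) to feed that criterion. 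Your approach is more elementary and has the virtue of unifying all six items under a single reduction; the paper's approach buys a reusable f.d.d.\ criterion for convergence of filtrations that applies beyond the refining-grid setting. Items (iii) and (vi) are not spelled out separately in the paper's Section~\ref{sub-class-x}; your reductions of (iii) to (i) and of (vi) to the grid containing all fixed discontinuities are the natural ones.

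One remark on your ``main obstacle'': the left-versus-right bookkeeping you flag is not actually an obstruction. For any $u$ with $0\le u<t$, density of $\bigcup_n\pi^n$ furnishes grid points $u_k\in(u,t)$ with $u_k\downarrow u$, and right-continuity gives $X_{u_k}\to X_u$; since each $u_k<t$, each $X_{u_k}$ already sits in $\bigvee_n\mathcal{H}^n_{t^-}$. So the identity $\bigvee_n\mathcal{H}^n_{t^-}=\mathcal{H}_{t^-}$ holds for \emph{every} c\`adl\`ag $X$, and your left-limit detour (invoking (i) to equate $X_{u^-}$ with $X_u$) is unnecessary. This does not make your argument wrong; it only means the hypothesis in (i) is doing less work than your write-up suggests.
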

\begin{proof}
See Paragraph \ref{sub-class-x}.
\end{proof}

\noindent
Let us emphasize that that all conditions except the last depends only on the properties of $X$, and not on the choice of the discretization $\pi$. The next lemma extends the convergence of the filtrations $\mathcal{H}^n$ to the filtrations $\mathcal{G}^n$.

\begin{lemma}\label{lemma-conv-Gn}
If $X$ is of Class $\mathbb{X}$, then $\mathcal{G}^n\xrightarrow[n\to\infty]{L^2}\check{\mathcal{G}}$.
\end{lemma}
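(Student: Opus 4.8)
The plan is to strip away, one at a time, the two features that separate $\mathcal{G}^n$ and $\check{\mathcal{G}}$ from the discretized natural filtrations of $X$ — the right-continuous regularization, and the presence of $\mathcal{F}$ — so that everything reduces to the convergence of the $\mathcal{H}^n$, which is where Class $\mathbb{X}$ actually enters. First, since by construction $\mathcal{G}^n_t=\bigcap_{u>t}\check{\mathcal{G}}^n_u$ is exactly the right-continuous filtration associated with $\check{\mathcal{G}}^n=\mathcal{F}\vee\mathcal{H}^n$, Corollary~\ref{cor-doncDiscRC} shows that it suffices to prove the weak convergence $\check{\mathcal{G}}^n\xrightarrow[n\to\infty]{L^2}\check{\mathcal{G}}$, that is $\mathcal{F}_t\vee\mathcal{H}^n_t\xrightarrow[n\to\infty]{L^2}\mathcal{F}_t\vee\mathcal{H}_t$ for each fixed $t\in I$. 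Second, because the constant sequence $(\mathcal{F}_t)_{n\geq1}$ converges in $L^2$ to $\mathcal{F}_t$ trivially, Proposition~\ref{prop-YnvZn-to-YvZ} reduces this further to the single statement $\mathcal{H}^n_t\xrightarrow[n\to\infty]{L^2}\mathcal{H}_t$, for every $t\in I$.

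To obtain $\mathcal{H}^n_t\to\mathcal{H}_t$ I would exploit that, the subdivisions $\pi^n$ being refining, the sequence $(\mathcal{H}^n_t)_{n\geq1}$ is non-decreasing, so by Proposition~\ref{prop-VHn-providentiel-martingale} it converges in $L^2$ to $\bigvee_n\mathcal{H}^n_t=\sigma(X_s : s\in\bigcup_n\pi^n,\, s\leq t)$; since a $\sigma$-algebra contained in an $L^2$-limit is again an $L^2$-limit, it is enough to check $\mathcal{H}_t\subseteq\bigvee_n\mathcal{H}^n_t$ up to a $\mathbb{P}$-null set. For $s<t$ the path right-continuity of $X$ together with the density of $\bigcup_n\pi^n$ (its mesh tends to $0$) gives $X_s=\lim_{r\downarrow s,\ r\in\bigcup_n\pi^n}X_r$, whence $X_s$ is $\bigvee_n\mathcal{H}^n_t$-measurable and $\mathcal{H}_{t^-}\subseteq\bigvee_n\mathcal{H}^n_t$; this much is the Class $\mathbb{X}$ property. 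The only remaining contribution to $\mathcal{H}_t$ is $\sigma(X_t)$, and for this I would invoke the characterization of Class $\mathbb{X}$ in the Proposition listing its sufficient conditions above (established in Paragraph~\ref{sub-class-x}): at every $t$ one is in one of two situations — either $\Delta X_t=0$ a.s., so that $X_t=X_{t^-}$ is already $\mathcal{H}_{t^-}$-measurable, or $t$ is a fixed time of discontinuity of $X$ which, under the operative condition, lies in $\pi^n$ from some rank on, so that $X_t$ is $\mathcal{H}^n_t$-measurable for all large $n$. In both cases $\sigma(X_t)\subseteq\bigvee_n\mathcal{H}^n_t$ up to a null set, hence $\mathcal{H}_t\subseteq\bigvee_n\mathcal{H}^n_t$ and $\mathcal{H}^n_t\to\mathcal{H}_t$, which closes the argument.

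The step I expect to be the main obstacle is this last one. A discretized filtration $\mathcal{H}^n_t$ typically records only the left limit $X_{t^-}$ and not the value $X_t$ itself, so that at a time $t$ where $X$ has a fixed discontinuity not caught by the scheme $\pi$, the convergence $\mathcal{H}^n_t\to\mathcal{H}_t$ genuinely fails, and with it $\check{\mathcal{G}}^n_t\to\check{\mathcal{G}}_t$ and $\mathcal{G}^n_t\to\check{\mathcal{G}}_t$; the role of the Class $\mathbb{X}$ hypothesis — and of allowing $\pi$ to contain the fixed discontinuities of $X$ — is exactly to rule this degeneracy out. By contrast the reductions in the first paragraph are routine: Corollary~\ref{cor-doncDiscRC} ensures that nothing is lost in passing from $\check{\mathcal{G}}^n$ to its right-continuous regularization $\mathcal{G}^n$, and Proposition~\ref{prop-YnvZn-to-YvZ} that adjoining the fixed filtration $\mathcal{F}$ is harmless.
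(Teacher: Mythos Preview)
Your two reductions --- passing from $\mathcal{G}^n$ to $\check{\mathcal{G}}^n$ via Corollary~\ref{cor-doncDiscRC}, and then stripping off $\mathcal{F}$ via Proposition~\ref{prop-YnvZn-to-YvZ} --- are exactly the paper's proof, just run in the reverse order. The paper then takes the remaining convergence $\mathcal{H}^n\xrightarrow{L^2}\mathcal{H}$ as immediate from the definition of Class~$\mathbb{X}$ and stops there.

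Where you depart from the paper is in your second paragraph, where you attempt to justify $\mathcal{H}^n_t\to\mathcal{H}_t$ more carefully. You correctly identify that the only delicate contribution is $\sigma(X_t)$ at the right endpoint. However, your treatment of that piece invokes the dichotomy ``either $\Delta X_t=0$ a.s.\ or $t$ is a fixed discontinuity lying eventually in $\pi^n$'', which is drawn from the list of \emph{sufficient conditions} for Class~$\mathbb{X}$, not from its abstract definition (Definition~\ref{def-class-x}), which only asserts $\mathcal{H}^n_{t^-}\to\mathcal{H}_{t^-}$. Conditions such as quasi-left-continuity of $\mathcal{H}$ or totally inaccessible jump times do not fit your two-case split. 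So your argument does not establish the lemma from the hypothesis ``$X$ is of Class~$\mathbb{X}$'' as stated; it establishes it from specific instances. Note also that your derivation of $\mathcal{H}_{t^-}\subseteq\bigvee_n\mathcal{H}^n_t$ uses only right-continuity of $X$ and density of $\bigcup_n\pi^n$ --- it does not use Class~$\mathbb{X}$ at all, contrary to your parenthetical ``this much is the Class~$\mathbb{X}$ property''. In short: the skeleton matches the paper, but your extra justification bypasses the actual hypothesis rather than uses it; the paper simply asserts the step.
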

\begin{proof}
It follows from the definition of Class $\mathbb{X}$ that $\mathcal{H}^n\xrightarrow[n\to\infty]{L^2}{\mathcal{H}}$. By Proposition \ref{prop-YnvZn-to-YvZ} we also have $\check{\mathcal{G}}^n\xrightarrow[n\to\infty]{} \check{\mathcal{G}}$, and by Corollary \ref{cor-doncDiscRC} the convergence extends to the right-continuous filtrations $\mathcal{G}^n$ and $\mathcal{G}^n\xrightarrow[n\to\infty]{L^2}\check{\mathcal{G}}$.
\end{proof}

We can now state our main theorem.

\begin{theorem}[Expansion with a stochastic process] \label{thm-Tprocess}
Let $X$ be a stochastic process of Class $\mathbb{X}^\pi$ for some sequence of subdivisions 
$\pi^n:=(t_i^n)_{i=1}^{\ell(n)}$ of $[0,T]$ and $M$ a continuous $\mathcal{F}$-local martingale. Suppose that for every $n\geq1$ the random variable 
$\big( X_{0}, X_{t_1^n} - X_{t_0^n}, ..., X_{T}-X_{t_{\ell(n)}^n}\big)$ satisfies Jacod's condition 
and let $\alpha^n$ be a $\mathcal{G}^n$-predictable version of the process
$$\alpha_t^n(\omega) :=\! \sum_{k=0}^{\ell(n)} 1_{t_k^n\leq t<t_{k+1}^n} \! 
\frac{1}{q_{s_-}^{k,n}(.,x_k)}
\frac{d[q^{k,n}(.,x_k),M]_s}{d[M,M]_s}\bigg|_{x_k=(X_{t_0},X_{t_1}\!-\!X_{t_0},\cdots,X_{t_k}\!-\!X_{t_{k-1}})}$$
as defined by Equation \eqref{eq:alpha-density} in Proposition \ref{prop-KLP-to-process}. Then:
\begin{enumerate}[1.] 
\item \label{pt-T2-1} If $sup_{n\geq1}\mathbb{E}\int_0^T|\alpha_t^n|d[M,M]_t<\infty$, $M$ is a continuous $\mathcal{G}$-semimartingale.
\item \label{pt-T2-2} If $\sup_n\mathbb{E}\int_0^T(\alpha_s^n)^2d[M,M]_s<\infty$, $M$ is a continuous $\mathcal{G}$-semimartingale with decomposition 
$$M=:\widetilde{M}+\int_0^.\alpha_sd[M,M]_s$$
where $\alpha\in\mathcal{S}^2(\mathcal{G},M)$ and $\mathbb{E}\int_0^T (\alpha_t^n-\alpha_t)^2d[M,M]_t\xrightarrow[n\to\infty]{}0$.
\end{enumerate}
\end{theorem}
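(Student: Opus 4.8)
The plan is to obtain the conclusion for the filtration $\check{\mathcal{G}}$ by combining the discrete decompositions of Proposition~\ref{prop-KLP-to-process} with the $L^p$-stability results of Section~\ref{sec-cv-fil}, and then to transfer everything to $\mathcal{G}$ via Proposition~\ref{prop-usual-hyp}. First I would record the three ingredients. (i) For each $n$, Jacod's condition on $\big(X_0,X_{t_1^n}-X_{t_0^n},\dots,X_T-X_{t_{\ell(n)}^n}\big)$ makes Proposition~\ref{prop-KLP-to-process} applicable, so $M$ is a $\mathcal{G}^n$-semimartingale with decomposition $M=M^n+A^n$, $A^n:=\int_0^.\alpha^n_s\,d[M,M]_s$, and $\alpha^n\in\mathcal{S}(\mathcal{G}^n)$ the explicit $\mathcal{G}^n$-predictable process in the statement. (ii) Since $\pi^n$ is refining, $\mathcal{H}^n_t\subset\mathcal{H}^{n+1}_t$, hence $\check{\mathcal{G}}^n\subset\check{\mathcal{G}}^{n+1}$ and $\mathcal{G}^n\subset\mathcal{G}^{n+1}$, so $(\mathcal{G}^n)_{n\geq1}$ is a non-decreasing sequence of right-continuous filtrations; and since $X$ is of Class~$\mathbb{X}^\pi$, Lemma~\ref{lemma-conv-Gn} gives $\mathcal{G}^n\xrightarrow[n\to\infty]{L^2}\check{\mathcal{G}}$. (iii) As $\mathcal{F}$ satisfies the usual hypotheses, each $\check{\mathcal{G}}_u=\mathcal{F}_u\vee\mathcal{H}_u$ contains every $\mathbb{P}$-null set, so $\mathcal{G}_t=\bigcap_{u>t}\check{\mathcal{G}}_u$ is precisely the right-continuous filtration built from $\check{\mathcal{G}}$ in Proposition~\ref{prop-usual-hyp}, whose part~(1.) then carries any $\check{\mathcal{G}}$-semimartingale, together with its decomposition, over to $\mathcal{G}$ unchanged.

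For point~\ref{pt-T2-1} I would note that, $[M,M]$ being continuous and non-decreasing, the total variation of $A^n$ on $[0,T]$ equals $\int_0^T|\alpha^n_t|\,d[M,M]_t$, so the hypothesis $\sup_n\mathbb{E}\int_0^T|\alpha^n_t|\,d[M,M]_t<\infty$ is exactly the uniform bound $\sup_n\mathbb{E}\int_0^T d|A^n_t|<\infty$ required by Theorem~\ref{thm-stabSM}. Combined with $\mathcal{G}^n\xrightarrow[n\to\infty]{L^2}\check{\mathcal{G}}$, that theorem yields that $M$ is a $\check{\mathcal{G}}$-semimartingale; it is continuous since $M$ is, and Proposition~\ref{prop-usual-hyp}(1.) upgrades this to a continuous $\mathcal{G}$-semimartingale.

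For point~\ref{pt-T2-2} the hypothesis $\sup_n\mathbb{E}\int_0^T(\alpha^n_s)^2\,d[M,M]_s<\infty$, together with the non-decreasing property of $(\mathcal{G}^n)$ and $\mathcal{G}^n\xrightarrow[n\to\infty]{L^2}\check{\mathcal{G}}$, are exactly the hypotheses of Theorem~\ref{thm-convID} (with the role of its ``$\mathcal{G}$'' played by our $\check{\mathcal{G}}$). I would invoke it to get that $M$ is a $\check{\mathcal{G}}$-semimartingale with decomposition $M=\widetilde{M}+\int_0^.\alpha_s\,d[M,M]_s$ and $\alpha\in\mathcal{S}^2(\check{\mathcal{G}},M)$; since $\check{\mathcal{G}}_t\subset\mathcal{G}_t$, every $\check{\mathcal{G}}$-predictable process is $\mathcal{G}$-predictable, so in fact $\alpha\in\mathcal{S}^2(\mathcal{G},M)$. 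The convergence $\mathbb{E}\int_0^T(\alpha^n_t-\alpha_t)^2\,d[M,M]_t\to0$ is already contained in the proof of Theorem~\ref{thm-convID}: there $(\alpha^n)$ is shown to be Cauchy, hence convergent, in $L^2(M)=L^2([0,T]\times\Omega,\mathcal{P},d[M,M]\times d\mathbb{P})$, and by uniqueness of the information drift (part~(a) of Theorem~\ref{thm-alpha-lm-deflator}) the limit must coincide with $\alpha$. A final application of Proposition~\ref{prop-usual-hyp}(1.) moves the decomposition to $\mathcal{G}$.

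The substance of the theorem is really the preparatory work of the previous sections; here the only delicate points are bookkeeping: verifying that $(\mathcal{G}^n)_{n\geq1}$ is non-decreasing and that $\mathcal{G}$ is the right-continuous hull of $\check{\mathcal{G}}$ in the sense of Proposition~\ref{prop-usual-hyp}, so that the passage from $\check{\mathcal{G}}$-decompositions to $\mathcal{G}$-decompositions is automatic; checking that the two integrability hypotheses on $\alpha^n$ translate verbatim into the total-variation bound needed for Theorem~\ref{thm-stabSM} and the $L^2(M)$-norm bound needed for Theorem~\ref{thm-convID}; and recalling that the $L^2(M)$-convergence of $\alpha^n$ to $\alpha$ is a by-product of the Cauchy argument inside the proof of Theorem~\ref{thm-convID} once uniqueness of the information drift is used. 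I do not anticipate any genuine obstacle beyond lining up the hypotheses of Proposition~\ref{prop-KLP-to-process}, Lemma~\ref{lemma-conv-Gn}, Theorem~\ref{thm-stabSM}, and Theorem~\ref{thm-convID} with what is assumed in the statement.
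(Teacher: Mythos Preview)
Your proposal is correct and follows exactly the paper's approach: invoke Proposition~\ref{prop-KLP-to-process} for the discrete decompositions, Lemma~\ref{lemma-conv-Gn} for the $L^2$-convergence of filtrations, and then apply Theorem~\ref{thm-stabSM} and Theorem~\ref{thm-convID} respectively for the two points. If anything, you are more careful than the paper in distinguishing $\check{\mathcal{G}}$ from $\mathcal{G}$ and in making explicit the transfer via Proposition~\ref{prop-usual-hyp}, and in observing that the $L^2(M)$-convergence of $\alpha^n$ to $\alpha$ is extracted from the Cauchy argument inside the proof of Theorem~\ref{thm-convID}; the paper's proof is a terse three-line assembly of the same references.
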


\noindent
We recall that $\alpha\in\mathscr{H}^2(\mathcal{G},M)$ means that $\alpha$ is $\mathcal{G}$-predictable and $\mathbb{E}\int_0^T \alpha_t^2d[M,M]_s<\infty$.

\begin{proof}[Proof of the theorem]
Proposition \ref{prop-KLP-to-process} shows that, for every $n\geq 1$, $M$ is a $\mathcal{G}^n$ semimartingale with decomposition $M=:M^n+\int_0^.\alpha_s^nd[M,M]_s$ for a $\mathcal{G}^n$-predictable process $\alpha^n$. Lemma \ref{lemma-conv-Gn} establishes the convergence $\mathcal{G}^n\xrightarrow[n\to\infty]{L^2}\mathcal{G}$, and the two points of the theorem now follow respectively from Theorem \ref{thm-stabSM} and Theorem \ref{thm-convID}.
\end{proof}

Let us now consider a new dynamical example inspired by the 1973 movie 
``The Sting'' by George Roy Hill, and starring Paul Newman and Robert Redford. We can think of it as a metaphor for high-frequency trading.
In the movie, two con men rip off horse gamblers 
by ``past posting'' bets after the closure.
In the context of high-frequency trading it is easy to imagine situations 
where the informational advantage could amount to a fixed advance in the future. This is appropriate for examples such as colocation, data frequency or a number of intermediate agents.
With this perspective we will suppose here 
that a given agent can see a small time $\delta$ in the future.
Of course, there is no hope that a local martingale would
remain a semimartingale if its future can be fully observed;
therefore in our models the anticipative signal needs to be corrupted with some noise. This seems realistic in any event.

\begin{example}
\label{ex-path-anticipation}
Let $X_t:=W_{t+\delta}+\epsilon_t$ 
where
$W$ is a Brownian motion and
 $\epsilon$ a stochastic process 
with marginals absolutely continuous with respect to Lebesgue measure
on a time interval.
For a given $t\geq 0$ we let also $(t_i^n)_i^n$ be 
a refining sequence of subdivisions of $[0,t]$
and denote $\phi_k$ the joint density of 
$\epsilon_{t_1},...,\epsilon_{t_k}$
as well as $\widetilde{\psi}_{k,i,n}(u_{k+1}, ..., u_i | u_{1}, ..., u_k)$ 
the conditional density of
$\epsilon_{t_{k+1}},...,\epsilon_{t_i}$
given $\epsilon_{t_1},...,\epsilon_{t_k}$
and
${\psi}_{k,n}(u_{k+1}| u_{1}, ..., u_k)$ 
the conditional density of
$\epsilon_{t_{k+1}}$
given $\epsilon_{t_1},...,\epsilon_{t_k}$.
We have
\begin{eqnarray*}
p_{s}^{i,n}(x) & := & P(X_{t_1}\leq x_1, ..., X_{t_i} \leq x_i | \mathcal{F}_s) \\
& = &
P(W_{t_1} + \epsilon_{t_1}\leq x_1, ..., W_{t_i} + \epsilon_{t_i}\leq x_i | \mathcal{F}_s) \\
& = & 
\iint...\int 
P(
W_{t_1} + u_1\leq x_1, ..., W_{t_i} + u_i\leq x_i 
| \mathcal{F}_s) \phi_i(u_1, ..., u_i) du_1...du_i
\end{eqnarray*}
Now suppose that $t_k\leq s < t_{k + 1}$. 
It is clear that for $ i\leq k$ the conditional density is
$p_s^{i,n}(x) = \phi_i(x_1-W_{t_1},...,x_i - W_{t_i})$;
for $i > k$,

\begin{eqnarray*}
&& P(X_{t_1}\leq x_1, ..., X_{t_i} \leq x_i | \mathcal{F}_s) \\
& = &
\iint...\int 
P(
W_{t_1} + u_1\leq x_1, ..., W_{t_k} + u_k\leq x_k 
| \mathcal{F}_s) \times\\
&&P(
W_{t_{k+1}} + u_{k+1}\leq x_{k+1}, ..., W_{t_n} + u_i\leq x_i 
| \mathcal{F}_s) \times\\
&& 
\phi_k(u_1, ..., u_k) 
\widetilde{\psi}_{k,i,n}(u_{k+1}, ..., u_i | u_{1}, ..., u_k)
du_1...du_k du_{k+1}...du_i \\
& = &
\iint...\int 
\Bigg[
\iint...\int 
 P(W_{t_{k+1}} + u_{k+1}\leq x_{k+1}, ..., W_{t_i} + u_i\leq x_i 
| \mathcal{F}_s) \times\\
&&\widetilde{\psi}_{k,i,n}(u_{k+1}, ..., u_i | u_{1}, ..., u_k)
du_{k+1}...du_i
\Bigg]
1_{W_{t_1} + u_1\leq x_1} ... 1_{W_{t_k} + u_k\leq x_k} 
\phi_k(u_1, ..., u_k) 
du_1...du_k. 
\end{eqnarray*}
It follows that for $i>k$ the conditional density is
{
\begin{align*}
p_s^{i,n}(x) 
 & =  
\phi_{k}(x_1-W_{t_1},...,x_k - W_{t_k})
\int\int...\int 
 \pi(t_{k+1} - s, W_s, x_{k+1} - u_{k+1}) 
 ... \\
 & ... \ \pi(t_i - t_{i-1}, x_{i-1} + u_{i-1}, x_i - u_i) 
 \widetilde{\psi}_{k,i,n}(u_{k+1}, ..., u_n | x_1-W_{t_1}, ..., x_k-W_{t_k})
 du_{k+1}...du_i
\end{align*}
}%
In fact we are interested in the limit
$\lim_{n\to\infty}\sum_{k=1}^n (p_{t_{k+1}} - p_{t_k}) (W_{t_{k+1}} - W_{t_k})$ so that we are interested in the interval $t_{i}\leq s< t_{i+1}$
on which we have the expression
\begin{equation*}
p_{s}^{i,n}(x) 
=
\phi_{i}(x_1-W_{t_1},...,x_i - W_{t_i})
\int 
 \pi(t_{i+1} - s, W_s, x_{i+1} - u_{i+1}) 
 \psi_{i,n}(u_{i+1} | x_1-W_{t_1}, ..., x_i-W_{t_i})
 du_{i+1}.
 \end{equation*}
 Therefore the information drift generated by discrete expansion with $X_{t_i}$ at each time $t_i$ exists and is given by
 \begin{eqnarray*}
 \alpha_t^n
 & := &
 \frac{d[p^{i,n},W]_t}{p_t^{i,n}(X) dt} \\
& = &
\int 
 \frac{1}{\pi}\frac{\partial\pi}{\partial x} (t_{i+1} - s, W_s, X_{t_{i+1}} + u_{i+1}) 
 \psi_{i,n}(u_{i+1} | \epsilon_{t_1}, ..., \epsilon_{t_i})
  du_{i+1} 
 \end{eqnarray*}
Thus the potential candidate for the $\mathcal{G}$-information drift is
\begin{equation*}
 \lim_{s\to t}
  \int 
  \frac{X_t + u - W_s}{t-s}
 \psi_{t}(u | \epsilon_r,r\leq s)
  du
\end{equation*} 
Precisely Theorem \ref{thm-Tprocess} shows that the limit information drift exists if the above limit is well defined in $L^2$. 
For instance, if $\epsilon$  is a Markov process with stationary increments with distribution $\kappa_{t-s}$ then
$\psi_t(u|\epsilon_r,r\leq s)
=
\kappa_{t-s}(u - \epsilon_s)$
and the above becomes
\begin{equation*}
 \lim_{s\to t}
  \int 
  \frac{X_t + u - W_s}{t-s}
 \kappa_{t-s}(u - \epsilon_s)
  du.
\end{equation*} 
The last expression highlights that 
the decay speed of the 
noise plays a key role in 
whether the information drift 
exists: sufficient noise alone
might be able to guarantee the existence of the information drift.
This was already suggested in previous analyses of
initial expansions with noise (e.g. \cite{Corcuera}).
\end{example}

\begin{remark}
Example \ref{ex-path-anticipation} shows that the correlation structure 
of the noise term $\epsilon_t$ has implications 
on the existence, or nonexistence, of an information drift.
This is also illustrated by the extreme case of white noise:
if $\epsilon_t$ in Example \ref{ex-path-anticipation} 
are i.i.d. centered random variables (generalized white noise), 
then for any non-decreasing sequence $(h_n)_{n\geq 1}$ with
$h_n\to\infty$ and $\frac{h_n}{n}\to 0$
the strong law of large numbers leads to
$$\frac{1}{n} \sum_{i=1}^n 
(W_{t - \frac{h_i}{n} + \delta} + \epsilon_{t-\frac{h_i}{n}})
\xrightarrow[n\to\infty]{a.s.}
W_{t+\delta}.$$
Since the terms of the sequence are all $\mathcal{G}_t$-measurable,
this implies that $W_t+\delta$ must be $\mathcal{G}_t$-measurable 
as well.
Thus, following the remark preceding Example \ref{ex-path-anticipation},
no information drift is possible in the case of additive white noise.
[Note: this argument was suggested by Jean Jacod.]
\end{remark}

\noindent Theorem \ref{thm-MCPlus} leads to a partial result in the converse direction.

\begin{proposition}
If $M$ is a $\mathcal{G}$-semimartingale with decomposition \\
$M=:\widetilde{M}+A$, then we have the pointwise convergence:
$$\forall t\geq 0,A_t^n\xrightarrow[n\to\infty]{L^1}A_t,$$ where $A_t^n:=\int_0^t\alpha_s^nd[M,M]_s,t\in I$.
\end{proposition}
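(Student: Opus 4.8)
The plan is to read off this statement as a direct application of Theorem~\ref{thm-MCPlus} to the non-decreasing family of discrete expansions $(\mathcal{G}^n)_{n\geq1}$ together with the limit filtration $\mathcal{G}$. I would check, in order, the ingredients that theorem requires. First, since $(\pi^n)_{n\geq1}$ is refining the filtrations $\mathcal{H}^n$ increase with $n$, hence so do the $\check{\mathcal{G}}^n$ and the $\mathcal{G}^n$, and each $\mathcal{G}^n_t=\bigcap_{u>t}(\mathcal{F}_u\vee\mathcal{H}^n_u)$ is right-continuous by construction. Second, under the standing Jacod hypothesis on the sampled increments $\big(X_0,X_{t_1^n}-X_{t_0^n},\dots,X_T-X_{t_{\ell(n)}^n}\big)$, Proposition~\ref{prop-KLP-to-process} says that for each $n$ the process $M$ is a $\mathcal{G}^n$-semimartingale whose \emph{canonical} decomposition is exactly $M=M^n+A^n$ with $A^n_t=\int_0^t\alpha^n_s\,d[M,M]_s$; so the $A^n$ appearing in Theorem~\ref{thm-MCPlus} is literally the one in the statement. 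Third, that $M$ is a $\mathcal{G}$-semimartingale with decomposition $M=\widetilde{M}+A$ is exactly the assumption of the proposition. Granting the inclusion $\mathcal{G}_t\subset\bigvee_{n\geq1}\mathcal{G}^n_t$ discussed next, Theorem~\ref{thm-MCPlus} then gives $A^n_t\to A_t$ in $L^1$ for every $t$, which is the claim.

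The only point beyond bookkeeping is the $\sigma$-algebra inclusion $\mathcal{G}_t\subset\bigvee_{n\geq1}\mathcal{G}^n_t$. Since $X$ is of Class~$\mathbb{X}$, Lemma~\ref{lemma-conv-Gn} provides $\mathcal{G}^n\xrightarrow[n\to\infty]{L^2}\check{\mathcal{G}}$. Fixing $t$ and $Y\in L^2(\check{\mathcal{G}}_t,\mathbb{P})$, on the one hand $\mathbb{E}[Y\mid\mathcal{G}^n_t]\to Y$ in $L^2$ by this convergence, and on the other hand the monotonicity of $(\mathcal{G}^n_t)_n$ makes $(\mathbb{E}[Y\mid\mathcal{G}^n_t])_n$ a closed martingale converging in $L^2$ to $\mathbb{E}[Y\mid\bigvee_n\mathcal{G}^n_t]$; comparing the two limits forces $Y$ to be $\bigvee_n\mathcal{G}^n_t$-measurable, so $\check{\mathcal{G}}_t\subset\bigvee_n\mathcal{G}^n_t$ up to $\mathbb{P}$-null sets. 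Proposition~\ref{prop-usual-hyp} shows that $\mathcal{G}$ and $\check{\mathcal{G}}$ carry the same continuous semimartingales with the same decompositions (in particular $M=\widetilde{M}+A$ holds in $\check{\mathcal{G}}$ as well, after a harmless modification), so one may run Theorem~\ref{thm-MCPlus} with $\check{\mathcal{G}}$ in place of $\mathcal{G}$, and the inclusion just established is precisely what that needs.

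The step I expect to be the real obstacle is this last inclusion, because $\bigvee_n\mathcal{H}^n_t$ recovers $\mathcal{H}_{t-}$ rather than $\mathcal{H}_t$ in general, so the passage from the discrete to the continuous expansion genuinely relies on the Class~$\mathbb{X}$ hypothesis (which is exactly engineered to absorb this, via Corollary~\ref{cor-doncDiscRC}) and on identifying $\mathcal{G}$ with $\check{\mathcal{G}}$ through Proposition~\ref{prop-usual-hyp}. Everything else — monotonicity, right-continuity, and the identification of $A^n$ with $\int_0^\cdot\alpha^n_s\,d[M,M]_s$ — is immediate from the constructions of this section and from Proposition~\ref{prop-KLP-to-process}, so the proof itself should be short once Theorem~\ref{thm-MCPlus} is invoked.
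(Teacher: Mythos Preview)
Your approach is exactly the paper's: the proposition is stated right after the sentence ``Theorem~\ref{thm-MCPlus} leads to a partial result in the converse direction'' and no further proof is given, so the intended argument is precisely the black-box application of Theorem~\ref{thm-MCPlus} that you outline. Your verification of the non-decreasing, right-continuous structure of $(\mathcal{G}^n)_{n\geq1}$ and the identification of $A^n$ with $\int_0^\cdot\alpha^n_s\,d[M,M]_s$ via Proposition~\ref{prop-KLP-to-process} are all correct and match the paper's implicit reasoning.

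You are in fact more careful than the paper on the inclusion $\mathcal{G}_t\subset\bigvee_n\mathcal{G}^n_t$, which the paper does not check. One small wrinkle in your workaround: running Theorem~\ref{thm-MCPlus} literally with $\check{\mathcal{G}}$ in place of $\mathcal{G}$ is awkward because the proof of that theorem uses filtration shrinkage from the limit filtration down to $\mathcal{G}^n$, and $\mathcal{G}^n_t=\bigcap_{u>t}(\mathcal{F}_u\vee\mathcal{H}^n_u)$ need not sit inside $\check{\mathcal{G}}_t=\mathcal{F}_t\vee\mathcal{H}_t$. The clean fix is to keep $\mathcal{G}$ as the limit filtration (so that $\mathcal{G}^n\subset\mathcal{G}$ holds, and the shrinkage step is valid) and observe that the only place the inclusion hypothesis is used in the proof of Theorem~\ref{thm-MCPlus} is the final line $\mathbb{E}[A_t\mid\mathcal{G}^n_t]\to A_t$; for this it suffices that $A_t$ be $\bigvee_n\mathcal{G}^n_t$-measurable, which follows from your established inclusion $\check{\mathcal{G}}_t\subset\bigvee_n\mathcal{G}^n_t$ together with the fact (Proposition~\ref{prop-usual-hyp}) that $A$ has a $\check{\mathcal{G}}$-adapted modification.
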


Theorem \ref{thm-Tprocess} can actually be applied to obtain
sufficient conditions on the process $X$ ensuring the existence 
of an information drift for the expansion
$\mathcal{G}$ of $\mathcal{F}$ with $X$,
independently of any discretization.
Given a stochastic process $X$ and $t\geq 0$ we will denote
$X^t$ the stopped process given by $X_s^t = X_{s\wedge t}$.
\begin{theorem} 
Let $X$ be a stochastic process satisfying a Jacod type condition 
on the Wiener space
$\mathbb{P}(X^t\in dx|\mathcal{F}_t.) \ll
\mathbb{P}(X^t\in dx)$
and let 
$q_t(., x) = \frac{\mathbb{P}(X^t\in dx|\mathcal{F}_t.)}{\mathbb{P}(X^t\in dx)}$.
$q_t(.,x)$ is a martingale for every $x$ and we can define 
by the Kunita-Watanabe inequality
\bee
\alpha_t = \frac{d[\log q_t(., x), M]_t}{d[M, M]_t}\bigg|_{x = X^t} \ a.s.
\eee
If $\mathbb{E}\int\alpha_s^2d[M,M]_s<\infty$
then $\alpha$ is the information drift of $\mathcal{G}$.
\end{theorem}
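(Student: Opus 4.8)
The plan is to obtain the statement as a corollary of the main Theorem~\ref{thm-Tprocess}. Since $X$ takes its values in the Wiener space it is continuous, hence of Class~$\mathbb{X}^{\pi}$ (Definition~\ref{def-class-x}) for \emph{every} refining sequence of subdivisions $\pi^{n}=(t_i^n)_i$ of $[0,T]$ with mesh tending to $0$; fix one such sequence. First I would propagate the hypothesis to the discretisations: taking $t=T$ in $\mathbb{P}(X^{t}\in dx\mid\mathcal{F}_{t})\ll\mathbb{P}(X^{t}\in dx)$ and using the stated martingale property of $q_t(\cdot,x)$ (so that $\mathbb{P}(X^{T}\in dx\mid\mathcal{F}_{t})\ll\mathbb{P}(X^{T}\in dx)$ for every $t\le T$), one obtains that $X^{T}$ satisfies Jacod's condition with constant reference measure $\eta:=\mathbb{P}(X^{T}\in\cdot)$; pushing this absolute continuity forward under the measurable increment maps, each random vector $\big(X_{0},X_{t_1^n}-X_{t_0^n},\dots,X_{T}-X_{t_{\ell(n)}^n}\big)$ satisfies Jacod's condition, with conditional densities $q^{k,n}$ that are disintegrations of $q$. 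Proposition~\ref{prop-KLP-to-process} then applies and furnishes, for every $n$, a $\mathcal{G}^{n}$-predictable drift $\alpha^{n}$ as in \eqref{eq:alpha-density} with $M=M^{n}+\int_0^{\cdot}\alpha^{n}_{s}\,d[M,M]_{s}$; and Lemma~\ref{lemma-conv-Gn} gives $\mathcal{G}^{n}\xrightarrow[n\to\infty]{L^{2}}\mathcal{G}$.

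The crux is the identity $\alpha^{n}_{t}=\mathbb{E}[\alpha_{t}\mid\mathcal{G}^{n}_{t}]$, $d\mathbb{P}\times d[M,M]$-a.e.\ (equivalently, $\alpha^{n}$ is the $\mathcal{G}^{n}$-predictable projection of $\alpha$). To get it, fix $t\in[t_k^n,t_{k+1}^n)$ and note that the increments of $X$ up to $t_k^n$ are a measurable functional $f_n(X^{t})$ of the path $X^{t}$ (since $t_k^n\le t$). Disintegrating $\mathbb{P}(X^{t}\in dx\mid\mathcal{F}_{t})$ along $f_n$ expresses $q^{k,n}_{t}(\cdot,f_n(X^{t}))$ as a $\mathbb{P}(X^t\in dx\mid f_n(X^t))$–average of $q_{t}(\cdot,x)$; since $x\mapsto q_{\cdot}(\cdot,x)$ is linear in $x$ and the bracket $[\,\cdot\,,M]$ commutes with this (non-random) averaging (Fubini for quadratic covariations), dividing the $d[M,M]$-density of $[q^{k,n}(\cdot,f_n(X^{t})),M]$ by $q^{k,n}_{t}(\cdot,f_n(X^{t}))$ produces exactly the $\big(\mathcal{F}_{t}\vee\sigma(f_n(X^{t}))\big)$-conditional expectation of $\tfrac{d[\log q(\cdot,x),M]_{t}}{d[M,M]_{t}}\big|_{x=X^{t}}=\alpha_{t}$, that is, $\mathbb{E}[\alpha_{t}\mid\mathcal{G}^{n}_{t}]$. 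This is precisely the compensated-integral bookkeeping already carried out in the proof of Theorem~\ref{thm-embedding} (the ``Lemma~10 of~\cite{KLP}'' step). Granting the identity, Jensen's inequality together with the predictability of $[M,M]$ yields $\mathbb{E}\int_0^{T}(\alpha^{n}_{s})^{2}\,d[M,M]_{s}\le\mathbb{E}\int_0^{T}\alpha_{s}^{2}\,d[M,M]_{s}<\infty$, uniformly in $n$, so the second point of Theorem~\ref{thm-Tprocess} (equivalently Theorem~\ref{thm-convID}) applies: $M$ is a continuous $\mathcal{G}$-semimartingale with decomposition $M=\widetilde{M}+\int_0^{\cdot}\alpha^{\infty}_{s}\,d[M,M]_{s}$, where $\alpha^{\infty}\in\mathcal{S}^{2}(\mathcal{G},M)$ is the $L^{2}(d[M,M]\times d\mathbb{P})$-limit of $(\alpha^{n})$.

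It remains to identify $\alpha^{\infty}$ with $\alpha$. Since $X$ is continuous one has $\mathcal{G}^{n}_{t}\uparrow\mathcal{G}_{t}$ (cf.\ Lemma~\ref{lemma-conv-Gn}), so by the martingale convergence theorem $\alpha^{n}_{t}=\mathbb{E}[\alpha_{t}\mid\mathcal{G}^{n}_{t}]\to\mathbb{E}[\alpha_{t}\mid\mathcal{G}_{t}]=\alpha_{t}$ a.s.\ and, the martingale being $L^{2}$-bounded, also in $L^{2}(d[M,M]\times d\mathbb{P})$; hence $\alpha^{\infty}=\alpha$. Therefore $M-\int_0^{\cdot}\alpha_{s}\,d[M,M]_{s}$ is a $\mathcal{G}$-local martingale with $\alpha\in\mathcal{S}^{2}(\mathcal{G},M)$, which by Theorem~\ref{thm-alpha-lm-deflator}(a) identifies $\alpha$ as \emph{the} information drift of $\mathcal{G}$ for $M$ (Definition~\ref{def-id}). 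I expect the projection identity $\alpha^{n}=\mathbb{E}[\alpha\mid\mathcal{G}^{n}]$ --- i.e.\ that the discrete Jacod densities $q^{k,n}$ are coherent disintegrations of the path-space density $q$ and that this coherence survives bracketing against $M$ --- to be the one genuinely delicate step; the remainder is a direct appeal to the convergence results of Section~\ref{sec-cv-fil} plus routine manipulation of Radon-Nikodym densities. One should also check in passing that $\alpha$, defined through the Kunita-Watanabe inequality, admits a $\mathcal{G}$-predictable version, which follows from the standard predictable-projection argument used repeatedly above.
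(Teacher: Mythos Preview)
Your strategy is sound and reaches the same endpoint as the paper---both reduce to Theorem~\ref{thm-Tprocess} after establishing $\alpha^{n}\to\alpha$ in $L^{2}(d\mathbb{P}\times d[M,M])$---but the mechanism for that convergence is genuinely different. The paper does \emph{not} try to prove the projection identity $\alpha^{n}=\mathbb{E}[\alpha\mid\mathcal{G}^{n}]$. Instead it argues that the densities $q_{t}^{k,n}\to q_{t}$ in $L^{2}$ (martingale convergence for the bounded martingales $A\mapsto\int_{A}q_{t}^{k,n}(\cdot,x)\,dx$) and then invokes Theorem~3.3 of Jacod--M\'el\'eard--Protter~\cite{JacMelProt} on the robustness of martingale representations to pass this convergence to the integrands $d[q^{k,n},M]/d[M,M]$, hence to $\alpha^{n}$. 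Your route avoids this external reference entirely: once the projection identity is granted, Jensen supplies the uniform $L^{2}$ bound and closed-martingale convergence identifies the limit with $\alpha$ in one stroke, which is cleaner and more intrinsic to the paper's toolkit.

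The cost is the step you correctly flag as delicate: the projection identity hinges on commuting the quadratic covariation $[\,\cdot\,,M]$ with the disintegration integral $\int(\cdot)\,\eta(dx\mid f_{n}=x_{k})$, a stochastic-Fubini statement that the paper sidesteps via~\cite{JacMelProt}. Note also that you cannot obtain the identity by appealing to Theorem~\ref{thm-embedding} as written, since that theorem \emph{assumes} $M$ has $\mathcal{G}$-decomposition with drift $\alpha$---precisely the conclusion here---so your parenthetical reference should be read only as an analogy with the ``Lemma~10 of~\cite{KLP}'' machinery, not as a logical shortcut. If the Fubini-for-brackets step is made rigorous (standard under the running $L^{2}$ hypothesis on $\alpha$), your argument is complete and arguably more transparent than the paper's.
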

\begin{proof}
That $q(.,x)$ defines a martingale for every $x$ has been well known
\cite[Lemma 1.8]{Jacod-book}, and the Kunita-Watanabe inequality can be found in \cite{PPbook}.
Let $\alpha_t = \frac{d[\log q_t(., x), M]_t}{d[M, M]_t}\bigg|_{x = X^t}$.
Let also $X^n$, $\mathcal{G}^n$ be as in the beginning of Section 4.1.
It is clear that $X^n$ satisfies Jacod condition,
and thus 
by Proposition \ref{prop-KLP-to-process} admits
an information drift $\alpha^n$ with respect to $\mathcal{G}^n$,
as defined in Equation \eqref{eq:alpha-density}.
The conclusion follows from Theorem \ref{thm-Tprocess}
as long as we have 
$\mathbb{E}\int_0^t(\alpha_s^n - \alpha_s)^2 d[M, M]_s
\xrightarrow[n\to\infty]{} 0$.
Let us denote $X_k:=(X_{t_0},X_{t_1}\!-\!X_{t_0},\cdots,X_{t_k}\!-\!X_{t_{k-1}})$. Since we have
\begin{align*}
&\mathbb{E}\int_0^t(\alpha_s^n - \alpha_s)^2 ds \\
=  
&\sum_{k = 1}^{\ell(n)} \mathbb{E}\int_{t_k^n}^{t_{k+1}^n} 
\Bigg(
\frac{d[\log q_t^{k, n}(., x_k), M]_t}{d[M, M]_t}
\bigg|_{x_k=X_k} 
-  \frac{d[\log q_t(., x), M]_t}{d[M, M]_t}^2\bigg|_{x=X^t} 
\Bigg)^2 ds,
\end{align*}
 we only need to show that 
\bee
\frac{d[\log q_t^{k, n}(., x_k), M]_t}{d[M, M]_t}
\bigg|_{x_k=X_k} 
1_{t_k\leq t < t_{k+1}}
\xrightarrow[n\to\infty]{L^2(M)}  
\frac{d[\log q_t(., x), M]_t}{d[M, M]_t}\bigg|_{x = X^t}
\label{eq-conv}
\tag{$*$}
\eee
uniformly with respect to $k$.\\ \\
For every measurable set $A$, the convergence 
$\int_A q_t^{k, n}(., x) dx \xrightarrow[n\to\infty]{a.s.,L^2}
\int_A q_t(x) dx$
is ensured by the martingale convergence theorem for
bounded martingales, each $n$.
In particular we have the pointwise convergence 
$q_t^{k, n}\xrightarrow[n\to\infty]{L^2} q_t$.
This implies, by Theorem 3.3 of \cite{JacMelProt}
that the $t-$martingales representations 
of $q_t^{k, n}$ 
converges to the martingale representation of $q_t$
in $L^2(d\mathbb{P}\times d[M, M]_t)$.
\end{proof}

\subsection{Description of class $\mathbb{X}$} \label{sub-class-x}

In this paragraph we derive several sufficient conditions for $X$ to be of class $\mathbb{X}$. A particular subset of processes of class $\mathbb{X}$ is easy to characterize.

\subsubsection{Class $\mathbb{X}_0$}

\begin{definition}
We say that $X$ is of Class $\mathbb{X}_0$ if $\mathcal{H}_{t^-}=\bigvee_{n\geq 1}\mathcal{H}_{t^-}^n$.
\end{definition}

\begin{proposition} If $X$ is of Class $\mathbb{X}_0$, it is also of Class $\mathbb{X}$
\end{proposition}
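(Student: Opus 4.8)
The plan is to reduce the statement to Proposition~\ref{prop-VHn-providentiel-martingale}, which provides $L^2$-convergence of any non-decreasing sequence of $\sigma$-algebras to its supremum. So the only thing to verify is that, for each fixed $t\in I$, the sequence $(\mathcal{H}_{t^-}^n)_{n\geq1}$ is non-decreasing, after which the result is immediate.

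First I would observe that this monotonicity follows from the fact that $(\pi^n)_{n\geq1}$ is a \emph{refining} sequence of subdivisions. Indeed, for any $s$ the $\sigma$-algebra $\mathcal{H}_s^n=\sigma(X_r^n,\,r\leq s)$ is generated by the sample values $\{X_{t_i^n}:t_i^n\leq s\}$, and since $\pi^{n+1}\supseteq\pi^n$ we have $\{t_i^n:t_i^n\leq s\}\subseteq\{t_j^{n+1}:t_j^{n+1}\leq s\}$; hence $\mathcal{H}_s^n\subseteq\mathcal{H}_s^{n+1}$. Taking the supremum over $s<t$ gives $\mathcal{H}_{t^-}^n\subseteq\mathcal{H}_{t^-}^{n+1}$, so $(\mathcal{H}_{t^-}^n)_{n\geq1}$ is non-decreasing for each fixed $t$, with supremum $\bigvee_{n\geq1}\mathcal{H}_{t^-}^n$.

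Then I would apply Proposition~\ref{prop-VHn-providentiel-martingale} with $p=2$ to the non-decreasing sequence $(\mathcal{H}_{t^-}^n)_{n\geq1}$, obtaining $\mathcal{H}_{t^-}^n\xrightarrow[n\to\infty]{L^2}\bigvee_{n\geq1}\mathcal{H}_{t^-}^n$. By the definition of Class $\mathbb{X}_0$ the right-hand side equals $\mathcal{H}_{t^-}$, so $\mathcal{H}_{t^-}^n\xrightarrow[n\to\infty]{L^2}\mathcal{H}_{t^-}$; since $t\in I$ was arbitrary, this is exactly the defining condition of Class $\mathbb{X}$ in Definition~\ref{def-class-x}, and the proof is complete. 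There is no genuine obstacle here: the argument is a direct invocation of an already-established proposition, the one mild point being the verification of monotonicity of $(\mathcal{H}_{t^-}^n)_n$, which is where the hypothesis that $\pi$ is a refining scheme is used.
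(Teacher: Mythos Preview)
Your proof is correct and follows essentially the same approach as the paper, which simply states that the result is a direct corollary of Proposition~\ref{prop-VHn-providentiel-martingale}. Your explicit verification of the monotonicity of $(\mathcal{H}_{t^-}^n)_{n\geq1}$ from the refining property of $(\pi^n)_{n\geq1}$ makes rigorous a hypothesis that the paper leaves implicit when invoking that proposition.
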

\begin{proof} This is a direct corollary of Proposition \ref{prop-VHn-providentiel-martingale}, which shows that if $\mathcal{H}_{t}=\bigvee_{n\geq 1}\mathcal{H}_{t}^n$, then for every $n\geq 1$ $\mathcal{H}_{t}^n\xrightarrow[n\geq 1]{L^p}\mathcal{H}_{t}$. Note that we need not assume that $\pi^n$ contains all fixed times of discontinuity of $X$ as is required in Lemma 5 and Example 1 of \cite{KP}.
\end{proof}

\begin{proposition} The process $X$ is of Class $\mathbb{X}_0$ if its natural filtration $\mathcal{H}$ is quasi left-continuous.
\end{proposition}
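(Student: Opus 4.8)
The plan is to compute each of the two $\sigma$-algebras explicitly and then match them using right-continuity of the paths of $X$ together with the density of the refining subdivisions; no deep tool is needed. First I would fix $t\in I$ and put $D:=\bigcup_{n\ge1}\pi^n$, which is a countable dense subset of $[0,T]$ since the $\pi^n$ refine one another and their mesh tends to $0$. Because $X^n$ is the right-continuous step process equal to $X_{t_i^n}$ on $[t_i^n,t_{i+1}^n)$, for every $s<t$ one has $\mathcal{H}^n_s=\sigma(X_{t_i^n}:t_i^n\le s)$, hence $\mathcal{H}^n_{t^-}=\bigvee_{s<t}\mathcal{H}^n_s=\sigma(X_{t_i^n}:t_i^n<t)$; these $\sigma$-algebras increase with $n$ by refinement, so $\bigvee_{n\ge1}\mathcal{H}^n_{t^-}=\sigma(X_s:s\in D,\ s<t)$.

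The inclusion $\bigvee_n\mathcal{H}^n_{t^-}\subseteq\mathcal{H}_{t^-}$ is immediate, each $X_s$ with $s<t$ being $\mathcal{H}_{t^-}$-measurable. For the converse I would use $\mathcal{H}_{t^-}=\sigma(X_u:u<t)$ and show each such $X_u$ is $\sigma(X_s:s\in D,\ s<t)$-measurable: when $u\in D$ this is clear, and otherwise, $D$ being dense, I would pick $v_k\in D\cap(u,t)$ with $v_k\downarrow u$ (legitimate, since $v_k<t$ once $v_k$ is close enough to $u$) and invoke right-continuity of $X$ to get $X_u=\lim_k X_{v_k}$. This yields $\mathcal{H}_{t^-}=\bigvee_n\mathcal{H}^n_{t^-}$ for every $t\in I$, i.e. $X$ is of class $\mathbb{X}_0$, after which Proposition~\ref{prop-VHn-providentiel-martingale} (monotone convergence of $\sigma$-algebras in $L^2$) gives class $\mathbb{X}$ and Lemma~\ref{lemma-conv-Gn} transfers the convergence to $\mathcal{G}^n$.

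The quasi-left-continuity hypothesis enters only to align this with the way $\check{\mathcal{G}}$ is built: a deterministic time $t$ is predictable, so quasi-left-continuity of $\mathcal{H}$ gives $\mathcal{H}_t=\mathcal{H}_{t^-}$, which is the form ($\check{\mathcal{G}}_t=\mathcal{F}_t\vee\mathcal{H}_t$) one wants at the limit. I do not expect a genuine obstacle: the only care required is the bookkeeping of $\mathcal{H}^n_{t^-}$ and of its union over $n$, plus the observation that for each $u<t$ the interval $(u,t)$ meets $\bigcup_n\pi^n$ — both consequences of the mesh tending to $0$. It is worth recording that the density argument for the reverse inclusion does not itself use the hypothesis, so the same computation equally underlies the other sufficient conditions for class $\mathbb{X}$ listed above.
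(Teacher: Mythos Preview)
Your argument is correct and in fact sharper than the paper's. The paper does not compute $\bigvee_n \mathcal{H}_{t^-}^n$ directly; instead it records the sandwich
\[
\mathcal{H}_{t^-}\subset\bigvee_{n\geq1}\mathcal{H}_t^n\subset\mathcal{H}_{t}
\]
(with $\mathcal{H}_t^n$ rather than $\mathcal{H}_{t^-}^n$ in the middle term) and then invokes quasi-left-continuity at the deterministic, hence predictable, time $t$ to collapse $\mathcal{H}_{t^-}=\mathcal{H}_t$, forcing equality throughout. Your route, by contrast, establishes $\bigvee_n\mathcal{H}_{t^-}^n=\mathcal{H}_{t^-}$ directly from right-continuity of $X$ and density of $\bigcup_n\pi^n$, without any hypothesis on $\mathcal{H}$; you are correct that this makes every c{\`a}dl{\`a}g $X$ of Class~$\mathbb{X}_0$ as literally defined, and that quasi-left-continuity is only genuinely needed to pass from $\mathcal{H}_{t^-}$ to $\mathcal{H}_t$ downstream (where $\check{\mathcal{G}}_t=\mathcal{F}_t\vee\mathcal{H}_t$ appears). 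The two approaches share the same underlying density idea --- the paper simply asserts its left inclusion without spelling out the right-continuity argument you give --- but your version is more explicit and buys the additional observation that the hypothesis is superfluous for the equality stated in the definition of Class~$\mathbb{X}_0$ itself.
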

\begin{proof}
In general we have the inclusion
$$\mathcal{H}_{t^-}\subset\bigvee_{n\geq1}\mathcal{H}_t^n\subset\mathcal{H}_{t}.$$ It follows that $X$ is of Class $\mathbb{X}_0$ if $\mathcal{H}$ is left continuous. More generally the second inclusion is tight if $\mathcal{H}$ is quasi-left continuous. This is for instance the case if $X$ is a L{\'e}vy, Feller or more generally a c{\`a}dl{\`a}g Hunt Markov process, or any integral of such a process.
\end{proof}

\begin{corollary} Hunt processes (which include L{\'e}vy processes) are of class $\mathbb{X}_0$.
\end{corollary}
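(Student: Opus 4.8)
The plan is to reduce the statement to the preceding proposition by recalling that the natural filtration of a Hunt process is quasi left-continuous. First I would fix the setup: a Hunt process is, by definition, a (normal) strong Markov process with c{\`a}dl{\`a}g paths that is quasi left-continuous up to its lifetime $\zeta$, meaning that for every increasing sequence of stopping times $T_n \uparrow T$ one has $X_{T_n} \to X_T$ almost surely on $\{T < \zeta\}$. On the bounded horizon $I = [0,T]$ we may, if needed, adjoin a cemetery point to $E$ so as to assume $\zeta = \infty$, which changes none of the relevant $\sigma$-algebras. A classical result in the general theory of Markov processes (Blumenthal--Getoor, Sharpe) then upgrades this pathwise property to quasi left-continuity of the completed right-continuous natural filtration $\mathcal{H}$: one has $\mathcal{H}_{T^-} = \mathcal{H}_T$ for every $\mathcal{H}$-predictable stopping time $T$, and in particular $\mathcal{H}_{t^-} = \mathcal{H}_t$ for every deterministic $t \in I$.

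The second step is then immediate: with $\mathcal{H}$ quasi left-continuous, the preceding proposition applies verbatim and yields that $X$ is of Class $\mathbb{X}_0$ (and hence, by the proposition before it, also of Class $\mathbb{X}$). For the parenthetical remark on L\'evy processes, I would note that a L\'evy process is a Feller process, hence a Hunt process, so the general case already covers it; alternatively one argues directly that stochastic continuity of a L\'evy process excludes fixed times of discontinuity, which together with the independence of increments and the Markov property forces $\mathcal{H}_{t^-} = \mathcal{H}_t$ for all $t$ --- exactly the defining property of Class $\mathbb{X}_0$.

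The only delicate point, and the closest thing to an obstacle, is the passage from the \emph{pathwise} quasi left-continuity that defines a Hunt process to quasi left-continuity of the \emph{filtration} $\mathcal{H}$ required by the preceding proposition. This is a standard but genuinely nontrivial theorem of classical Markov process theory, which I would cite rather than reprove; everything else here is a direct invocation of results already established in this section.
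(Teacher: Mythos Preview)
Your proposal is correct and follows essentially the same route as the paper: both reduce to the preceding proposition by invoking the classical fact that the natural filtration of a Hunt process is quasi left-continuous, which the paper establishes by a one-line citation to \cite[p.~36]{PPbook} while you spell out the definition and cite Blumenthal--Getoor/Sharpe for the passage from pathwise to filtration quasi left-continuity. Your treatment is more explicit about the one nontrivial ingredient (and about why L\'evy processes qualify), but the underlying argument is the same.
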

\begin{proof}
See for instance \cite[p. 36]{PPbook}. Note that if $X$ is a Hunt process, the $\mathbb{P}$-completion of $\mathcal{H}$ is also right-continuous, but we cannot in general conclude that $\check{\mathcal{G}}$ is also right-continuous. 
\end{proof}

\subsubsection{Class $\mathbb{X}$}

This paragraph shows that class $\mathbb{X}$ contains c{\`a}dl{\`a}g processes without fixed points of discontinuity (in particular continuous processes) and Hunt processes. We start with a characterization of the convergence of filtrations in $L^p$ with finite dimensional distributions for filtrations generated by c{\`a}dl{\`a}g process.

\begin{proposition}\label{prop-cv-fun-to-filtr}
Let $\mby:=\sigma(Y_t,0\leq t\leq T)$ for some c{\`a}dl{\`a}g measurable process $Y$ and $p\geq 1$. \\ 
For a given sequence $\mby^n,n\geq1$ of $\sigma$-algebras, $\mby^n\xrightarrow[n\to\infty]{L^p}\mby$ if and only if for all $k\in\mathbb{N}$,$u_1,...,u_k\in[0,T]$ and $f$ bounded continuous function we have
$\mathbb{E}[f(Y_{u_1},...,Y_{u_k})|\mby^n]\xrightarrow[n\to\infty]{L^p}f(Y_{u_1},...,Y _{u_k})$.
\end{proposition}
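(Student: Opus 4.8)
The plan is to prove the two implications separately, the forward direction being essentially trivial and the converse requiring a density/approximation argument. For the forward implication, suppose $\mby^n\xrightarrow[n\to\infty]{L^p}\mby$. Since $Y$ is c\`adl\`ag and measurable, each random variable $f(Y_{u_1},\dots,Y_{u_k})$ with $f$ bounded continuous is $\mby$-measurable and bounded, hence lies in $L^p(\mby,\mathbb{P})$; so the conclusion is just the special case of condition (ii) in the definition of convergence of $\sigma$-algebras applied to this particular random variable. Nothing more is needed here.

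For the converse, assume the stated convergence holds for all finite-dimensional ``cylinder'' functions $f(Y_{u_1},\dots,Y_{u_k})$ with $f$ bounded continuous. The goal is to upgrade this to $\mathbb{E}[Y'|\mby^n]\xrightarrow{L^p}Y'$ for every $Y'\in L^p(\mby,\mathbb{P})$. First I would let $\mathcal{C}$ denote the collection of bounded random variables $Y'$ for which the convergence $\mathbb{E}[Y'|\mby^n]\xrightarrow{L^p}Y'$ holds. The crucial observation is that $\mathcal{C}$ is closed under uniform limits (by the contraction property $\|\mathbb{E}[\cdot|\mby^n]\|_{L^\infty}\le\|\cdot\|_{L^\infty}$ together with the $L^p\le L^\infty$ bound on a probability space) and is a vector space containing the constants. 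By Stone--Weierstrass, every bounded continuous function of $(Y_{u_1},\dots,Y_{u_k})$ on $\mathbb{R}^k$ is a uniform limit of polynomials in the coordinates, but more to the point the bounded continuous cylinder functions themselves generate, under uniform closure and the monotone class / functional monotone class theorem, all bounded $\sigma(Y_{u_1},\dots,Y_{u_k})$-measurable functions. Letting the finite index sets $\{u_1,\dots,u_k\}$ range over all finite subsets of a countable dense set $D\subset[0,T]$ containing $T$, and using right-continuity of $Y$ to recover $Y_t$ for every $t\in[0,T]$ as an a.s. limit $Y_t=\lim_{D\ni u\downarrow t}Y_u$, one gets that the $\sigma$-algebra generated by all such cylinder functions is exactly $\mby=\sigma(Y_t,0\le t\le T)$. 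A functional monotone class argument then shows $\mathcal{C}$ contains every bounded $\mby$-measurable random variable, and a final truncation step (approximating $Y'\in L^p(\mby)$ by $Y'\wedge N\vee(-N)$ and using that conditional expectation is an $L^p$-contraction) extends the convergence to all of $L^p(\mby,\mathbb{P})$.

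The step I expect to be the main obstacle — or at least the one requiring the most care — is the reduction from bounded continuous cylinder functions to \emph{all} bounded $\mby$-measurable functions while controlling everything in $L^p$ rather than merely in probability. One must check that the monotone class theorem applies in the $L^p$-convergence setting: the subtlety is that monotone pointwise limits of the functions in $\mathcal{C}$ need not automatically stay in $\mathcal{C}$ unless one invokes dominated convergence to pass from a.s.\ (or monotone) convergence of the limiting random variables to $L^p$ convergence, and simultaneously uses the conditional-expectation contraction to handle the $\mathbb{E}[\cdot|\mby^n]$ terms uniformly in $n$. Concretely, if $Y'_m\uparrow Y'$ boundedly with each $Y'_m\in\mathcal{C}$, then $\|\mathbb{E}[Y'|\mby^n]-Y'\|_{L^p}\le\|\mathbb{E}[Y'-Y'_m|\mby^n]\|_{L^p}+\|\mathbb{E}[Y'_m|\mby^n]-Y'_m\|_{L^p}+\|Y'_m-Y'\|_{L^p}$, and the first and third terms are made small uniformly in $n$ by dominated convergence before sending $n\to\infty$ in the middle term. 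This is the same strategy used for the elementary properties of $L^p$-convergence of $\sigma$-algebras earlier in Section~\ref{sec-cv-fil}, and indeed the whole proposition is the natural c\`adl\`ag-process analogue of Proposition~\ref{prop-propConv}; the c\`adl\`ag path regularity is what lets us get away with only countably many times and only continuous test functions.
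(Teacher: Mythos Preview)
Your plan is correct and is exactly the standard functional monotone class argument; the paper itself gives no self-contained proof but simply refers to Lemma~3 of \cite{KP}, noting that the argument there (stated for weak convergence) carries over verbatim to $L^p$ convergence. Two minor simplifications: the Stone--Weierstrass aside is unnecessary (you rightly abandon it), and the detour through a countable dense set $D$ and the c\`adl\`ag property is superfluous---the hypothesis already grants convergence for \emph{all} $u_1,\dots,u_k\in[0,T]$, so the multiplicative class of bounded continuous cylinder functions with arbitrary time indices generates $\mby$ directly, and no path regularity of $Y$ is actually used in the proof.
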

\begin{proof} The proof is identical to the characterization of weak convergence in Lemma 3 of \cite{KP}.
\end{proof}

\begin{proposition}\label{prop-cv-fdd-to-filtr} Let $Y,Y^n,n\geq 1$ be c{\`a}dl{\`a}g stochastic processes and $\mby,\mby^n$ their respective natural filtrations.
Suppose that
\begin{equation*}
\forall k\in\mathbb{N},0\leq u_1\leq ... \leq u_k, (Y_{u_1}^n,Y_{u_2}^n,...,Y_{u_k}^n)\xrightarrow[n\to\infty]{d}(Y_{u_1},Y_{u_2},...,Y_{u_k})
\end{equation*}
Then for every $p\geq 1$ we have $\mby^n\xrightarrow[n\to\infty]{L^p}{\mby}$.
\end{proposition}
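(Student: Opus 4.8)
<br>The plan is to reduce, via the finite-dimensional criterion of Proposition \ref{prop-cv-fun-to-filtr} applied to the process $Y$ generating $\mby$, the claim $\mby^n\xrightarrow{L^p}\mby$ to the single statement that for every $k\in\mathbb{N}$, all times $u_1\leq\cdots\leq u_k\leq T$, and every bounded continuous $f:E^k\to\mathbb{R}$,
\[
\mathbb{E}\big[f(Y_{u_1},\ldots,Y_{u_k})\,\big|\,\mby^n\big]\xrightarrow[n\to\infty]{L^p}f(Y_{u_1},\ldots,Y_{u_k}).
\]
To handle this conditional expectation I would introduce the $\mby^n$-measurable approximant built from the same $f$. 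Writing $F:=f(Y_{u_1},\ldots,Y_{u_k})$ and $F^n:=f(Y^n_{u_1},\ldots,Y^n_{u_k})$, and using $\mathbb{E}[F^n\mid\mby^n]=F^n$, one obtains the decomposition $\mathbb{E}[F\mid\mby^n]-F=\mathbb{E}[F-F^n\mid\mby^n]+(F^n-F)$. Conditional Jensen bounds the first term in $L^p$ by $\lVert F-F^n\rVert_{L^p}$, so the whole estimate collapses to
\[
\big\lVert f(Y^n_{u_1},\ldots,Y^n_{u_k})-f(Y_{u_1},\ldots,Y_{u_k})\big\rVert_{L^p}\xrightarrow[n\to\infty]{}0,
\]
and since $f$ is bounded this follows, by dominated convergence, once $F^n\to F$ in probability; by continuity of $f$ the latter holds as soon as $(Y^n_{u_1},\ldots,Y^n_{u_k})\to(Y_{u_1},\ldots,Y_{u_k})$ in probability on $(\Omega,\mathcal{A},\mathbb{P})$.

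The hard part is exactly this last reduction, and it is where the hypothesis must be handled with care. The displayed assumption is convergence \emph{in distribution} of the marginals, $(Y^n_{u_1},\ldots,Y^n_{u_k})\xrightarrow{d}(Y_{u_1},\ldots,Y_{u_k})$, which constrains only marginal laws and does not determine the joint law of $(Y^n,Y)$ on the common space; by itself it cannot force $F^n\to F$ there, as one sees by taking $Y^n$ equal in law to $Y$ but independent of it, which makes every $\mathbb{E}[F\mid\mby^n]$ a constant and breaks the conclusion. The genuine input required at this step is therefore the strictly stronger convergence $Y^n_{u_j}\to Y_{u_j}$ \emph{in probability} on $(\Omega,\mathcal{A},\mathbb{P})$, and any correct proof must supply it rather than read it off from convergence in distribution.

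In the setting where the proposition is applied this stronger convergence is available for free, which is how I would finish the argument: the approximating processes $Y^n$ are the c\`adl\`ag step-discretizations $Y^n_t=Y_{t^n_i}$ of the single process $Y=X$ along a refining mesh $\pi^n$, so right-continuity of the paths of $Y$ yields $Y^n_u\to Y_u$ almost surely at every time $u$ that is a.s.\ a continuity point of $Y$, and the finite-dimensional ``convergence in distribution'' asserted in the hypothesis is really this pathwise convergence in disguise. With $Y^n_{u_j}\to Y_{u_j}$ in probability in hand, boundedness and continuity of $f$ close the chain through the decomposition above, and Proposition \ref{prop-cv-fun-to-filtr} then delivers $\mby^n\xrightarrow{L^p}\mby$ for every $p\geq 1$.
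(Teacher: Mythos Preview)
Your argument follows the paper's proof exactly: reduce via Proposition~\ref{prop-cv-fun-to-filtr} to showing $\mathbb{E}[f(Y_{u_1},\ldots,Y_{u_k})\mid\mby_t^n]\to f(Y_{u_1},\ldots,Y_{u_k})$ in $L^p$, insert the $\mby^n_t$-measurable quantity $f(Y^n_{u_1},\ldots,Y^n_{u_k})$, bound by two copies of $\lVert f(Y^n_{\cdot})-f(Y_{\cdot})\rVert_{L^p}$, and then close using boundedness and continuity of $f$. The paper does precisely this, invoking Minkowski for the decomposition.

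Your critical observation is also correct, and it applies to the paper's own proof. The paper writes that $f(Y^n_{u_1},\ldots,Y^n_{u_k})\to f(Y_{u_1},\ldots,Y_{u_k})$ in $L^p$ ``as long as $(Y^n_{u_1},\ldots,Y^n_{u_k})\xrightarrow{d}(Y_{u_1},\ldots,Y_{u_k})$''. As you note with the independent-copies counterexample, this implication is false: convergence in distribution says nothing about the joint law of $(Y^n,Y)$ on the common probability space, and $L^p$ convergence of $f(Y^n_{\cdot})-f(Y_{\cdot})$ genuinely requires convergence in probability of the finite-dimensional vectors. So the proposition as stated is not provable, and the paper's proof contains the same gap you identify.

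Your repair is also the right one, and it matches how the proposition is actually used. In Proposition~\ref{prop-cont-to-class-d} the paper applies this result only with $Y^n$ the step-discretization $X^n$ of a single c\`adl\`ag process $X$, where $X^n_u\to X_u$ almost surely at every $u$ without a fixed discontinuity; the paper even remarks there that ``with our discretization setting this is a consequence of the convergence $X^n\xrightarrow{a.s.}X$''. In other words, the paper silently uses the stronger pointwise convergence in its application, exactly as you propose, but the general statement of Proposition~\ref{prop-cv-fdd-to-filtr} with a bare convergence-in-distribution hypothesis does not stand.
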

\begin{proof}
Fix $p\geq 1,0\leq u_1\leq ... \leq u_k\leq t$ and let $f$ a bounded continuous function. By the Minkowski inequality (and because $f(Y_{u_1}^n,Y_{u_2}^n,...,Y_{u_k}^n)$ is $\mby_t^n$-measurable) we have
\begin{align*}
&\Big| \mathbb{E}[f(Y_{u_1},Y_{u_2},...,X_{u_k})|\mby_t^n]
- f(Y_{u_1},Y_{u_2},...,Y_{u_k}) \Big|^p \\
&\leq 
\Big| \mathbb{E}[f(Y_{u_1},Y_{u_2},...,Y_{u_k})
- f(Y_{u_1}^n,Y_{u_2}^n,...,Y_{u_k}^n)|\mby_t^n] \Big|^p\\
&+ \Big| f(Y_{u_1}^n,Y_{u_2}^n,...,Y_{u_k}^n)
- f(Y_{u_1},Y_{u_2},...,Y_{u_k}) \Big|^p 
\end{align*}
and consequently
\begin{align*}
&\mathbb{E}\Big| \mathbb{E}[f(Y_{u_1},Y_{u_2},...,Y_{u_k})|\mby_t^n]
- \mathbb{E}[f(Y_{u_1},Y_{u_2},...,Y_{u_k})|{\mby}_t]\Big| \\
&\leq 2\mathbb{E}[\Big| f(Y_{u_1},Y_{u_2},...,Y_{u_k})
- f(Y_{u_1}^n,Y_{u_2}^n,...,Y_{u_k}^n)\Big|].
\end{align*}
Since $f$ is bounded, for every $p\geq 1$, $f(Y_{u_1},Y_{u_2},...,Y_{u_k}) \xrightarrow[n\to\infty]{L^p} f(Y_{u_1}^n,Y_{u_2}^n,...,Y_{u_k}^n)$ as long as $(Y_{u_1}^n,Y_{u_2}^n,...,Y_{u_k}^n)\xrightarrow[n\to\infty]{d}(Y_{u_1},Y_{u_2},...,Y_{u_k})$
so we can conclude from Proposition \ref{prop-cv-fun-to-filtr}.
\end{proof}

\noindent 
Using this characterization, we show that convergence in probability of the underlying processes is sufficient for convergence of filtrations, under a technical assumption on the jumps.

\bigskip

\begin{proposition} \label{prop-cont-to-class-d} If $X$ is a c{\`a}dl{\`a}g process with $\Delta X_t=0$ a.s. for every $t\in I$, then $X$ is of class $\mathbb{X}$.
\end{proposition}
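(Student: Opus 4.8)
The plan is to verify the defining property of class $\mathbb{X}^\pi$ head-on: fix $t\in I$ and show $\mathcal{H}_{t^-}^n\xrightarrow[n\to\infty]{L^2}\mathcal{H}_{t^-}$. Since $\mathcal{H}_{t^-}=\sigma(X_s,\,s<t)$ is generated by the countable family $\{X_q:q\in[0,t)\cap\mathbb{Q}\}$ (right-continuity of $X$ recovers $X_s$ for the remaining $s<t$), the characterization underlying Proposition \ref{prop-cv-fun-to-filtr} reduces the claim to
$$\mathbb{E}\big[f(X_{u_1},\dots,X_{u_k})\,\big|\,\mathcal{H}_{t^-}^n\big]\xrightarrow[n\to\infty]{L^2}f(X_{u_1},\dots,X_{u_k})$$
for all $k\geq1$, all $0\leq u_1<\dots<u_k<t$ and all bounded continuous $f$; passing from these cylinder functions to an arbitrary element of $L^2(\mathcal{H}_{t^-})$ is the same $L^2$-density argument as in that proposition (the set of good $Y$ is a closed subspace because conditional expectation is an $L^2$-contraction, and the cylinder functions form a multiplicatively stable generating family).

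For the core estimate, fix $u_1<\dots<u_k<t$ and $f$ bounded continuous, and for each $n$ set $s_j^n:=\max\{s\in\pi^n:s\leq u_j\}$. Because $(\pi^n)$ is refining with mesh tending to $0$ we have $s_j^n\uparrow u_j$, and since $s_j^n\leq u_j<t$ the variable $X_{s_j^n}=X^n_{u_j}$ is $\mathcal{H}_{u_j}^n\subset\mathcal{H}_{t^-}^n$-measurable, hence so is $f(X_{s_1^n},\dots,X_{s_k^n})$. Splitting $f(X_{u_1},\dots,X_{u_k})$ as $\big(f(X_{u_1},\dots)-f(X_{s_1^n},\dots)\big)+f(X_{s_1^n},\dots)$ and using once more that conditional expectation is an $L^2$-contraction,
$$\big\|\mathbb{E}[f(X_{u_1},\dots,X_{u_k})\,|\,\mathcal{H}_{t^-}^n]-f(X_{u_1},\dots,X_{u_k})\big\|_{L^2}\ \leq\ 2\,\big\|f(X_{s_1^n},\dots,X_{s_k^n})-f(X_{u_1},\dots,X_{u_k})\big\|_{L^2}.$$
The hypothesis $\Delta X_{u_j}=0$ a.s. at the deterministic time $u_j$, combined with the existence of càdlàg left limits, gives $X_{s_j^n}\to X_{u_j^-}=X_{u_j}$ a.s.; continuity of $f$ then yields a.s. convergence of the right-hand integrand, and boundedness of $f$ upgrades it to $L^2$ convergence by dominated convergence. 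This establishes the displayed limit and hence $X\in\mathbb{X}^\pi$.

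The argument is really just a localized, left-limit version of the proof of Proposition \ref{prop-cv-fdd-to-filtr}, and the only point that needs care is the bookkeeping near the endpoint $t$: one must approach each fixed sampling time $u_j<t$ by grid points that stay $<t$ (automatic since $s_j^n\leq u_j<t$) and invoke the absence of a fixed jump of $X$ precisely at those times $u_j$ to identify the left limit with $X_{u_j}$. Everything else — the density reduction and the dominated-convergence step — is soft, and the identical computation in fact gives $\mathcal{H}_{t^-}^n\to\mathcal{H}_{t^-}$ in $L^p$ for every $p\geq1$.
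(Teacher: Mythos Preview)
Your proof is correct and follows essentially the same route as the paper: both arguments reduce to the pointwise a.s.\ convergence $X^n_{u}\to X_u$ (via $s_j^n\uparrow u_j$ and the no-fixed-jump hypothesis $X_{u^-}=X_u$), and then lift this to $L^2$-convergence of the filtrations through cylinder functions, exactly as in the proof of Proposition~\ref{prop-cv-fdd-to-filtr}. Your write-up is in fact more careful than the paper's, which invokes Proposition~\ref{prop-cv-fdd-to-filtr} rather tersely without spelling out the $\mathcal{H}_{t^-}$ versus $\mathcal{H}_t$ bookkeeping that you handle explicitly.
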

\begin{proof}  The hypothesis of Proposition \ref{prop-cv-fdd-to-filtr} is satisfied as long as we have $\forall t\in I$, $Y_t^n\xrightarrow[n\to\infty]{\mathbb{P}}Y_t$ and $\mathbb{P}(\Delta Y_t\neq 0)$. With our discretization setting this is a consequence of the convergence $X^n\xrightarrow[n\to\infty]{a.s.}X$ (which actually holds if $\pi^n$ contain the fixed points of discontinuity of $X$ after some fixed rank).
\end{proof}

\begin{remark} The proof of the last proposition can be put in perspective with Lemmas 3 and 4 of \cite{KP} which derive sufficient conditions for weak convergence of the filtrations generated by c{\`a}dl{\` a}g processes, where the underlying processes are required to converge in probability for the Skorohod $J^1$ topology. In comparison we only ask for pointwise convergence.
\end{remark}

\subsection{Examples}

\paragraph{It{\^o}'s initial example \cite{Ito}.}
When $\mathcal{F}$ is the filtration generated by a Brownian motion $W$ and $\mathcal{G}$ its initial expansion with $W_1$, it is classical that there exists an information drift on $[0,1)$ given by $\frac{W_1 - W_s}{1-s}$,
 which can be extended to time $1$. 
 However $\mathbb{E}[\alpha_s^2] = \frac{\mathbb{E}[(W_1 - W_s)^2}{(1-s)^2} = \frac{1}{1-s}$ 
 with $\int_0^t\mathbb{E}[\alpha_s^2]  ds = 1 - log(1-t) \xrightarrow[t\to 1]{} \infty$
This implies that although the information drift exists in $\mathscr{H}^1$ we cannot define 
$\alpha\in\mathscr{H}^2$ on $[0,1]$.

\paragraph{A suggestive example.} 
Let $\mathcal{F}$ continue to be the filtration 
generated by a Brownian motion $W$ and
let us take here $X_t := W_1 + \epsilon\widetilde{W}_{1-s}^H$ where $\widetilde{W}^H$ is a fractional 
Brownian motion with Hurst parameter $H$ independent from $W$. Variations of this setup have 
been studied in several articles including \cite{KP}, \cite{Corcuera}, \cite{Ank-thesis} and \cite{LNthesis}.
It is easy to show using Gaussian calculations (e.g. \cite{KP}) or more general representations (\cite{Corcuera})
that $W$ admits an information drift for the filtration $\mathcal{G}_t:=\mathcal{F}_t\vee\sigma(X_u, u\leq t)$ 
which is given by
$$\alpha_t = \frac{W_1-W_s + \epsilon \widetilde{W}_{1-s}^H}{1-s + \epsilon^2(1-s)^{2H}}.$$
This process converges in $\mathscr{H}^1$ for every $H$, but
$\mathbb{E}\alpha_s^2 = \frac{1}{1-s + \epsilon^2(1-s)^{2H}}$ so that 
$$\mathbb{E}\int_0^1\alpha_s^2ds < \infty \iff H<\frac{1}{2}$$
which corresponds to a fractional noise process with negatively correlated increments.

In the perspective of our search for a good model, we can observe that perturbing the anticipative signal with an independent noise doesn't change the nature of the semimartingale property, but changes the information drift and its integrability (see also \cite{LNthesis}).

\paragraph{The Bessel-3 process.}
Let us now consider the classical counterexample of the Bessel-3 process: let $Z$ be a Bessel-3 process, $\mathcal{F}$ 
its natural filtration, $X_t:=\inf_{t>s}Z_s$, and $\mathcal{G}$ the expansion of $\mathcal{F}$ with $X$. 
It is classical that $W_t:=Z_t-\int_0^t\frac{ds}{Z_s}$ is a $\mathcal{F}$- Brownian motion.
The formula of Pitman \cite{Pitman} shows that $W$ is a $\mathcal{G}$-semimartingale with decomposition
$W_t-(2X_t-\int_0^t\frac{ds}{Z_s})$ defines a $\mathcal{G}$-Brownian motion, which implies
that $W$ is a $\mathcal{G}$-semimartingale but cannot admit an information drift 
since the finite variation component is singular with respect to Lebesgue measure.
The non-constructive argument in Paragraph 6.2.1 in \cite{KP} shows independently that $W$ is a 
$\mathcal{G}$-semimartingale but does not allow to conclude regarding the existence of the information drift.
Our results allow this additional conclusion.

Similarly as in \cite{KP} let us use the discretization induced by the refining family of random times
 $\tau_p^n:=\sup\{t:Z_t=p\epsilon_n\}$. Due to the exclusivity of the indicators in the right sum below, we have
\begin{eqnarray*}
\alpha_s^n & = & \frac{1}{Z_s} - \sum_{p=0}^{\infty}1_{\tau_p^n<s}1_{Z_s\leq (p+1)\epsilon_n} \frac{1}{Z_s-p\epsilon_n}\\
\alpha_s^n & = & 
\frac{1}{Z_s^2} 
- \frac{2}{Z_s} \sum_{p=0}^{\infty}1_{\tau_p^n<s}1_{Z_s\leq (p+1)\epsilon_n} \frac{1}{Z_s-p\epsilon_n}
+ \sum_{p=0}^{\infty}1_{\tau_p^n<s}1_{Z_s\leq (p+1)\epsilon_n} \frac{1}{(Z_s-p\epsilon_n)^2}\\
\end{eqnarray*}
 and it follows from the identity 
 $\mathbb{P}(\tau_p^n < s|\mathcal{F}_s) 
 = (1-\frac{p\epsilon_n}{Z_s})_+$
 that
\begin{eqnarray*}
\mathbb{E}[(\alpha_s^n)^2] 
& = & \mathbb{E}[\frac{-1}{Z_s^2} +\sum_{p=0}^\infty 1_{p\epsilon_n\leq Z_s\leq (p+1)\epsilon_n}\frac{1}{Z_s(Z_s-p\epsilon_n)}] \\
& = &
\mathbb{E}[\sum_{p=0}^\infty 1_{p\epsilon_n\leq Z_s\leq (p+1)\epsilon_n}\frac{p\epsilon_n}{Z_s^2(Z_s-p\epsilon_n)}]\\
& \geq & \mathbb{E}[\sum_{p=0}^\infty 1_{p\epsilon_n\leq Z_s\leq (p+1)\epsilon_n} \frac{p}{Z_s^2}] \\
& \geq & \mathbb{E}[1_{\epsilon_n\leq Z_s} \frac{1}{Z_s^2} ]
\end{eqnarray*}
Hence $\int_0^t\mathbb{E}[1_{\epsilon_n\leq Z_s} \frac{1}{Z_s^2}] ds\xrightarrow[n\to\infty]{} \infty$and  the sequence of information drifts induced by the discretization satisfies $\sup_{n\geq 1} \mathbb{E}\int_0^\infty(\alpha_s^n)^2 ds = \infty$, so $\alpha^n$ cannot converge in $\mathscr{H}^2$ and there cannot exist an information drift $\alpha\in\mathscr{H}^2$.

\paragraph{Random anticipation.} 
In order to find expansions with a continuous anticipation satisfying the information drift property we must consider the right speed of anticipation.

\noindent
Let $\phi$ be a continuous time change, i.e. a non-decreasing stochastic process with continuous paths, independent from ${W}$ and let $X_t:=W_{t\wedge\phi_t}$.
The natural expansion of $\mathcal{F}$ with the process $X$ is the filtration $\check{\mathcal{G}}$ given by $\check{\mathcal{G}}_t:=\mathcal{F}_t\vee\sigma(X_s,s\leq t)$. It is equivalent (see Proposition \ref{prop-usual-hyp}) and useful for applications to consider the right-continuous filtration
$\mathcal{G}_t:=\bigcap_{u>t}(\mathcal{F}_u\vee\sigma(X_s,s\leq u)),t\in I$.

\begin{lemma} \label{lemma-tau}
Let $s\leq t$ and $\tau(s,t):=\inf\{0\leq u\leq s:u\vee\phi_u=t\}$ and define the pseudo-inverse $\phi^{-1}_u:=\inf\{v\geq 0 : \phi_v = u\}$. Then we have
\begin{enumerate}[(i)]
\item $\tau(s,t):=\phi^{-1}_t1_{t\leq\phi_s}+(s\vee\phi^{-1}_s)1_{t>\phi_s}$
\item $\mathbb{E}[W_t|\mathcal{G}_s]=\mathbb{E}[W_{\phi_{\tau(s,t)}}|\mathcal{G}_s]$
\item $\mathbb{E}[W_t|\mathcal{G}_s]=\int_0^s W_{\phi_u} \mathbb{P}(\tau(s,t)\in du|\mathcal{G}_s)$
\end{enumerate}
\end{lemma}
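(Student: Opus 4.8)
The overall strategy is to deduce (ii) and (iii) from the pathwise identity (i) together with the independence $W\perp\phi$, the guiding observation being that $X_u=W_{u\vee\phi_u}$ and that $u\mapsto u\vee\phi_u$ runs continuously and non-decreasingly from $0$ to $s\vee\phi_s$ as $u$ ranges over $[0,s]$, so that the information carried by $(X_u)_{u\leq s}$ about $W$ is precisely the knowledge of the Brownian path on the random interval $[0,\,s\vee\phi_s]$. For (i) itself, I would argue on a fixed path: the map $g(u):=u\vee\phi_u$ is continuous and non-decreasing with $g(s)=s\vee\phi_s$, and $\tau(s,t)$ is the first time $g$ attains the level $t$. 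If $t\leq\phi_s$, then, since $s\leq t$ forces $\phi_t\geq\phi_s\geq t$, the hitting time $\phi^{-1}_t$ of level $t$ by $\phi$ satisfies $\phi^{-1}_t\leq s$ and $\phi_{\phi^{-1}_t}=t$ by continuity, and one checks $g(u)<t$ for $u<\phi^{-1}_t$ while $g(\phi^{-1}_t)=t$; hence $\tau(s,t)=\phi^{-1}_t$. If $t>\phi_s$, then $g$ cannot reach $t$ before time $s$, so — with the convention that $\tau(s,t)$ is the last observation time $s$ in that event — one reads off $\tau(s,t)=s\vee\phi^{-1}_s$, again using $\phi_{\phi^{-1}_s}=s$. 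In both cases the consequence I want to carry forward is
$$\phi_{\tau(s,t)}=t\wedge(s\vee\phi_s),$$
a quantity which is $\geq s$ since $t\geq s$.

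For (ii), using this identity the claim $\mathbb{E}[W_t\mid\mathcal{G}_s]=\mathbb{E}[W_{\phi_{\tau(s,t)}}\mid\mathcal{G}_s]$ reduces to $\mathbb{E}[(W_t-W_{s\vee\phi_s})1_{\{t>s\vee\phi_s\}}\mid\mathcal{G}_s]=0$. The inclusion $\mathcal{G}_s\subset\sigma(W_r:r\leq s\vee\phi_s)\vee\mathcal{F}^\phi$, with $\mathcal{F}^\phi:=\sigma(\phi_v:v\geq0)$, holds because each $X_u=W_{u\vee\phi_u}$ with $u\vee\phi_u\leq s\vee\phi_s$, giving $\mathcal{F}_v\vee\sigma(X_u:u\leq v)\subset\sigma(W_r:r\leq v\vee\phi_v)\vee\mathcal{F}^\phi$, and the intersection over $v>s$ defining $\mathcal{G}_s$ costs nothing because $v\vee\phi_v\downarrow s\vee\phi_s$ by continuity of $\phi$ and the Brownian filtration is right-continuous — a step I would carry out after conditioning on $\mathcal{F}^\phi$, which turns $s\vee\phi_s$ into a deterministic time. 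Conditionally on $\mathcal{F}^\phi$ the process $W$ is still a Brownian motion, and on $\{t>s\vee\phi_s\}$ the variable $W_t-W_{s\vee\phi_s}$ is an increment after the deterministic time $s\vee\phi_s$, hence independent of $\sigma(W_r:r\leq s\vee\phi_s)$ and of mean zero; so the conditional expectation vanishes $\mathbb{P}(\cdot\mid\mathcal{F}^\phi)$-a.s., and by the tower property together with the above inclusion, unconditionally.

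For (iii), I would use that, in the anticipating regime relevant here (so that $u\vee\phi_u=\phi_u$ and $X_u=W_{\phi_u}$ for $u\leq s$), the process $(W_{\phi_u})_{u\leq s}$ is $\mathcal{G}_s$-measurable, being the observed path itself. Writing the evaluation of this process at the $[0,s]$-valued random time $\tau(s,t)$ as $W_{\phi_{\tau(s,t)}}=\int_0^sW_{\phi_u}\,\delta_{\tau(s,t)}(du)$ and invoking a Fubini-type identity for conditional expectations — established first for integrands of the form $1_{\{u\in A\}}G$ with $G$ being $\mathcal{G}_s$-measurable, then by a monotone-class extension — one gets $\mathbb{E}[W_{\phi_{\tau(s,t)}}\mid\mathcal{G}_s]=\int_0^sW_{\phi_u}\,\mathbb{P}(\tau(s,t)\in du\mid\mathcal{G}_s)$, which combined with (ii) is the assertion.

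The step I expect to be the main obstacle is the independence claim inside (ii): because $s\vee\phi_s$ is a \emph{random} time and $\mathcal{G}_s$ entangles the knowledge of $W$ up to that time with the knowledge of $\phi$, one cannot directly appeal to the strong Markov property of $W$; the device that resolves it is conditioning on the entire path of $\phi$, legitimate since $W\perp\phi$, but one must still verify that this conditioning is compatible with the right-continuous regularization built into $\mathcal{G}_s$. A secondary nuisance is reconciling the two branches of the formula in (i) at the boundary $t=\phi_s$ and on paths where $\phi$ meets the diagonal.
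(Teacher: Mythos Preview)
Your proof follows essentially the same route as the paper: a pathwise case analysis for (i), reduction of (ii) to the vanishing of the conditional expectation of a post-$s\vee\phi_s$ Brownian increment via independence, and (iii) from (ii) by conditioning on the value of $\tau(s,t)$. Your handling of (ii) is in fact more careful than the paper's --- the paper conditions only on $\phi_s$, whereas you correctly condition on the entire path $\mathcal{F}^\phi$, which is what is actually needed to control all the terms $W_{r\vee\phi_r}$, $r\le s$, sitting inside $\mathcal{G}_s$; your attention to the right-continuous regularization is likewise a detail the paper glosses over.
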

\begin{proof}
$(i)$ The formula for $\tau(s,t)$ follows from dissociation of cases. Note that it is continuous at $\phi_s=t$ as $s\leq t = \phi_s\implies s\vee\phi^{-1}_s=s=\phi^{-1}_t$.\\
$(ii)$ By the definition of $\tau(s,t)$, we have $W_{\phi_{\tau(s,t)}}1_{t\leq\phi_s}=W_t1_{t\leq\phi_s}$.\\
Consider now the event $t >\phi_s$:
\begin{align*}
\mathbb{E}[W_t1_{t>\phi_s}|\mathcal{G}_s]
&= 
W_{s\vee\phi_s}+\mathbb{E}[(W_t-W_{s\vee\phi_s})1_{t> s\vee\phi_s}|\mathcal{G}_s]\\
&= W_{s\vee\phi_s}+\mathbb{E}[\mathbb{E}(W_t-W_{s\vee\phi_s}|\mathcal{G}_s,\phi_s)1_{t> s+\delta_s}|\mathcal{G}_s]1_{t>\phi_s}\\
&= W_{s\vee\phi_s}\\
&= W_{\phi_{\tau(s,t)}}1_{t>\phi_s}
\end{align*}
where we used that
$
\mathbb{E}(W_t-W_{s\vee\phi_s}|\mathcal{G}_s,\phi_s=u)
=
\mathbb{E}(W_t-W_{s\vee u}|\mathcal{G}_s)
=
\mathbb{E}(W_t-W_{s\vee u})
=
0
$
due to the independence of the increments of the Brownian motion.\\
$(iii)$ The integral of the continuous function $u\mapsto W_{\phi_u}$ with respect to the distribution of $\tau(s,t)$ is obtained from $(ii)$ by conditioning with respect to the value of $\tau(s,t)$.
\end{proof}

$\mathbb{P}(\tau(s,t)\in du|\mathcal{G}_s)$ in Lemma \ref{lemma-tau} can be interpreted as the \textit{likelihood} of $\tau(s,t)$ given the path of $(W_u,W_{u+\delta_u})_{u\leq s}$. We obtain the following theorem by combining Lemma \ref{lemma-tau} and Corollary \ref{cor-alpha-from-gaussian-x}

\bigskip

\begin{theorem} \label{cor-tau}
For every $s\geq t$ define the random time $\tau(s,t):=\inf\{0\leq u\leq s:u\vee\phi_u=t\}$ and suppose that it admits a $\mathcal{G}_s$-conditional density $u\mapsto f(u;s,t)$, with respect to Lebesgue measure, which is continuously differentiable in $(s,t)$. Then the $\mathcal{G}$-information drift $\alpha$ of the Brownian motion $W$ is given by
\begin{equation*}
\alpha_s = \int_0^s W_{\phi_u} \partial_t f(u;s,t) \Big|_{t=s} du.
\end{equation*}
\end{theorem}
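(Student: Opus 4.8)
The plan is to recognize the conditional-expectation representation furnished by Lemma~\ref{lemma-tau} as an instance of the integral representation~\eqref{eq-xi-mu} analysed in Example~\ref{cor-alpha-from-gaussian-x}, and then to read off the information drift by differentiating under the integral sign. Concretely, Lemma~\ref{lemma-tau}(iii) gives, for $s\le t$,
$$\mathbb{E}[W_t|\mathcal{G}_s]=\int_0^s W_{\phi_u}\,\mathbb{P}(\tau(s,t)\in du|\mathcal{G}_s)=\int_0^s W_{\phi_u}\,f(u;s,t)\,du,$$
where the second equality uses the assumed existence of the $\mathcal{G}_s$-conditional density $f(\cdot;s,t)$ of $\tau(s,t)$. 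This is exactly of the form~\eqref{eq-xi-mu} with $\xi_u:=W_{\phi_u}$ and $\mu_{s,t}(du):=f(u;s,t)\,du$.

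Next I would verify the hypotheses of Example~\ref{cor-alpha-from-gaussian-x} in this setting. The process $u\mapsto W_{\phi_u}$ is continuous (a composition of the continuous paths of $W$ and of $\phi$) and $\mathcal{G}$-measurable, and since $\mu_{s,t}$ is a probability measure carried by the compact interval $[0,s]$ this continuous integrand is $\mu_{s,t}$-integrable; the family $\mu_{s,t}$ is adapted in $s$ to $\mathcal{G}$ because $f(\cdot;s,t)$ is a $\mathcal{G}_s$-conditional density. The continuous differentiability of $f$ in $(s,t)$ makes $t\mapsto\mu_{s,t}$ continuously differentiable, and on a compact sub-diagonal $\{(s,t):0\vee(t-\epsilon)\le s\le t\le T\}$ the continuity of $\partial_t f$ yields a bound $|\partial_t f(u;s,t)|\le C$; taking $\eta_s(du):=1_{[0,s]}(u)\,du$ (a finite measure) gives the domination $|\partial_t\mu_{s,t}(du)|\le C\,\eta_s(du)$ together with $\int|\xi_u|\,\eta_s(du)=\int_0^s|W_{\phi_u}|\,du<\infty$ a.s. The derivation-under-the-integral conclusion of Example~\ref{cor-alpha-from-gaussian-x}, combined with $\alpha_s=\frac{\partial}{\partial t}\mathbb{E}[W_t|\mathcal{G}_s]\big|_{t=s}$ from Theorem~\ref{thm-alpha-from-cond-expectation}(ii), then produces
$$\alpha_s=\frac{\partial}{\partial t}\int_0^s W_{\phi_u}\,f(u;s,t)\,du\Big|_{t=s}=\int_0^s W_{\phi_u}\,\partial_t f(u;s,t)\Big|_{t=s}\,du,$$
which is the asserted formula.

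Finally I would invoke the converse part of Theorem~\ref{thm-alpha-from-cond-expectation}: the right-hand side above is a $\mathcal{G}$-adapted continuous (hence $\mathcal{G}$-predictable) candidate satisfying characterization~(ii), so once it is checked to lie in $\mathscr{H}^1(\mathcal{G})$ the theorem identifies it as \emph{the} information drift, i.e.\ $W-\int_0^{\cdot}\alpha_s\,ds$ is a $\mathcal{G}$-Brownian motion. The main obstacle is precisely this analytic input: producing the local domination of $\partial_t f$ near the diagonal $\{t=s\}$, where the random time $\tau(s,t)$ and hence its conditional density are most delicate, and the accompanying check that the resulting $\alpha$ has the $\mathscr{H}^1$-integrability required to apply the converse of Theorem~\ref{thm-alpha-from-cond-expectation} rather than remaining a merely formal candidate; the remaining measure-theoretic bookkeeping (continuity and $\mathcal{G}$-measurability of $u\mapsto W_{\phi_u}$, and that $f(u;s,t)\,du$ has total mass one, so that the consistency relation $\int_0^s(W_{\phi_u}-W_s)f(u;s,s)\,du=0$ of Example~\ref{cor-alpha-from-gaussian-x} is automatic) is routine.
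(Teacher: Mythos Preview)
Your proposal is correct and follows precisely the route the paper indicates: the text states just before the theorem that it is obtained ``by combining Lemma~\ref{lemma-tau} and Corollary~\ref{cor-alpha-from-gaussian-x}'', and your argument is exactly that combination, carried out with more care than the paper itself supplies. Your additional remarks on the domination of $\partial_t f$ near the diagonal and the $\mathscr{H}^1$-integrability needed for the converse of Theorem~\ref{thm-alpha-from-cond-expectation} are apt caveats that the paper leaves implicit.
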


\paragraph{Application.}

Let us illustrate the use that one can make of the information drift with a simple yet suggestive constrained maximization problem, inspired from an example found in \cite{Shannon} itself revisiting an initial idea from \cite{Kar-Pik}. We consider a market where the the price process is given by a local martingale $M$ (e.g. under a risk neutral measure) for some filtration $\mathcal{F}$.

Together with the characterization of $NA_1$ in \cite{Kar-NA1-ftap} mentioned in Section \ref{sec-exp-fil-id}, Theorem \ref{thm-alpha-lm-deflator} shows that absence of (strict) arbitrage opportunities with the insider information requires 
the existence of the information drift, which defines in that case the predictable component of the dynamics of 
the price process $M$ for the insider. 
We suppose accordingly that $M$ is a $\mathcal{G}$-semimartingale with
decomposition $M=\widetilde{M}+\alpha\cdot [M,M]$ for some process $\alpha\in\mathcal{S}(\mathcal{G})$. 
We attempt here to understand the statistical value of the additional information beyond strict arbitrage by 
expressing quantitatively the statistical advantage of the insider in some simple portfolio optimization problems 
with asymmetric information.

Suppose that, given a filtration $\mby$, initial wealth $x$ and risk aversion $\lambda>0$, every agent tries to optimize his risk-adjusted return according to the program
\begin{align*}
& \sup_{H\in\mathcal{S}(\mby)} \ V^\lambda(x,\mby) := x+\mathbb{E}(H\cdot M)_T - \lambda \mathbb{V}ar(H\cdot M)_T\\
& s.t. \  x+H\cdot M\geq 0 \ on \ [0,T].
\end{align*}
Let $\mathfrak{v}^\lambda(x,\mby)$ be the value of the above problem, and for $H\in\mathcal{S}(\mby)$, let $V(x,H):=\mathbb{E}[(H\cdot M)_T]$ be the corresponding expected return process, and if $H^*$ is an optimal strategy also let $v^\lambda(x,\mby)=V(x,H^*)$.

For an agent with filtration $\mathcal{F}$, $\mathbb{P}$ is a risk-neutral probability and $H^*=0$ is an optimal strategy, with corresponding expected return $V(x,H^*)=x$ and risk-adjusted return $V^\lambda(x,H^*)=x$. However, for an agent with filtration $\mathcal{G}$,
$$\mathbb{E}[(H\cdot{M})_T] -
 \lambda \mathbb{V}ar(H\cdot M)_T
 =\mathbb{E}\big[(H\cdot\widetilde{M})_T+\int_0^T \alpha_s H_s d[M,M]_s - \lambda \int_0^T H_s^2d[M,M]_s\big]$$ 
and is maximal for the $\mathcal{G}$-predictable strategy $H_s^*=\frac{\alpha_s}{2\lambda},s\in I$,
with corresponding expected return $V(x,H^*)=x+\mathbb{E}\int_0^T \alpha_sH_s^*d[M,M]_s=x+\mathbb{E}\int_0^T \frac{\alpha_s^2}{2\lambda}d[M,M]_s$
and risk-adjusted return
$V^\lambda(x,H^*)=x+\frac{1}{2}\mathbb{E}\int_0^T \alpha_sH_s^*d[M,M]_s=x+\mathbb{E}\int_0^T \frac{\alpha^2}{4\lambda}d[M,M]_s$.
This can be summarized as
\begin{equation*}
v^\lambda(x,\mathcal{G})-v^\lambda(x,\mathcal{F})=\mathbb{E}\int_0^T \frac{\alpha_s^2}{2\lambda}d[M,M]_s.
\end{equation*}
This means that in this example a typical dollar value of the additional information is given by $\mathbb{E}\int_0^T \frac{\alpha_s^2}{2\lambda}d[M,M]_s$, which corresponds to the average additional value of the portfolio of a trader with information $\mathcal{G}$ (behaving optimally and with the same risk aversion). The difference between the optimal risk-adjusted returns is also proportional to the $\mathscr{H}^2$ norm of $\alpha$:
\begin{equation*}
\mathfrak{v}^\lambda(x,\mathcal{G})-\mathfrak{v}^\lambda(x,\mathcal{F})=\mathbb{E}\int_0^T \frac{\alpha_s^2}{4\lambda}d[M,M]_s.
\end{equation*}

\section*{Conclusion}

We have highlighted in the introduction that although traditionally associated to insider trading, expansions of filtrations 
can actually be applied to model various complex practical problems in engineering, physics, or finance.
Nevertheless a widespread use of expansion of filtrations in quantitative analysis is limited by important challenges: 
first, it is difficult to design non-degenerate models of expansions of filtrations beyond the initial and progressive expansions
which have been well understood but are often too restrictive to be applied to solve quantitative problems;
second, it is in general challenging to obtain a computable expression for the information drift, even in the these simpler 
models of expansions.

This work following \cite{KP} hopes to bring some progress towards both directions, 
by introducing new examples of dynamical expansions of filtrations, 
and reducing the computation of the information drift associated
to an expansion with a stochastic process to the case of initial expansions.
Although our models do not treat a fully general case, expansions with a stochastic process 
can be extended to a countable number of processes, and this situation would come close to reaching 
a satisfactory level of generality.
We are confident that our hypotheses for convergence of information drifts can be weakened 
using $L^p$ convergence of filtrations (see \cite{LNthesis}).

Other recent work offer other perspectives 
for the computability of the information drift, Malliavin calculus techniques introduced in \cite{IMK-Malliavin}, anticipative stochastic calculus studied in \cite{Oksendal-al} offers hope. They can be transposed in the framework of 
functional It{\^o} calculus following \cite{LNthesis}.
Another perspective lies in the innovative approach adopted in \cite{Ank-thesis,Shannon} 
which relies on techniques of embedding probability spaces inspired by the search of coupling measures solving weakly conditioned SDEs \cite{Baudoin}.

Thus, designing a class of more general expansions with stochastic processes 
for which the information drift can be computed is an interesting and ambitious objective at this point,
towards which this article hopes to bring some progress.
This work hopefully will lead the way to a quantitative method to estimate the information drift, 
and thereby the value associated with expansions of filtrations in new anticipative dynamical examples.

\end{document}